\documentclass[letterpaper, 12 pt]{article}
\usepackage{amsthm, amsmath, amssymb, amsfonts}
\usepackage{xy}\xyoption{all}

\newcommand{\Flag}{\mathcal{F}\ell}
\newcommand{\Elts}{\mathbb{E}} 
\newcommand{\ZZ}{\mathbb{Z}}
\newcommand{\QQ}{\mathbb{Q}}
\newcommand{\RR}{\mathbb{R}}
\newcommand{\CC}{\mathbb{C}}
\newcommand{\PP}{\mathbb{P}}
\renewcommand{\AA}{\mathbb{A}}
\renewcommand{\P}{\PP}
\newcommand{\Pow}{\mathcal{P}}

\newcommand{\Poly}{\mathtt{Poly}}
\newcommand{\Cone}{\mathtt{Cone}}
\DeclareMathOperator{\hilb}{\mathtt{hilb}}
\newcommand{\Sym}{\mathrm{Sym}}

\newcommand{\into}{\hookrightarrow}

\newcommand{\isom}{\cong}
\newcommand{\bw}{{\textstyle\bigwedge}} 

\DeclareMathOperator{\Span}{Span}

\DeclareMathOperator{\conv}{conv}

\newcommand{\Char}{\mathrm{Char}}
\DeclareMathOperator{\Frac}{\mathrm{Frac}}

\newcommand{\pt}{\mathtt{pt}}
\newcommand{\KTP}{K^0_T}
\newcommand{\eqv}[1]{#1^T} 

\newcommand{\Mat}{\mathrm{Mat}}

\newcommand{\GL}{\mathrm{GL}}

\DeclareMathOperator{\Spec}{Spec}

\newcommand{\Hom}{\mathrm{Hom}}

\newcommand{\Fl}{\Flag}

\newcommand{\OO}{\mathcal{O}}
\renewcommand{\O}{\OO}
\newcommand{\One}{\boldsymbol{1}}

\newcommand{\zzeta}{\boldsymbol{\zeta}}

\newcommand{\SM}{\mathrm{SM}}
\newcommand{\IndMat}{\mathcal I}

\newtheorem{conj}{Conjecture}[section]
\newtheorem{theorem}[conj]{Theorem}
\newtheorem{prop}[conj]{Proposition}
\newtheorem{lemma}[conj]{Lemma}
\newtheorem{cor}[conj]{Corollary}
\newtheorem{question}[conj]{Question}

\newtheorem*{theoremtutte}{Theorem \ref{TutteTheoremProved}}
\newtheorem*{theoremh}{Theorems \ref{DiagonalPolynomial}, \ref{OldNew}}

\theoremstyle{definition}

\newtheorem{remark}[conj]{Remark}
\newtheorem{example}[conj]{Example}
\newtheorem{Condition}[conj]{Condition}

\newcommand{\newword}[1]{\textbf{\emph{#1}}}

\begin{document}

\title{$K$-classes for matroids and equivariant localization}

\author{Alex Fink \\ David E Speyer}

\date{}

\maketitle

\begin{abstract}
To every matroid, we associate a class in the $K$-theory of the Grassmannian.
We study this class using the method of equivariant localization.
In particular, we provide a geometric interpretation of the Tutte polynomial.
We also extend results of the second author concerning the behavior of such classes under direct sum, series and parallel connection and two-sum;
these results were previously only established for realizable matroids, and their earlier proofs were more difficult.
\end{abstract}

\section{Introduction}\label{sec:intro}

Let $H_1$, $H_2$, \dots, $H_n$ be a collection of hyperplanes through the origin in $\CC^d$. 
The study of such hyperplane arrangements is a major field of research, resting on the border between algebraic geometry and combinatorics. 
There are two natural objects associated to a hyperplane arrangement.
We will describe both of these constructions in detail in Section~\ref{sec:MatsGrs}.

The first is the \newword{matroid} of the hyperplane arrangement, which can be thought of as encoding the combinatorial structure of the arrangement. 

The second, which captures the geometric structure of the arrangement, is a point in the Grassmannian $G(d,n)$.
There is ambiguity in the choice of this point; it is only determined up to
the action of an $n$-dimensional torus on $G(d,n)$.  
So more precisely, to any hyperplane arrangement, we associate an orbit in $G(d,n)$ for this torus action.
It is technically more convenient to work with the closure of this orbit.
In~\cite{KT1}, the second author suggested that the $K$-class of this orbit could give rise to useful invariants of matroids, thus exploiting the geometric structure to study the combinatorial one.
In this paper, we continue that project.

One of our results is a formula for the Tutte polynomial, 
the most famous of matroid invariants, in terms of the $K$-class of $Y$. 
In addition, we continue the project which was begun in the appendix of~\cite{KT1}, rewriting all of the $K$-theoretic definitions in terms of moment graphs.  
This makes our theory purely combinatorial and in principle completely computable. 
Many results which were shown for realizable matroids in~\cite{KT1} are now extended to all matroids.

We state our two main results.
The necessary $K$-theoretic definitions will be given in the following section.
Given integers $0<d_1<\cdots<d_s<n$, let $\Fl(d_1,\ldots,d_s; n)$ be the partial flag manifold of flags of dimensions $(d_1,\ldots,d_s)$.  For instance, $\Fl(d;n)=G(d,n)$. 
Note that $\Fl(1,n-1;n)$ embeds as a hypersurface in $\PP^{n-1} \times \PP^{n-1}$, regarded as
the space of pairs $(\mathtt{line}, \mathtt{hyperplane})$ in $n$-space.

We will be particularly concerned with the maps in diagram~(\ref{diag1}):
\begin{equation}\label{diag1}\xymatrix{
 & \Fl(1,d,n-1; n)\ar[dl]_{\pi_d}\ar[dr] \ar[ddr]_{\pi_{1(n-1)}} & \\
G(d,n) & & \Fl(1,n-1;n)\ar@{^(->}[d] \\
 & & \PP^{n-1} \times \PP^{n-1}
}\end{equation}
Here the maps $\Fl(1,d,n-1;n) \to \Fl(1,n-1;n)$ and $\Fl(1,d,n-1;n) \to G(d,n)$ are given by respectively forgetting the $d$-plane and forgetting the $1$ and $(n-1)$-planes.
The map $\pi_{1(n-1)}$ is defined by the composition $\Fl(1,d,n-1;n) \to \Fl(1,n-1;n) \to \PP^{n-1} \times \PP^{n-1}$.


Let $T$ be the torus $(\CC^*)^n$, which acts on the spaces in~(\ref{diag1}) in an obvious way. 
Let $x$ be a point of $G(d,n)$, $M$ the corresponding matroid, and $\overline{Tx}$ the closure of the $T$ orbit through $x$. 
Let $Y$ be the class of the structure sheaf of $\overline{Tx}$ in $K^0(G(d,n))$. 
Write $K^0(\PP^{n-1} \times \PP^{n-1}) = \QQ[\alpha, \beta]/( \alpha^n, \beta^n )$, 
where $\alpha$ and $\beta$ are the structure sheaves of hyperplanes.

We can now explain the geometric origin of the Tutte polynomial. 

\begin{theoremtutte}
With the above notations,
$$(\pi_{1 (n-1)})_* \pi_d^* \left(Y \cdot  [\mathcal{O}(1)]  \right) = t_M(\alpha, \beta)$$
where $t_M$ is the Tutte polynomial.
\end{theoremtutte}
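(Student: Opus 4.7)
The strategy is to lift both sides of the identity to $T$-equivariant $K$-theory of $\PP^{n-1} \times \PP^{n-1}$, where $T = (\CC^*)^n$, and then apply equivariant localization. The $T$-fixed points of the target are the pairs $(e_p, H_q)$ with $e_p \in \PP^{n-1}$ a coordinate point and $H_q = e_{[n]\setminus\{q\}}$ a coordinate hyperplane, for $p, q \in [n]$ (the image of $\Fl(1,n-1;n)$ further restricts us to $p \ne q$). By localization, it suffices to verify that the restriction of each side to every such fixed point agrees, since the inclusion of $T$-fixed locus into $\PP^{n-1} \times \PP^{n-1}$ induces an injection on equivariant $K$-theory.

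For the left side, the preimage in $\Fl(1,d,n-1;n)$ under $\pi_{1(n-1)}$ of $(e_p, H_q)$ consists of the $T$-fixed flags $(e_p \subset e_S \subset H_q)$ with $S$ a $d$-subset of $[n]$ containing $p$ and contained in $[n]\setminus\{q\}$. The Atiyah--Bott--Lefschetz push-forward formula then yields
$$(\pi_{1(n-1)})_*\,\pi_d^*(Y \cdot [\OO(1)])\Big|_{(e_p, H_q)} = \sum_{S} \frac{\,\pi_d^*(Y \cdot [\OO(1)])\big|_{(e_p,e_S,H_q)}\,}{\prod (\text{weights of } N_{\pi_{1(n-1)}})},$$
summed over admissible $S$. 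We then invoke the explicit moment-graph description of $Y|_{e_S}$ developed earlier in the paper: this restriction vanishes unless $S$ is a basis of $M$, and for a basis it is a specific Laurent expression in the characters $T_1, \ldots, T_n$. Combining this with the known restriction of $[\OO(1)]$ to $e_S$ (a monomial in the $T_i$ for $i \in S$) and with the equivariant weights of the relative normal bundle (products of terms of the form $1 - T_i/T_j$) reduces the left side at $(e_p, H_q)$ to an explicit combinatorial sum over those bases $S$ of $M$ that contain $p$ and avoid $q$.

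The final step is to recognize this sum as the localization at $(e_p, H_q)$ of $t_M(\alpha, \beta)$, where $\alpha$ and $\beta$ are the equivariant hyperplane classes. Expanding the equivariant Tutte polynomial via the corank--nullity formula $t_M(\alpha,\beta) = \sum_{A \subseteq [n]} (\alpha-1)^{r(M)-r(A)}(\beta-1)^{|A|-r(A)}$ and substituting the fixed-point values of $\alpha, \beta$ yields an expression of exactly the same shape as the localization sum on the left. I expect the main obstacle to be precisely the matching of these two expansions: organizing the $(1 - T_i/T_j)$ denominators so that they telescope (or so that, grouped across bases, they collapse into a basis-activities expansion of $t_M$) requires careful bookkeeping with the moment-graph data. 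A secondary subtlety is that the non-equivariant specialization $T_i \to 1$ makes individual terms singular; one must combine the rational contributions over all bases before taking the limit, or equivalently work throughout in the moment-graph formalism already established in the paper, where these combinations are manifestly polynomial.
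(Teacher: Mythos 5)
Your plan is to compare the two sides by equivariant localization on $\PP^{n-1}\times\PP^{n-1}$, but this runs into a problem that the paper explicitly warns about (see the remark following Lemma~\ref{pushtoproj}): there is no canonical $T$-equivariant lift of the class $\alpha$ (or $\beta$), since equivariantly one must choose a \emph{coordinate} hyperplane, and different choices give different classes in $K^0_T(\PP^{n-1}\times\PP^{n-1})$. Consequently ``the localization at $(e_p,H_q)$ of $t_M(\alpha,\beta)$'' is not well-defined, and the identity you want to check fixed-point-by-fixed-point does not exist at the equivariant level. The equality $(\pi_{1(n-1)})_*\pi_d^*(Y\cdot[\OO(1)])=t_M(\alpha,\beta)$ is genuinely a non-equivariant statement, so before you can compare anything to a polynomial in $\alpha,\beta$ you must specialize $t_i\to1$, and then the individual fixed-point contributions become singular, as you note at the end.

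The paper sidesteps both issues simultaneously by first proving Lemma~\ref{pushtoproj}, which converts the non-equivariant pushforward computation into a scalar: the coefficient of $u^pv^q$ in $R(u,v)$ is $\int_{G(d,n)} [E]\,[\bw^p S]\,[\bw^q Q^\vee]$, where the integral lands in $\ZZ$ (equivariantly, in $\KTP$). Applying equivariant localization to \emph{this} integral --- over $G(d,n)$, where $y(M)$ lives and where its localization $\hilb(\Cone_I(M))$ is already computed --- yields a sum of rational functions whose total is a Laurent polynomial, so there is no singular specialization to worry about. The coefficients are then extracted using Corollary~\ref{ConvexHull} and the flipping-cones machinery of Section~\ref{sec:flipping}: a suitable $\zzeta$ isolates a single basis $I_0$ minimizing $\zeta_1$ on $\Poly(M)$, and the coefficient of $t^{e_S}$ comes out to $u^{d-\rho_M(S)}v^{|S|-\rho_M(S)}$. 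This gives the stronger Theorem~\ref{thm:RankCorankGF} (a $\KTP$-weighted rank generating function), and the Tutte statement is the $t_i\to1$ shadow of it. Your proposal omits all of this cone-extraction work, which is the hard content; without it, or an equivalent device to control the singular limits, there is no proof. If you want to rescue the localization-on-$\PP^{n-1}\times\PP^{n-1}$ strategy, the cleanest way is to derive an analogue of Lemma~\ref{pushtoproj} first, at which point the remaining work is essentially the paper's argument.
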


The constant term of $t_M$ is zero; this corresponds to the fact that $\pi_{1 (n-1)}$
is not surjective onto $\PP^{n-1}\times\PP^{n-1}$ but, rather, has image lying in \mbox{$\Fl(1,n-1;n)$}. 
The linear term of Tutte, $\beta(M) (\alpha+ \beta)$, corresponds to the fact that the map $\pi_d^{-1}(\overline{Tx}) \to \Fl(1,n-1;n)$ is finite of degree $\beta(M)$.

\begin{theoremh}
Also with the above notations,
$$(\pi_{1 (n-1)})_* \pi_d^* \left( Y \right) = h_M(\alpha+\beta-\alpha \beta)$$
where $h_M$ is the polynomial from~\cite{KT1}.
\end{theoremh}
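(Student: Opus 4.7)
The plan is to run the equivariant-localization machinery used earlier in the paper (and in particular in the proof of Theorem~\ref{TutteTheoremProved}) with the omission of the $[\OO(1)]$ twist. Throughout I would work in the $T$-equivariant $K$-theory of the spaces in diagram~(\ref{diag1}), which is determined by restriction to $T$-fixed points via standard localization. Let $\eqv{Y} \in \KTP(G(d,n))$ denote the equivariant lift of $Y$. Using the moment-graph description developed earlier in the paper, the restriction $\eqv{Y}|_{e_B}$ at the fixed point $e_B$ indexed by a $d$-subset $B \subset [n]$ vanishes unless $B$ is a basis of $M$, and otherwise is an explicit Laurent polynomial in the characters $T_1,\ldots,T_n$.

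To compute the left-hand side, I would first pull back to $\Fl(1,d,n-1;n)$, whose $T$-fixed points are the triples $(\{i\},B,[n]\setminus\{j\})$ with $i \in B$ and $j \notin B$, and at which $\pi_d^*\eqv{Y}$ simply restricts to $\eqv{Y}|_{e_B}$. Then I would compute $(\pi_{1(n-1)})_*$ via the $K$-theoretic Atiyah--Bott--Berline--Vergne localization formula: its restriction to a fixed point $(\{i\},[n]\setminus\{j\}) \in \PP^{n-1}\times\PP^{n-1}$ is a sum over the bases $B \ni i$ with $j \notin B$ of $\eqv{Y}|_{e_B}$, each divided by the equivariant $K$-theoretic Euler class of the relative cotangent bundle along the fiber.

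To match with the right-hand side, I would use the identification $\alpha+\beta-\alpha\beta = 1-[\OO(-1,-1)]$ in $K^0(\PP^{n-1}\times\PP^{n-1})$, coming from the short exact sequence $0 \to \OO(-1) \to \OO \to \OO_H \to 0$ in each factor. This is the class of the divisor $\Fl(1,n-1;n) \subset \PP^{n-1}\times\PP^{n-1}$ cut out by a section of $\OO(1,1)$, which contains the image of $\pi_{1(n-1)}$ and explains the absence of the constant term on the right-hand side. I would lift $h_M(\alpha+\beta-\alpha\beta)$ to an equivariant class on $\PP^{n-1}\times\PP^{n-1}$ in the canonical way, expand using the combinatorial definition of $h_M$ from~\cite{KT1}, and compare the equivariant restrictions of both sides at every fixed point $(\{i\},[n]\setminus\{j\})$.

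The main obstacle is the combinatorial identity required at this matching step. A priori the equivariant localization produces rational expressions in $T_1,\ldots,T_n$, and one must verify that these sum to a polynomial expression of the specific form $h_M(\alpha+\beta-\alpha\beta)$ at each fixed point. Since the analogous identity for the Tutte polynomial has already been proved in Theorem~\ref{TutteTheoremProved}, I expect most of the combinatorial bookkeeping to be reusable: the essential difference is the removal of the single character-dependent factor $[\OO(1)]|_{e_B}$ at each basis, and tracking its absence carefully through the localization sum should yield the simpler polynomial $h_M(\alpha+\beta-\alpha\beta)$ in place of $t_M(\alpha,\beta)$.
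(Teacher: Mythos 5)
Your overall plan—localize, remove the $[\OO(1)]$ twist, and compare fixed-point restrictions—misses the actual structure of the paper's argument and contains a step that cannot be carried out as stated.

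The first issue is the proposed equivariant lift of the right-hand side. You say you would ``lift $h_M(\alpha+\beta-\alpha\beta)$ to an equivariant class on $\PP^{n-1}\times\PP^{n-1}$ in the canonical way'' and then compare restrictions at fixed points $(\{i\},[n]\setminus\{j\})$. There is no canonical equivariant lift of $\alpha$ or $\beta$: equivariantly, a hyperplane class exists only for a coordinate hyperplane, and the class depends on which coordinate hyperplane is chosen. The paper flags this precisely in the remark following Lemma~\ref{pushtoproj}: ``We do not have an equivariant generalization of Lemma~\ref{pushtoproj}.'' Your approach would require such a generalization. The paper avoids this by using Lemma~\ref{pushtoproj} to trade the pushforward to $\PP^{n-1}\times\PP^{n-1}$ for an integral over $G(d,n)$ of $y(M)\sum[\bw^pS][\bw^q(Q^\vee)]u^pv^q$, where equivariant localization \emph{is} available, and only descending to non-equivariant $K$-theory at the end.

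The second issue is that you have not articulated the two-step shape of the result, which is where the real work happens. The paper first proves the structural fact (Theorem~\ref{DiagonalPolynomial}) that $(\pi_{1(n-1)})_*\pi_d^*\,y(M)$ is a polynomial $H_M$ in the single variable $\alpha+\beta-\alpha\beta$. This is established not by computing fixed-point restrictions on the target, but by Lemma~\ref{lem:Zcoeffs}: after the Lemma~\ref{pushtoproj} reduction, one must show $\int^T y(M)\eqv{[\bw^pS]}\eqv{[\bw^q(Q^\vee)]}$ lies in $\ZZ$ and vanishes when $p\neq q$. The proof of this is the cone-flipping argument of Section~\ref{sec:flipping}, and it is exactly here that the hypothesis that $M$ is loop- and coloop-free enters (via Lemma~\ref{ZetaVanishing}, to force all monomials other than $t^0$ to cancel). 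Your write-up never mentions this hypothesis and has no mechanism that uses it, which is a sign that the argument as sketched cannot be made to close. Only once $H_M$ exists does the paper, in Theorem~\ref{OldNew}, identify $H_M$ with $h_M$, and this identification is not a fixed-point comparison at all: it goes through the geometric definition of $h_M$ in~\cite{KT1} as $\int_{G(d,n)}y(M)\bigl([\OO_{\Omega_i}]-[\OO_{\Omega_{i-1}}]\bigr)$, matching coefficients by computing $(\pi_d)_*\pi_{1(n-1)}^*(\alpha^{n-1}\beta^{n-1-i})$ as a Schubert structure sheaf. Your proposal treats $h_M$ as something combinatorially given at fixed points, but its definition in~\cite{KT1} is via Schubert variety integrals on $G(d,n)$; without engaging with that, there is nothing to compare against.
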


Our results can be pleasingly presented in terms of $\alpha-1$ and~$\beta-1$.  
For instance, in Theorem~\ref{DiagonalPolynomial},
$h_M$ is a polynomial in $1-(\alpha+\beta-\alpha\beta)=(\alpha-1)(\beta-1)$,
and Theorem~\ref{TutteTheoremProved} obtains
the rank generating function of~$M$
in the variables $\alpha-1$, $\beta-1$.  In other words, 
we might take as a generating set for $K^0(\PP^{n-1}\times\PP^{n-1})$ not the structure sheaves
of linear spaces $\{\alpha^p\beta^q\}$, but the line bundles $\OO(-p,-q)=\OO(-1,0)^p\;\OO(0,-1)^q$.
We have a short exact sequence 
\begin{equation}\label{eq:OOH}
0 \to \O(-1,0) \to \O \to \O_H\to 0,
\end{equation}
where $H$ is any hyperplane $\PP^{n-2} \times \PP^{n-1}\subseteq\PP^{n-1} \times \PP^{n-1}$,
so $[\OO(-1,0)]=1-\alpha$, and similarly $[\OO(0,-1)]=1-\beta$. 

This paper begins by introducing the limited subset of $K$-theory which we need, 
with particular attention to the method of \newword{equivariant localization}.
Many of our proofs, including those of Theorems~\ref{TutteTheoremProved}
and Theorems~\ref{OldNew} above, rely heavily on equivariant $K$-theory,
even though they are theorems about ordinary $K$-theory.
The end of Section~\ref{sec:Kbackground} describes the $K$-theory of the Grassmannian from the equivariant perspective, 
and Section~\ref{sec:MatsGrs} describes the $K$-theory classes associated to matroids.  
We have also written a quick guide to $K$-theory for combinatorialists, in section~\ref{CheatCard}, which focuses on how to do computations rather than on covering precise definitions.
Non-combinatorialists may also find this useful!


Any function on matroids arising from $K^0(G(d,n))$
is a \newword{valuation}.  This is the subject of Section~\ref{sec:valuations},
where we show that the converse doesn't hold by exhibiting a valuative matroid invariant 
not arising from~$K^0(G(d,n))$.

Section~\ref{sec:pushlemma} proves Lemma~\ref{pushtoproj}, the core lemma which we use to push and pull
$K$-classes in diagram~\eqref{diag1}.  In conjunction with equivariant localization,
our computations are reduced to manipulating sums of Hilbert series 
of certain infinite-dimensional $T$-representations, which we may regard as rational functions.
We control these rational functions by expanding them as Laurent
series with various domains of convergence.  We collect a number of
results on this subject in Section~\ref{sec:flipping}.  

Section~\ref{sec:tutte} and Section~\ref{sec:h} are the proofs of the theorems above.  
Finally, Section~\ref{sec:matroidops} takes results from~\cite{KT1}, concerning the behavior of $h_M$ under duality, direct sum and two-sum, and extends them to nonrealizable matroids.

\subsection{Notation}

We write $[n]$ for $\{ 1,2, \ldots, n \}$. For any set $S$, we write $\binom{S}{k}$ for the set of $k$-element subsets of $S$ and $2^S$ for the set of all subsets of $S$.
The use of the notation $I \setminus J$ does not imply that $J$ is contained in $I$.
In addition to the notations $\PP$, $G(d,n)$ and $\Fl$ introduced above, we will write 
$\mathbb{A}^n$ for affine space.

\subsection{Acknowledgments}

The second author was supported by a Research Fellowship from the Clay Mathematics Institute.
We are grateful to David Ben-Zvi, Megumi Harada, Allen Knutson and Sam Payne for providing us with references about and insights into equivariant $K$-theory. 
This paper was finished while the authors visited the American Institute of Mathematics and we are grateful to that institution for the many helpful conversations they fostered.

\section{Background on $K$-theory}\label{sec:Kbackground}

In this section, we will introduce the requisite background on $K$-theory, emphasizing equivariant methods and localization. We have the difficulty of writing for two audiences: combinatorialists who will want to know what $K$-theory is and how to work with it effectively, and algebraic geometers who will want to make sure that we are in fact computing in the $K$-theory which they know and love. We address the second audience first; in section~\ref{CheatCard} we provide a rapid summary aimed at the combinatorial reader.

\subsection{Definition of $K_0$}
If $X$ is any algebraic variety, then $K_0(X)$ denotes the free abelian group generated by isomorphism classes of coherent sheaves on $X$, subject to the relation $[A] + [C] = [B]$ whenever there is a short exact sequence $0 \to A \to B \to C \to 0$. The subspace generated by the classes of vector bundles is denoted $K^0(X)$. If $X$ is smooth, as all the spaces we deal with will be, the inclusion $K^0(X) \into K_0(X)$ is an equality. (See~\cite[Proposition 2.1]{Nie} for this fact, and its equivariant generalization.)

We put a ring structure on $K^0(X)$, generated by the relations $[E] [F] = [E \otimes F]$ for any vector bundles $E$ and $F$ on $X$.
The group $K_0(X)$ is a module for $K^0(X)$, with multiplication given by  $[E] [F] = [E \otimes F]$ where $E$ is a vector bundle and $F$ a coherent sheaf.

For any map $f : X \to Y$, there is a pull back map $f^* : K^0(Y) \to K^0(X)$ given by $f^* [E] = [f^* E]$.
This is a ring homomorphism.
If $f: X \to Y$ is a proper map, there is also a pushforward map $f_*: K_0(X) \to K_0(Y)$ given by 
$$f_* [E] = \sum (-1)^i [R^i f_* E].$$
These two maps are related by the projection formula, which asserts that
\begin{equation}
f_* \big( (f^* [E]) [F] \big) = [E] f_* [F]. \label{ProjFormula} 
\end{equation}
That is, $f_\ast$ is a $K^0(Y)$-module homomorphism, if $K^0(X)$ has the module structure
induced by~$f^\ast$. 

We always have a map from $X$ to a point. 
We denote the pushforward along this map by $\int$, or by $\int_X$ when necessary.\footnote{For the curious reader: There are many analogies between $K^{0}$ and $H^*$. In cohomology, the pushforward from an oriented compact manifold to a point is often denoted by $\int$, because it is given by integration in the deRham formulation of cohomology. We use the same symbol here by analogy.}  
Notice that $K_0(\pt) = K^0(\pt) = \ZZ$, and $\int [E] $ is the holomorphic Euler characteristic of the sheaf $E$.

\subsection{Equivariant $K$-theory}

If $T$ is a torus acting on $X$, then we can form the analogous constructions using $T$-equivariant vector bundles and sheaves.
These are denoted $K^0_T(X)$ and $K_0^T(X)$.
Writing $\Char(T)$ for the lattice of characters, $\mathrm{Hom}(T, \mathbb{C}^*)$, we have $K_0^T(\pt) = K^0_T(\pt) = \ZZ[\Char(T)]$.
Explicitly, a $T$-equivariant sheaf on $\pt$ is simply a vector space with a $T$-action, and the corresponding element of $\ZZ[\Char(T)]$ is the character.

We adopt the abbreviation $\KTP$ for $\ZZ[\Char(T)]$. 
We write $\eqv{[E]}$ for the class of the sheaf $E$ in $K^0_T(X)$.
We also write $\int^T$ for the pushforward to a point in equivariant cohomology.

We pause to discuss Hilbert series and sign conventions.
If $V$ is a finite dimensional representation of $T$, the Hilbert series of $V$ is the sum 
$$\hilb(V) := \sum_{\chi \in \Char(T)} \dim \mathrm{Hom}(\chi, V) \cdot \chi $$
in $\ZZ[\Char(T)]$. 
If $V$ isn't finite dimensional, but $\mathrm{Hom}(\chi, V)$ is for every character $\chi$, then we can still consider this as a formal sum.

Here is one example of particular interest:
let $W$ be a finite dimensional representation of $T$ with character $\sum \chi_i$.
Suppose that all of the $\chi_i$ lie in an open half space in $\Char(T) \otimes \RR$; if this condition holds, we say that $W$ is \newword{contracting}.
Then the Hilbert series of $\Sym(W)$, defined as a formal power series,
represents the rational function $1/(1-\chi_1) \cdots (1-\chi_r)$.
If $M$ is a finitely generated $\Sym(W)$ module, then the Hilbert series of $M$ will 
likewise represent an element of $\mathrm{Frac}(\ZZ[\Char(T)])$ \cite[Theorem 8.20]{MS}.

\begin{remark} 
If $W$ is not contracting, then $\mathrm{Hom}(\chi, \Sym(W)) $ will usually be infinite dimensional.
It is still possible to define Hilbert series in this situation, see~\cite[Section 8.4]{MS}, but we will not need this.
\end{remark}

We now discuss a potentially confusing issue of sign conventions.
Suppose that a group $G$ acts on a ring $A$.
The group $G$ then acts on $\Spec A$ by $g(a) = (g^{-1})^* a$. 
This definition is necessary in order to make sure that both actions are \emph{left} actions.
Although we will only consider actions of abelian groups, for which left and right actions are the same, we still follow this convention.
This means that, if $V$ is a vector space on which $T$ acts by characters $\alpha_1$, $\alpha_2$, \dots, $\alpha_r$, then the coordinate ring of $V$ is $\Sym(V^*)$ and has Hilbert series $1/\prod (1-\alpha_i^{-1})$.
Now, let $W$ be another $T$-representation, with characters $\beta_1$, $\beta_2$, \dots, $\beta_s$.
Consider $W \times V$ as a trivial vector bundle over $V$. 
The corresponding $\Sym(V^*)$ module is $W \otimes \Sym(V^*)$, and has Hilbert series $(\sum \beta_j)/\prod (1-\alpha_i^{-1})$. 
So one cannot simply memorize a rule like ``always invert characters" or ``never invert characters".

When we work out examples, we will need to specify how $T$ acts on various partial flag varieties.
Our convention is that $T$ acts on $\AA^n$ by the characters $t_1^{-1}$, \dots, $t_n^{-1}$. 
Grassmannians, and other partial flag varieties, are flags of \emph{subspaces}, not quotient spaces, and $T$ acts on them by acting on the subobjects of $\AA^n$.
The advantage of this convention is that, for any ample line bundle $L$ on $\Fl(n)$, the action on $\int^T L$ will be by positive powers of the $t_i$.

\begin{example}\label{GrassNbhd}
Let $L$ be the $d$-plane $\Span(e_1, e_2, \ldots, e_d)$ and $M$ be the $(n-d)$-plane $\Span(e_{d+1}, \ldots, e_n)$.
Let $W \subset G(d,n)$ be those linear spaces which can be written as the graph of a linear map $L \to M$. 
This is an open neighborhood of $L$, sometimes called the big Schubert cell.
The cell $W$ is a vector space of dimension $d(n-d)$, naturally identified with $\mathrm{Hom}(L, M)$.
The torus $T$ acts on the vector space $W$ with characters $t_i t_j^{-1}$, for $1 \leq i \leq d$ and $d+1 \leq j \leq n$.
So $T$ acts on the coordinate ring of $W$ with characters $t_i^{-1} t_j$, for $i$ and $j$ as above.
\end{example}

%

\subsection{Localization}

The results in this section are well known to experts, but it seems difficult to find a reference that records them all in one place.
We have attempted to do so; we have made no attempt to find the original sources for these results.
The reader may want to compare our presentation to the description of equivariant cohomology in~\cite{KnutTao}.

In this paper, we will be only concerned with $K^T_0(X)$ for extremely nice spaces $X$.
In fact, the only spaces we will need in the paper are partial flag manifolds and products thereof.
All of these spaces are \newword{equivariantly formal} spaces, meaning that their $K$-theory can be described using the method of \newword{equivariant localization}, which we now explain.

We will gradually add niceness hypotheses on $X$
as we need them.
\begin{Condition} \label{SmoothProj}
Let $X$ be a smooth projective variety with an action of a torus $T$. 
\end{Condition}

Writing $X^T$ for the subvariety of $T$-fixed points, we have a restriction map
$$K^0_T(X) \to K^0_T(X^T) \isom K^0(X^T) \otimes \KTP.$$
Suppose we have:
\begin{Condition} \label{FiniteT0}
$X$ has finitely many $T$-fixed points.
\end{Condition}

\begin{theorem}[{\cite[Theorem 3.2]{Nie}}, see also {\cite[Theorem A.4]{KR}} and {\cite[Corollary 5.11]{VV}}]  \label{thm:localization injects}
In the presence of Condition~\ref{SmoothProj}, the restriction map $K^0_T(X) \to K^0_T(X^T)$ is an injection.
If we have Conditions~\ref{SmoothProj} and~\ref{FiniteT0}, then $K^0_T(X^T)$ is simply the ring of functions from $X^T$ to $\KTP$.
\end{theorem}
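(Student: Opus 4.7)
The plan is to address the two assertions separately, starting with the easier one. Under Condition~\ref{FiniteT0} the fixed scheme $X^T$ is a disjoint union of reduced points $\{p_1,\ldots,p_m\}$, and an equivariant coherent sheaf on $X^T$ is just an $m$-tuple of finite-dimensional $T$-representations. Hence
$$K^0_T(X^T) = \bigoplus_{i=1}^m K^0_T(\{p_i\}) = \bigoplus_{i=1}^m \KTP,$$
which is exactly the ring of functions $X^T \to \KTP$. This handles the identification half.

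For the injectivity of the restriction map, my plan is to reduce to a filtration by contracting affine cells via a Bialynicki--Birula decomposition. Choose a generic one-parameter subgroup $\lambda : \CC^* \to T$ with $X^\lambda = X^T$, and for each $p \in X^T$ form the plus-cell $X_p^+ = \{x \in X : \lim_{t \to 0}\lambda(t)\cdot x = p\}$. Smoothness and projectivity make $X = \bigsqcup_p X_p^+$ into a $T$-invariant cell decomposition in which each $X_p^+$ is an affine space whose $T$-action contracts linearly onto $p$. Ordering the cells compatibly with closure yields a filtration $\emptyset = Z_0 \subset Z_1 \subset \cdots \subset Z_m = X$ by closed $T$-invariant subvarieties with $Z_i \setminus Z_{i-1} \cong X_{p_i}^+$.

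I would then run an induction on $i$ using the excision sequence in equivariant $K$-theory
$$K^T_0(Z_{i-1}) \longrightarrow K^T_0(Z_i) \longrightarrow K^T_0(X_{p_i}^+) \longrightarrow 0,$$
combined with the fact that the $\lambda$-contraction induces an isomorphism $K^T_0(\{p_i\}) \xrightarrow{\cong} K^T_0(X_{p_i}^+)$ (equivariant homotopy invariance for a $T$-contracted affine space, since the pull-push along the contraction is the identity). If $y \in K^0_T(X)$ restricts to $0$ at every fixed point, then its image in $K^T_0(X_{p_m}^+) = \KTP$ vanishes, so $y$ lifts to $K^T_0(Z_{m-1})$; iterating down the filtration, one obtains $y = 0$.

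The main obstacle I anticipate is matching up the various flavors of $K$-theory along the filtration: excision and the Bialynicki--Birula argument naturally produce statements about the coherent-sheaf group $K^T_0$, while the theorem is phrased in $K^0_T$. Condition~\ref{SmoothProj} gives $K^0_T(X) = K^T_0(X)$, but the intermediate subvarieties $Z_i$ need not be smooth, so one must either work consistently with $K^T_0$ and only invoke smoothness at the ambient level, or replace the filtration by a sequence of $T$-invariant blowups with smooth centers so that the excision sequences can be read equally well in either theory. This bookkeeping, rather than the geometric idea, is where most of the labor would lie.
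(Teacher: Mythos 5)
This theorem is not proved in the paper; it is cited verbatim from the references \cite{Nie}, \cite{KR}, \cite{VV}, so there is no in-paper argument to compare against. Evaluating your proposal on its own terms: the second assertion (identifying $K^0_T(X^T)$ with the ring of functions $X^T\to\KTP$) is handled correctly and elementarily, and the Bialynicki--Birula decomposition is indeed the geometry underlying the injectivity proofs in the cited sources. However, your induction step contains a genuine gap, and it is a different one from the $K^T_0$ versus $K^0_T$ bookkeeping you flag at the end. After lifting $y$ to $\tilde y\in K^T_0(Z_{m-1})$ along the pushforward $i_*$ appearing in the localization sequence, you need $\tilde y$ to restrict to zero at the fixed points of $Z_{m-1}$ in order to iterate. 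But pushforward along a closed embedding does not commute with restriction to a fixed point: for $p\in Z_{m-1}$, the class $(i_*\tilde y)|_p$ is computed from an alternating sum over $\mathrm{Tor}_j^{\mathcal O_{X,p}}(\mathcal O_p,-)$, and differs from $\tilde y|_p$ by a $K$-theoretic Euler-class-type factor that can easily be a zero divisor in $\KTP$; moreover, since the intermediate $Z_i$ may be singular, there is not even a clean excess-intersection formula to quantify this. Thus $(i_*\tilde y)|_p=0$ does not yield $\tilde y|_p=0$, and the iteration you describe does not close.

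The standard argument is structurally different: first one shows by induction over the Bialynicki--Birula filtration that each $K^T_0(Z_i)$ is a \emph{free} $\KTP$-module of rank equal to the number of cells in $Z_i$, which requires proving that the connecting maps $K_1^T(X_{p_i}^+)\to K_0^T(Z_{i-1})$ in the localization sequence vanish --- a nontrivial separate step, not a formal consequence of knowing $K^T_\bullet(X_{p_i}^+)$. Only then does one invoke the equivariant localization theorem, which says the restriction $K^T_0(X)\to K^T_0(X^T)$ becomes an isomorphism after inverting finitely many nonzero elements of $\KTP$; freeness of $K^T_0(X)$, generic isomorphism, and equal ranks together force injectivity over $\KTP$. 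Your plan skips the freeness step entirely and tries to propagate pointwise vanishing down the filtration directly, which is where it fails.
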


For example, if $X = G(d,n)$ and $T$ is the standard $n$-dimensional torus, then $X^T$ is $\binom{n}{d}$ distinct points, one for each $d$-dimensional coordinate plane in 
$\CC^n$. 

Let $x$ be a fixed point of the torus action on $X$,
so we have a restriction map $K^0_T(X) \to K^0_T(x) \isom \KTP$. 
It is important to understand how this map is explicitly computed.
For $\xi\in K^0_T(X)$, we write $\xi(x)$ for the image of $\xi$ in $K^0_T(x)$. 

We adopt a simplifying definition, which will hold in all of our examples: 
We say that $X$ is \newword{contracting at $x$} if there is a $T$-equivariant neighborhood of~$x$ 
which is isomorphic to $\AA^N$ with $T$ acting by a contracting linear representation.
We will call the action of $T$ on $X$ \newword{contracting} if it is contracting at every $T$-fixed point.

Let $x$ be contracting. 
Let $U$ be a $T$-equivariant neighborhood of~$x$ isomorphic to a contracting $T$-representation, 
and let $\chi_1$, \dots, $\chi_N$ be the characters by which $T$ acts on $U$.
Let $E$ be a $T$-equivariant coherent sheaf on $U$, corresponding to a graded, finitely generated $\OO(U)$-module $M$.
Then the Hilbert series of $M$ lies in $\mathrm{Frac}(\ZZ[\Char(T)])$;  it is a rational function of the form $k(E)/\prod(1-\chi_i^{-1})$ for some polynomial $k(E)$  in $\ZZ[\Char(T)]$. 

\begin{theorem}\label{thm:Hilb}
If $U$ is an open neighborhood of $x$ as above then $K^0_T(U) \isom \KTP$.
With the above notations, $\eqv{[E]}(x) = k(E)$.
\end{theorem}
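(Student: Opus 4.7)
The plan is to exhibit an explicit isomorphism $K^0_T(U) \cong \KTP$ and identify the fixed-point restriction with the Hilbert-numerator map $\eqv{[E]} \mapsto k(E)$. The key tool is the equivariant Hilbert syzygy theorem applied to the polynomial ring $\OO(U) = \Sym(U^*)$, on which $T$ acts via the characters $\chi_i^{-1}$: every finitely generated graded $T$-equivariant $\OO(U)$-module $M$ admits a finite resolution
\begin{equation*}
0 \to F_r \to \cdots \to F_1 \to F_0 \to M \to 0
\end{equation*}
by graded free modules $F_j = \bigoplus_k \chi_{j,k} \otimes \OO(U)$ with $\chi_{j,k} \in \Char(T)$. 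In particular, the classes of the $T$-equivariantly trivial twisted sheaves $\chi \otimes \OO_U$ generate $K^0_T(U)$ as an abelian group.

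The isomorphism $K^0_T(U) \cong \KTP$ follows either from this resolution together with Theorem~\ref{thm:localization injects} applied to a $T$-equivariant compactification, or more directly from the fact that the contracting $T$-action gives an equivariant deformation retraction of $U$ onto $\{x\}$, so that pullback along $\{x\} \hookrightarrow U$ and restriction to $\{x\}$ are mutually inverse. Concretely, the restriction sends $[\chi \otimes \OO_U] \mapsto \chi$, which realizes every element of $\KTP$ and shows the map is a bijection.

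Next, define $\psi : K^0_T(U) \to \KTP$ by $\psi(\eqv{[E]}) = k(E)$, where $k(E) = \hilb(M)\cdot\prod_i(1-\chi_i^{-1})$ for $M = \Gamma(U,E)$. The existence of $k(E)$ as an element of $\ZZ[\Char(T)]$ is exactly the content of the rational function statement after Remark preceding the localization section (invoking \cite[Theorem 8.20]{MS} under the contracting hypothesis). Well-definedness on $K^0_T(U)$ is immediate from the additivity of Hilbert series on short exact sequences of graded modules. On the generating class $\chi \otimes \OO_U$ we have $\hilb(\chi \otimes \Sym(U^*)) = \chi/\prod_i(1-\chi_i^{-1})$, so $\psi(\chi \otimes \OO_U) = \chi$, agreeing with the fixed-point restriction on the same class. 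Since both maps are additive homomorphisms out of $K^0_T(U)$ and they coincide on a set of generators, they coincide everywhere, proving $\eqv{[E]}(x) = k(E)$.

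The main obstacle is the equivariant syzygy step: one must justify that every equivariant coherent sheaf on $U$ can be resolved by equivariantly trivial vector bundles $\chi \otimes \OO_U$, and that the alternating sum of numerators along such a resolution recovers $k(E)$. Once this reduction is in place, the rest of the argument is formal, and the polynomiality of $k(E)$ — the fact that the denominator $\prod_i(1-\chi_i^{-1})$ fully clears from $\hilb(M)$ — also drops out of the finite free resolution.
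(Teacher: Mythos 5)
Your proposal is correct and follows essentially the same route as the paper: both arguments rest on the equivariant Hilbert syzygy theorem over $\OO(U) = \Sym(U^*)$, the observation that the character-twisted free module $S[\chi^{-1}]$ restricts to $\chi$ at the fixed point, and the identification of the alternating sum of twists in a finite free resolution with the numerator $k(E)$ of the Hilbert series (the paper delegates this last identity to \cite[Proposition 8.23]{MS}). One small difference in packaging: rather than appealing to a topological deformation-retraction intuition for $K^0_T(U)\cong\KTP$, the paper simply computes $\eqv{[E]}$ directly as the alternating sum from the resolution; your cleaner algebraic route to the isomorphism is to note that $\chi\mapsto[\chi\otimes\OO_U]$ is surjective by the resolution and is split by restriction to $x$, which avoids invoking equivariant homotopy invariance of algebraic $K$-theory.
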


\begin{proof}[Proof sketch]
The restriction map $K^0_T(X) \to K^0_T(x)$ factors through $K^0_T(U)$, so it is enough to show that $\eqv{[E]}|_U$ is $k(E)$.

Let $M$ be the $\mathcal{O}(U)$-module coresponding to $U$, and abbreviate $\mathcal{O}(U)$ to $S$.
Then $M$ has a finite $T$-graded resolution by free $S$-modules as in \cite[Chapter 8]{MS}, say\footnote{Because we write our grading group multiplicatively, we write $S[\chi^{-1}]$ where $S[-\chi]$ might appear more familiar. This notation will only arise within this proof.}
$$ 0 \to \bigoplus_{i=1}^{b_N} S[\chi_{iN}^{-1}] \to \cdots \to \bigoplus_{i=1}^{b_1} S[\chi_{i1}^{-1}] \to \bigoplus_{i=1}^{b_0} S[\chi_{i0}^{-1}] \to M \to 0.$$

The sheafification of $S[\chi^{-1}]$, by definition, has class $\chi$ in $K^0_T(U)$.
So 
\begin{equation}
\eqv{[E]} = \sum_{j=1}^N (-1)^i \sum_{i=1}^{b_j} \chi_{i j}. \label{Resolution}
\end{equation}
As the reader can easily check, or read in \cite[Proposition 8.23]{MS}, the sum in~(\ref{Resolution}) is $k(E)$.
\end{proof}

In particular, if $E$ is a vector bundle on $U$, and $T$ acts on the fiber over $x$ with character $\sum \eta_i$, then $\eqv{[E]}(x) =\sum \eta_i$.

\begin{remark}
The positivity assumption is needed only for convenience. In general, let $x$ be a smooth variety with $T$-action, $x$ a fixed point of $X$, and let $E$ be an equivariant coherent sheaf on $X$.
Then $\OO_x$ is a regular local ring, and $E_x$ a finitely generated $\OO_x$ module. Passing to the associated graded ring and module, $\mathrm{gr} \ E_x$ is a finitely generated, $T$-equivariant $(\mathrm{gr} \ \OO_x)$-module, and $\mathrm{gr} \ \OO_x$ is a polynomial ring.
If the $T$-action on the tangent space at $x$ is contracting, then we can define $\eqv{[E]}(x)$ using the 
Hilbert series of $\mathrm{gr} \ E_x$; if not, we can use the trick of~\cite[Section 8.4]{MS} to define $k(\mathrm{gr} \ E_x)$ and, hence, $\eqv{[E]}(x)$. 
But we will not need either of these ideas.
\end{remark}

We have now described, given a $T$-equivariant sheaf $E$ in $K_0^T(X)$, how to describe it as a function from $X^T$ to $\KTP$. 
It will also be worthwhile to know, given a function from $X^T$ to $\KTP$, when it is in $K^0_T(X)$. 
For this, we need 
\begin{Condition} \label{FiniteT1}
There are finitely many $1$-dimensional $T$-orbits in $X$, each of which has closure isomorphic to $\PP^1$. 
\end{Condition}

Each $\mathbb{P}^1$ must contain two $T$-fixed points.

\begin{theorem}[{\cite[Corollary 5.12]{VV}}, see also {\cite[Corollary A.5]{KR}}]  \label{thm:K0T}
Assume conditions~\ref{SmoothProj}, \ref{FiniteT0} and \ref{FiniteT1}.
Let $f$ be a function from $X^T$ to~$\KTP$. 
Then $f$ is of the form $\xi({\cdot})$ for some $\xi\in K^0_T(X)$ if and only if 
the following condition holds: For every one dimensional orbit, 
on which $T$ acts by character $\chi$ and for which $x$ and $y$ are the $T$-fixed points
in the closure of the orbit, we have
$$f(x) \equiv f(y) \mod 1-\chi.$$
\end{theorem}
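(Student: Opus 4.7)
The necessity (forward direction) comes from a restriction-to-$\PP^1$ calculation. For any one-dimensional orbit with closure $C \cong \PP^1$, $T$-fixed points $x, y$, and orbit character $\chi$, one has $K^0_T(\PP^1) \cong \KTP[L]/(L-\chi_0)(L-\chi_1)$ where $L = [\OO(-1)]$ and $C = \PP(\chi_0 \oplus \chi_1)$ as $T$-representations. Writing any class uniquely as $A + BL$ and solving for $A, B$ given the two values $(a, b) = (A+B\chi_0, A+B\chi_1)$ shows that the image of the restriction to $K^0_T(\{x,y\}) \cong \KTP^{\oplus 2}$ is exactly $\{(a,b) : (\chi_0 - \chi_1) \mid (a-b)\}$. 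Since $\chi_0 - \chi_1$, $1-\chi_1\chi_0^{-1}$, and $1-\chi$ differ pairwise by units in $\KTP$, this is the claimed congruence. The restriction $K^0_T(X) \to K^0_T(\{x, y\})$ factors through $K^0_T(C)$, so any $\xi \in K^0_T(X)$ satisfies the congruence.

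For the reverse direction I would produce a triangular $\KTP$-basis of the image via a Bialynicki-Birula decomposition. Pick a one-parameter subgroup $\lambda: \CC^* \to T$ generic enough that $X^\lambda = X^T$ and $\lambda$ pairs nontrivially with every $T$-weight on every tangent space $T_p X$. Write $X^\pm_p$ for the attracting cells under $\pm\lambda$; each is a $T$-invariant affine subvariety of $X$. Totally order $X^T = \{p_1, \ldots, p_m\}$ extending the partial order ``$p_j \in \overline{X^+_{p_i}} \Rightarrow j \le i$'', so that each $Z_k := X^+_{p_1} \cup \cdots \cup X^+_{p_k}$ is closed in $X$. Set $\eta_p := [\OO_{\overline{X^-_p}}] \in K^0_T(X) = K_0^T(X)$. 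Since the BB partial orders for $\lambda$ and $-\lambda$ are opposite, $p_j \in \overline{X^-_{p_i}}$ iff $p_i \in \overline{X^+_{p_j}}$ iff $i \le j$. Theorem~\ref{thm:Hilb} then yields $\eta_{p_i}(p_j) = 0$ for $j < i$, and a local Koszul computation at $p_i$ (using that $X^-_{p_i}$ is locally cut out by the conormal coordinates) gives $\eta_{p_i}(p_i) = \prod_\psi (1 - \psi^{-1})$, with the product ranging over the $\lambda$-positive $T$-weights $\psi$ on $T_{p_i} X$.

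Given $f : X^T \to \KTP$ satisfying all the congruences, I inductively build $c_1, \ldots, c_m \in \KTP$ with $f = \sum_i c_i\, \eta_{p_i}(\cdot)$. At stage $i$, the residual $g_i := f - \sum_{j<i} c_j\, \eta_{p_j}(\cdot)$ vanishes at $p_1, \ldots, p_{i-1}$ and inherits all the congruences (as a $\KTP$-linear combination of functions that satisfy them, namely $f$ and the $\eta_{p_j}(\cdot)$, each of which satisfies the congruences by the forward direction). For each $\lambda$-positive tangent character $\psi$ at $p_i$, the corresponding 1-dimensional orbit has its other endpoint $q = p_{j'}$ in $\overline{X^+_{p_i}} \setminus \{p_i\}$, so $j' < i$ and $g_i(q) = 0$; the congruence then yields $g_i(p_i) \equiv 0 \pmod{1 - \psi^{-1}}$. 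The main obstacle is upgrading these individual divisibilities to divisibility by the full product $\eta_{p_i}(p_i) = \prod_\psi (1 - \psi^{-1})$. This reduces to pairwise coprimality of the factors $1 - \psi^{-1}$ in the Laurent polynomial ring $\KTP$, which in turn amounts to no two tangent characters at $p_i$ being powers of a common character. This is precisely where Condition~\ref{FiniteT1} is essential: two parallel tangent characters at $p_i$ would make the corresponding weight 2-plane in $T_{p_i} X$ sweep out a one-parameter family of 1-dimensional $T$-orbits through $p_i$, contradicting the finiteness of 1-dimensional orbits. Coprimality together with the UFD structure of $\KTP$ then gives $\eta_{p_i}(p_i) \mid g_i(p_i)$, so $c_i := g_i(p_i)/\eta_{p_i}(p_i)$ lies in $\KTP$, closing the induction.
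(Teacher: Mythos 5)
The paper does not prove Theorem~\ref{thm:K0T}; it is cited directly from \cite{VV} and \cite{KR}. So there is no in-paper proof to compare against, and I am evaluating your argument on its own merits.

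Your necessity argument is fine, and your sufficiency argument is one of the standard roads to this result: set up a triangular $\KTP$-basis of $K^0_T(X)$ indexed by a Bia{\l}ynicki-Birula decomposition, then do descending induction on the fixed points. The leading term $\eta_{p_i}(p_i) = \prod_\psi(1-\psi^{-1})$ is correctly computed, and the triangularity $\eta_{p_i}(p_j)=0$ for $j<i$ does hold once you fix a closure-compatible total order (a Morse/moment-map function for $\lambda$ gives one; note that your intermediate claim ``the BB orders for $\lambda$ and $-\lambda$ are opposite'' is a bit stronger than what you need and not obviously true as stated -- you only need that $p_j\in\overline{X^-_{p_i}}$ with $j\neq i$ forces $\mu(p_j)>\mu(p_i)$, hence $j>i$).

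There is, however, a genuine gap in the inductive step. You write ``For each $\lambda$-positive tangent character $\psi$ at $p_i$, the \emph{corresponding} 1-dimensional orbit $\ldots$'' and then invoke the congruence hypothesis for that orbit. This presupposes that every $\lambda$-positive tangent weight at $p_i$ actually arises as the tangent weight of some invariant $\mathbb{P}^1$ through $p_i$; without such a curve, the hypothesis of the theorem provides no constraint in the direction $\psi$, and you cannot conclude $(1-\psi^{-1})\mid g_i(p_i)$. This is exactly the ``the moment graph has the right number of edges'' half of the GKM package, and it must be deduced from Conditions~\ref{SmoothProj}--\ref{FiniteT1}. You correctly invoke Condition~\ref{FiniteT1} for pairwise non-parallelism (hence coprimality) of the tangent weights, but that same argument forces each weight space of $T_{p_i}X$ to be one-dimensional; one then still needs to produce a curve, e.g.\ by showing the fixed locus $X^{(\ker\psi)^\circ}$ of the codimension-one subtorus is a smooth $T$-invariant curve through $p_i$ with tangent line $V_\psi$ (its tangent space at $p_i$ is $(T_{p_i}X)^{(\ker\psi)^\circ}=V_\psi$, using the non-parallelism again), which the residual $\CC^*$ then acts on with generic orbit one-dimensional. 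With this lemma inserted, your induction closes; without it, the step from the hypotheses to $(1-\psi^{-1})\mid g_i(p_i)$ is unjustified.
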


We cannot conclude that $\xi$ is itself the class $\eqv{[E]}$ of a $T$-equivariant sheaf $E$,
for reasons of positivity. For example, $\xi(x)=-1$ does not describe the class of a sheaf.

\begin{example} \label{GrassEx}
Let's see what this theorem means for the Grassmannian $G(d,n)$.
Here $\KTP$ is the ring of Laurent polynomials $\ZZ[t_1^{\pm}, t_2^{\pm}, \ldots, t_n^{\pm}]$. 
The fixed points $G(d,n)^T$ are the linear spaces of the form $\Span(e_i)_{i \in I}$ for $I \in \binom{[n]}{d}$. We will write this point as $x_I$ for $I \in \binom{[n]}{d}$.
So an element of $K^0_T(G(d,n))$ is a function $f:\binom{[n]}{d} \to \KTP$ obeying certain conditions.
What are those conditions? Each one-dimensional torus orbit joins $x_I$ to $x_J$ where $I = S \sqcup \{ i \}$ and $J = S \sqcup \{ j \}$ for some $S$ in $\binom{[n]}{d-1}$. 
Thus an element of $K^0_T(G(d,n))$ is a function $f:\binom{[n]}{d} \to \KTP$ such that
$$f(S \sqcup \{ i \}) \equiv f(S \sqcup \{ j \}) \mod 1-t_i/t_j$$
for all $S \in \binom{[n]}{d-1}$ and $i$, $j \in [n] \setminus S$.
\end{example}

We now describe how to compute tensor products, pushforwards and pullbacks in the localization description.
The first two are simple. 
Tensor product corresponds to multiplication. That is to say,
\begin{equation}
\left( \eqv{[E]}\eqv{[F]} \right) (x) = \eqv{[E]}(x) \cdot \eqv{[F]}(x). \label{Tensor} 
\end{equation}
Pullback corresponds to pullback. That is to say, if $X$ and $Y$ are equivariantly formal spaces, and $\pi: X \to Y$ a $T$-equivariant map, then
\begin{equation}
\left( \pi^* \eqv{[E]} \right) (x) = \eqv{[E]} \left( \pi(x) \right) \label{Pullback}
\end{equation}
for $x \in X^T$ and $\eqv{[E]} \in K^T_0(X)$. 
The proofs are simply to note that pullback to $X^T$ and $Y^T$ is compatible with pullback and with multiplication in the appropriate ways.

The formula for pushforward is somewhat more complex. 
Let $X$ and $Y$ be pointed and $\pi:X \to Y$ a $T$-equivariant map.
For $x \in X^T$, let $\chi_1(x)$, $\chi_2(x)$, \dots, $\chi_r(x)$ be the characters of $T$ acting on a neighborhood of $x$; for $y \in Y^T$, define $\eta_1(y)$, \dots, $\eta_s(y)$ similarly.
Then we have the formula
\begin{equation}
\frac{(\pi_* \eqv{[E]})(y)}{(1- \eta_1^{-1}(y)) \cdots (1-\eta_s^{-1}(y))} = \sum_{x \in X^T, \ \pi(x) =y} \frac{\eqv{[E]}(x)}{(1-\chi_1^{-1}(x)) \cdots (1-\chi_r^{-1}(x))}. \label{PushForward}
\end{equation}
See \cite[Theorem 5.11.7]{CG}.

It is often more convenient to state this equation in terms of multi-graded Hilbert series. If $\hilb(E_x)$ is the multi-graded Hilbert series of the stalk $E_x$, then equation~(\ref{PushForward}) reads:
\begin{equation}
 \hilb(\pi_*(E)_y) = \sum_{x \in X^T, \ \pi(x) =y} \hilb(E_x) \label{PushForwardHilb}
\end{equation}

It is also important to note how this formula simplifies in the case of the pushforward to a point. In that case, we get
\begin{equation}
 \int_X^T \eqv{[E]} = \sum_{x \in X^T} \hilb(E_x) \label{PushToPoint}
\end{equation}
This special case is more prominent in the literature than the general result~(\ref{PushForward}); see for example \cite[Section 4]{Nie} for some classical applications.

Finally, we describe the relation between ordinary and $T$-equivariant $K$-theories. 
There is a map from equivariant $K$-theory to ordinary $K$-theory by forgetting the $T$-action.
In particular, the map $\KTP \to K^0(\pt) = \ZZ$ just sends every character of $T$ to $1$.
In this way, $\ZZ$ becomes a $\KTP$-module.
Thus, for any space $X$ with a $T$-action, we get a map $K^0_T(X) \otimes_{\KTP} \ZZ \to K^0(X)$.
All we will need is that this map exists, but the reader might be interested to know the stronger result: 
\begin{theorem}[{\cite[Theorem 4.3]{Mer}}]\label{thm:K0TtensorQ}
Assuming Condition~\ref{SmoothProj},  the map $$K^0_T(X) \otimes_{\KTP} \ZZ \to K^0(X)$$ is an isomorphism.
\end{theorem}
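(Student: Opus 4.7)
My plan is to reduce the isomorphism to a Bialynicki-Birula (BB) cell decomposition of $X$ and then produce compatible $K$-theory bases on both sides.

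First, fix a generic one-parameter subgroup $\lambda : \CC^* \into T$ whose fixed locus equals $X^T$. Since $X$ is smooth and projective, the BB theorem yields an ordered decomposition $X = C_1 \sqcup \cdots \sqcup C_r$ into locally closed $T$-invariant affine cells $C_k \cong \AA^{n_k}$ carrying a linear $T$-action, such that $X_k := C_1 \cup \cdots \cup C_k$ is closed in $X$ for every $k$. (In the applications of this paper $X^T$ is finite so the cells are genuine affine spaces; in general the $C_k$ are affine-space bundles over connected components of $X^T$ and the same argument carries through.)

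For each open-closed pair $X_{k-1} \subseteq X_k \supseteq C_k$, localization furnishes right-exact sequences in equivariant and ordinary $K$-theory, which we identify with $K_0$ by smoothness:
\begin{align*}
K^0_T(X_{k-1}) \to K^0_T(X_k) \to K^0_T(C_k) \to 0,\\
K^0(X_{k-1}) \to K^0(X_k) \to K^0(C_k) \to 0.
\end{align*}
Equivariant homotopy invariance gives $K^0_T(C_k) = \KTP \cdot [\OO_{C_k}]$ and $K^0(C_k) = \ZZ$, and the class $[\OO_{\overline{C_k}}]$ splits the restriction to the cell since the structure sheaf of the cell closure restricts back to $\OO_{C_k}$. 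Iterating through the filtration yields compatible direct-sum decompositions
$$K^0_T(X) = \bigoplus_{k=1}^{r} \KTP \cdot [\OO_{\overline{C_k}}], \qquad K^0(X) = \bigoplus_{k=1}^{r} \ZZ \cdot [\OO_{\overline{C_k}}].$$

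The forgetful comparison map sends the first basis to the second term by term, so tensoring over $\KTP$ with $\ZZ$ produces the desired isomorphism. The main obstacle is justifying that each BB cell closure $\overline{C_k}$ is well-behaved enough that its equivariant structure sheaf provides the splitting at every stage of the filtration; this is classical under Condition~\ref{SmoothProj}, and in the concrete settings relevant to this paper the closures are Schubert varieties, so no serious technical issue intervenes.
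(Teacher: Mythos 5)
The paper does not actually prove this theorem; it cites it directly to Merkur'ev~\cite[Theorem 4.3]{Mer}, where it is established (in much greater generality, for all higher $K$-groups and for split diagonalizable groups acting on quasi-projective schemes) by a filtration-by-weights argument. Your Bialynicki-Birula cell-filtration proof is therefore a genuinely different and more geometric route, and it is the standard one for flag-type varieties. Worth giving, but it has two real gaps.

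The central gap is injectivity. The localization sequence $K^0_T(X_{k-1}) \to K^0_T(X_k) \to K^0_T(C_k) \to 0$ is only \emph{right} exact, and the class $[\OO_{\overline{C_k}}]$ furnishes a splitting of the surjection onto $K^0_T(C_k)$; this shows the cell-closure classes \emph{generate} $K^0_T(X_k)$, but it does not make $K^0_T(X_{k-1}) \to K^0_T(X_k)$ injective, which is precisely what the direct-sum decomposition requires. The kernel of that map is controlled by $K^T_1(C_k)$, and over $\CC$ the groups $K_1$ are far from zero (already $K_1(\CC)=\CC^\times$), so the long exact sequence does not degenerate for free. The standard repair is a separate linear-independence step: restrict all of the classes $[\OO_{\overline{C_j}}]$ to $X^T$ and observe that, in the BB order, the matrix of restrictions is triangular with nonzero diagonal (the Koszul factors from the normal weights at each fixed point), so the classes are $\KTP$-independent by Theorem~\ref{thm:localization injects}. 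Relatedly, you should state explicitly that the cells can be ordered so that $\overline{C_k}\subseteq X_k$; this ``filtrability'' holds for smooth projective $X$ but is a hypothesis, not a formality.

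A secondary gap: the theorem is asserted under Condition~\ref{SmoothProj} alone, with no finiteness of $X^T$. The parenthetical ``the same argument carries through'' for positive-dimensional fixed components conceals real content: there $K^0_T(C_k)\cong K^0_T(F_k)$ is a free $\KTP$-module of rank greater than one, and a single class $[\OO_{\overline{C_k}}]$ cannot split the restriction; one instead needs to push forward an entire basis of $K^0_T(F_k)$ along the affine bundle and argue again. For the spaces used in this paper (products of partial flag varieties, finite $X^T$) your argument is fine once the injectivity point is filled, but it does not yet prove the statement at the stated level of generality.
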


\subsection{Moment graphs: a starting point for combinatorialists} \label{CheatCard}

There is a technology known as \newword{moment graphs} which is extremely useful for describing equivariant
functors such as $K$-theory in the situation where localization applies. We begin by describing this in a purely combinatorial setting. We recommend~\cite{GZ} as a reference for the use of moment graphs in equivariant cohomology, which is extremely similar to the $K$-theory setup.

Let $T$ be a torus with character group $\Char(T)$. For the next several paragraphs, the torus $T$ will only appear as a formal symbol, and the reader can think of the free abelian group $\Char(T)$
as the primary object. The ring $K^0_T$ is the group ring $\mathbb{Z}[\Char(T)]$, 
a Laurent polynomial ring. 

Let $\Gamma$ be a finite graph whose vertices are labelled with elements of the lattice $\Char(T)$. For any edge $(v,w)$ of $\Gamma$, let $d(v,w)$ be the minimal lattice vector along $v-w$. Let $(K^0_T)^{\mathrm{Vert}(\Gamma)}$ be the ring of functions $v \mapsto f_v$ from the vertices of $\Gamma$ to $K^0_T$, with componentwise sum and product. Let $K^0_T(\Gamma)$ be the subring of  $(K^0_T)^{\mathrm{Vert}(\Gamma)}$ consisting of those functions such that $f_v \equiv f_w \mod d(v,w)$. 

The reason we have introduced the ring $K^0_T(\Gamma)$ is that many of the rings we care about can be described by this construction, 
in such a way that the vertices of $\Gamma$ correspond to torus fixed points $X^T$. We begin with the case of the Grassmannian. Let $T$ be the $n$-dimensional torus of diagonal matrices in $\GL_n$. Let $G(d,n)$ be the 
Grassmannian of $d$-planes in $n$-space. For $I$ in $\binom{[n]}{d}$, let $e_I \in \ZZ^n$ be the $(0,1)$-vector whose 1 coordinates are those in $I$. Let $\Gamma(d,n)$ be the graph whose vertices are $\{ e_I : I \in \binom{[n]}{d} \}$, and where there is an edge between $e_I$ and $e_J$ if $I$ and $J$ only differ by a single element. Then $K^0_T(G(d,n)) \cong K^0_T(\Gamma(d,n))$. 
We say $\Gamma(d,n)$ is the moment graph of $G(d,n)$.

To be more explicit, $K^0_T(G(d,n))$ is the ring of maps $I \mapsto f_I$ from $\binom{[n]}{d}$ to  $\ZZ[t_1^{\pm}, \ldots, t_n^{\pm}]$ such that $f_{Si} \equiv f_{Sj} \mod t_i-t_j$.  See example~\ref{GrassEx}. The group $S_n$ acts on $K^0_T(G(d,n))$ both by permuting the vertices of $\Gamma(d,n)$ and by acting on the Laurent polynomial ring $K^0_T \cong \ZZ[t_1^{\pm}, \ldots, t_n^{\pm}]$. 

More generally, we will want to think about partial flag varieties. 
Let $\Fl(d_1, d_2, \ldots, d_r; n)$ be the partial flag variety of flags of dimensions 
$d_1 <  d_2 <  \ldots < d_r$. Let $I_{\bullet} = (I_1, I_2, \ldots, I_r)$ be a chain of subsets of $[n]$, with $|I_j|=d_j$ and $I_1 \subset I_2 \subset \cdots \subset I_r$. Let $e(I_{\bullet}) = \sum e_{I_j}$. Let $\Gamma(d_1, \ldots, d_r; n)$ be the graph whose vertices are the $e(I_{\bullet})$, as $I_{\bullet}$ ranges over chains of subsets as above, and where there is an edge between $(I_1, I_2, \ldots, I_r)$ and $(J_1, J_2, \ldots, J_r)$ if (a) for every index $k$ except one, we have $I_k=J_k$ and (b) for that one index, $I_k$ and $J_k$ differ by a single element. Then $K^0_T(\Fl(d_1, d_2, \ldots, d_r; n)) = K^0_T(\Gamma(d_1, \ldots, d_r; n))$.

We will also work with products of flag varieties.   
We have $K^0_{T \times T'}(X \times X')  \cong  K^0_T(X) \otimes K^0_{T'}(X')$.
This can be done in moment graphs also,
$K^0_{T\times T'}(\Gamma\times\Gamma')\cong K^0_T(\Gamma) \otimes K^0_{T'}(\Gamma')$,
where $\Gamma\times\Gamma'$ is the Cartesian product graph.

And we will sometimes need to work with tori smaller than the full group of diagonal matrices. 
If we have an embedding of tori $S \subset T$, then we get a surjection of character groups $\Char(T) \to \Char(S)$, and hence a surjection of rings $K^0_T \to K^0_S$. 
For any homogeneous space $X$ (and in greater generality, which we don't need) we have 
$$K^0_S(X) \cong K^0_T(X) \otimes_{K^0_T} K^0_S.$$
For $X$ a homogeneous space $G/P$, and $T$ any subtorus of $G$, the ring $K^0_T(X)$ is a free finite rank $K^0_T$-module, so this tensor product is easy to describe as an abelian group (though potentially quite subtle as a ring). 
The rank of this module is the number of $T$ fixed points, namely
$|W/W_P|$, where $W$ is the corresponding Coxeter group and $W_P$ the parabolic subgroup. For example, in the case of $G(d,n)$, this module has rank $\binom{n}{d}$. 

A particularly important case of the previous paragraph is when $S$ is the trivial torus, so $K^0_S \cong \ZZ$. In this case, we drop the subscript $S$. Note, in particular, that the $S_n$ action becomes trivial 
on the non-equivariant $K$-theory $K^0(G(d,n))$.
We reinforce that passing through equivariant $K$-theory is essential to obtain a description
of non-equivariant $K$-theory using moment graphs; there is no natural way to interpret
tensoring from $K^0_T$ down to~$\ZZ$ as modifying our definition of $K^0_T(\Gamma)$ to involve no characters.

We can define the ring $K^0_T(X)$ whenever $X$ is a variety equipped with a $T$-action. 
Classes in this ring come from sheaves on $X$. 
There are two important sources of such sheaves: subvarieties of $X$, and vector bundles on $X$. 
In the case where $X$ is a homogeneous space $G/P$, we have that $K^0_T(X)$ is a free $K^0_T$-module on the classes of the Schubert sheaves. 
This should remind the reader of Schubert calculus;  we refer the reader to~\cite{Buch} for a guide to $K^0(G(d,n))$ which pursues this analogy. 
See~\cite{Knutson} for a description of the class in $K^0_T(\Fl(n))$ corresponding to a Schubert variety.  The corresponding formula for $G(d,n)$ should be extractable from~\cite{Knutson}, but does not appear to have been published.
For us, the most important subvarieties of $G(d,n)$ will be not Schubert varieties, but torus orbit closures. 
There is one of these for each matroid realizable over $\mathbb{C}$; we discuss how to assign a class in $K^0_T(G(d,n))$ to a matroid in section~\ref{sec:MatsGrs}.

The other important classes in $K^0_T(X)$ are those coming from vector bundles. 
If $S$ is the tautological $d$-dimensional bundle over $G(d,n)$, and $\mathbb S_{\lambda}$ is a Schur functor, consider the vector bundle $\mathbb{S}_{\lambda}(S)$. The corresponding class in $K^0_T(G(d,n))$ assigns, to the vertex $e_I$, the Schur polynomial $s_{\lambda}(t_i^{-1})_{i \in I}$. If we use 
the dual bundle $S^{\vee}$, the quotient bundle $Q$ or the dual quotient bundle $Q^{\vee}$ instead, then we get the Schur polynomials $s_{\lambda}(t_i)_{i \in I}$, $s_{\lambda}(t_j)_{j \not \in I}$ and $s_{\lambda}(t_j^{-1})_{j \not \in I}$ respectively.

Finally, we discuss the functorial properties of $K^0_T$. Given a map $f_*: X \to Y$, equivariant with respect to a $T$-action, we get both a pull-back $f^*: K^0_T(Y) \to K^0_T(X)$ and (given a condition called properness, which is true in our examples) a push-forward $f^*: K^0_T(X) \to K^0_T(Y)$. These are computed in the moment graph setting by equations~\eqref{Pullback} and~\eqref{PushForward}.

\section{Matroids and Grassmannians}\label{sec:MatsGrs}

Let $\Elts$ be a finite set (the \newword{ground set}), which we will usually take to be $[n]$. 
For $I \subseteq \Elts$, we write $e_I$ for the vector $\sum_{i \in I} e_i$ in $\ZZ^{\Elts}$. 

Let $M$ be a collection of $d$-element subsets of $\Elts$. 
Let $\Poly(M)$ be the convex hull of the vectors $e_I$, as $I$ runs through $M$.
The collection $M$ is called a matroid if it obeys any of a number of equivalent conditions.
Our favorite is due to Edmonds:
\begin{theorem}[\cite{Edmonds}; see also {\cite[Theorem 4.1]{GGMS}}]
 $M$ is a matroid if and only if $M$ is nonempty and every edge of $\Poly(M)$ is in the direction $e_i-e_j$ for some $i$ and $j \in \Elts$. 
\end{theorem}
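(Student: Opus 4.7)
The plan is to prove the two directions separately. For the ``matroid implies short edges'' direction, I would assume $M$ satisfies the basis exchange axiom and argue by contradiction: suppose $[e_I, e_J]$ is an edge of $\Poly(M)$ with $|I \triangle J| \ge 4$. Picking any $i \in I \setminus J$, I would invoke Brualdi's symmetric exchange property (a standard consequence of basis exchange) to obtain $j \in J \setminus I$ such that both $I' := (I \setminus i) \cup j$ and $J' := (J \setminus j) \cup i$ lie in $M$. The hypothesis $|I \triangle J| \ge 4$ ensures $\{I', J'\} \cap \{I, J\} = \emptyset$. Because $e_{I'} + e_{J'} = e_I + e_J$, the midpoints of $[e_I, e_J]$ and $[e_{I'}, e_{J'}]$ coincide, so a linear functional $w$ whose max face is the edge $[e_I, e_J]$ would be forced to take its maximum at $e_{I'}$ and $e_{J'}$ too. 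This contradicts the fact that these vertices are not on the edge.

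For the converse, assume every edge of $\Poly(M)$ is in a direction $e_i - e_j$. Given $I, J \in M$ and $i \in I \setminus J$, I need to produce $j \in J \setminus I$ with $(I \setminus i) \cup j \in M$. The main tool is the tangent cone $T_I$ of $\Poly(M)$ at $e_I$: as for any polytope this cone is generated by the edge directions emanating from $e_I$, which by hypothesis are exactly the vectors $e_a - e_b$ with $b \in I$, $a \notin I$, and $(I \setminus b) \cup a \in M$. Since $e_J \in \Poly(M)$, the displacement $e_J - e_I$ lies in $T_I$, so I can write
\begin{equation*}
e_J - e_I \;=\; \sum_{a \notin I,\; b \in I} \mu_{a,b}\,(e_a - e_b),
\end{equation*}
with $\mu_{a,b} \ge 0$ supported on pairs $(a,b)$ for which $(I \setminus b) \cup a \in M$. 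Reading off the coefficient of each $e_k$ yields flow-conservation identities: $\sum_b \mu_{a,b} = 0$ for $a \notin I \cup J$, $\sum_a \mu_{a,b} = 0$ for $b \in I \cap J$, and $\sum_a \mu_{a,i} = 1$ for our chosen $i \in I \setminus J$. By nonnegativity the first two identities kill the corresponding $\mu$'s, so only pairs with $a \in J \setminus I$ and $b \in I \setminus J$ contribute. The last identity then forces some $\mu_{a,i}$ to be positive; this $a$ works as the desired $j$.

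The step I expect to be the main obstacle is this backward direction, specifically the observation that flow conservation at coordinates outside $I \cup J$, combined with nonnegativity, forces every generator $e_a - e_b$ used with positive weight to have $a$ genuinely in $J \setminus I$. Naively one might fear that $e_J - e_I$ could be realized in the tangent cone via ``detours'' through swaps to elements outside $J$, but the constraints rule this out and deliver the required combinatorial exchange. The forward direction is by comparison elementary, requiring only the symmetric exchange lemma plus the basic fact that two chords of a polytope with coincident midpoints must lie in a common face.
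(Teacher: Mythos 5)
The paper does not prove this theorem; it cites Edmonds and GGMS \cite[Theorem~4.1]{GGMS}, so there is no proof of record to compare against. Evaluating your argument on its own terms, both directions are sound.

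Your forward direction is correct: if $[e_I,e_J]$ were an edge with $|I\triangle J|\ge 4$, then $i\in I\setminus J$ and the symmetric exchange property (Brualdi) furnish $j\in J\setminus I$ with $I'=(I\setminus i)\cup j$ and $J'=(J\setminus j)\cup i$ both in~$M$; since $e_{I'}+e_{J'}=e_I+e_J$, any supporting functional for the edge is also maximized at $e_{I'}$ and~$e_{J'}$, and the cardinality condition $|I\triangle J|\ge 4$ guarantees these are vertices distinct from $e_I$ and~$e_J$ --- a contradiction, since a $1$-dimensional face contains only two vertices.

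For the converse, one small imprecision: you write that the edge directions at $e_I$ are ``exactly'' the vectors $e_a-e_b$ with $(I\setminus b)\cup a\in M$. This is too strong; what the hypothesis guarantees is the containment in one direction (every edge at $e_I$ points to some $e_{(I\setminus b)\cup a}$ with that set in~$M$), and that is all you actually use. The decomposition of $e_J-e_I$ into extreme rays of the tangent cone then has its support among pairs $(a,b)$ with $b\in I$, $a\notin I$, and $(I\setminus b)\cup a\in M$; the flow-conservation reading of coordinates, together with nonnegativity, forces the support into $a\in J\setminus I$, $b\in I\setminus J$, and the identity at the coordinate $i$ produces the exchange partner~$j$. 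This is exactly the GGMS-style argument and it works. If you tighten the ``exactly'' to ``contained among,'' the writeup is complete.
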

See \cite{Crapo} for motivation and \cite{NW} for more standard definitions.

We now explain the connection between matroids and Grassmannians.
We assume basic familiarity with Grassmannians and their Pl\"ucker embedding.  See \cite[Chapter 14]{MS} for background.
Given a point $x$ in $G(d,n)$, the set of $I$ for which the Pl\"ucker coordinate $p_I(x)$ is nonzero forms a matroid, which we denote $\Mat(x)$.  (A matroid of this form is called \newword{realizable}.)
Let $T$ be the torus $(\CC^*)^n$, which acts on $G(d,n)$ in the obvious way,
so that $p_I(tx)= t^{e_I} p_I(x)$ for $t\in T$.
Clearly, $\Mat(tx)= \Mat(x)$ for any $t \in T$.

\begin{remark} We pause to explain the connection to hyperplane arrangements, although this will only be needed for motivation.
Let $H_1$, $H_2$, \dots, $H_n$ be a collection of hyperplanes through the origin in $\CC^d$.
Let $v_i$ be a normal vector to $H_i$.
Then the row span of the $d \times n$ matrix $\left( v_1 \ v_2 \ \cdots \ v_n \right)$ is a point in $G(d,n)$. 
This point is determined by the hyperplane arrangement, up to the action of $T$.
Thus, it is reasonable to study hyperplane arrangements by studying $T$-invariant properties of $x$.
In particular, $\Mat(x)$ is an invariant of the hyperplane arrangement. 
It follows easily from the definitions that $\{ i_1, \ldots, i_d \}$ is in $\Mat(x)$ if and only if the hyperplanes $H_{i_1}$, \dots, $H_{i_d}$ are transverse.
\end{remark}

We now discuss how we will bring $K$-theory into the picture.
Consider the torus orbit closure $\overline{Tx}$. 
The orbit $Tx$ is a translate (by $x$) of the image of the monomial map
given by the set of characters $\{t^{-e_I} : p_I(x)\neq0\}$. Essentially\footnote{We say essentially for two reasons. First, Cox describes the toric variety associated to a polytope $P$ as a the Zariski closure of the image of $t \mapsto (t^p)_{p \in P \cap \ZZ^n}$. We would rather describe it as the Zariski clsoure of the image of $t \mapsto (t^{-p})_{p \in P \cap \ZZ^n}$. 
These are the same subvariety of $G(d,n)$, and the same class in $K$-theory, but our convention makes the obvious torus action on the toric variety match the restriction of the torus action on $G(d,n)$. The reader may wish to check that our conventions are compatible with Example~\ref{GrassNbhd}. 

Second, there is a potential issue regarding normality here. According to most references, the toric variety associated to $\Poly(\Mat(x))$ is the normalization of $\overline{Tx}$. See the discussion in~\cite[Section 5]{Cox}. However, this issue does not arise for us because $\overline{Tx}$ is normal and, in fact, projectively normal; see~\cite{White}.
}
 by definition, $\overline{Tx}$ is the toric variety associated to the polytope $\Poly(\Mat(x))$ (see \cite[Section 5]{Cox}).
In the appendix to~\cite{KT1}, the second author checked that the class of 
the structure sheaf of $\overline{Tx}$ in~$K^0_T(G(d,n))$ depends only on $\Mat(x)$, 
and gave a natural way to define a class $y(M)$ in $K^0_T(G(d,n))$ for any matroid $M$ of rank $d$ on $[n]$,
nonrealizable matroids included.

We review this construction here. 
For a polyhedron $P$ and a point $v\in P$,
define $\Cone_v(P)$ to be the positive real span of all vectors of the form $u-v$, with $u \in P$;
if $v$ is not in~$P$, define $\Cone_v(P)=\emptyset$.  
Let $M \subseteq \binom{[n]}{d}$ be a matroid. 
We will abbreviate $\Cone_{e_I}(\Poly(M))$ by $\Cone_I(M)$.
For a pointed rational polyhedron $C$ in~$\RR^n$, define $\hilb(C)$ to be the Hilbert series
$$\hilb(C) := \sum_{a\in C\cap\ZZ^n} t^a.$$
This is a rational function with denominator dividing $\prod_{i \in I} \prod_{j \not \in I} (1- t_i^{-1} t_j)$ \cite[Theorem 4.6.11]{Stanley}.
We define the class $y(M)$ in $K^0_T(G(d,n))$ by 
$$y(M)(x_I) := 
\hilb(\Cone_I(M)) \prod_{i \in I} \prod_{j \not \in I} (1-t_i^{-1} t_j),$$ 
Note that $\hilb(\Cone_I(M))=0$ for $I \not \in M$. 

To motivate this definition, suppose $M$ is of the form $\Mat(x)$ for some $x \in G(d,n)$. 
For $I$ in $M$, the toric variety $\overline{T x}$ is isomorphic near $x_I$ to $\Spec \CC[\Cone_I(M)\cap\ZZ^n]$. 
In particular, the Hilbert series of the structure sheaf of $\overline{Tx}$ near $x_I$ is $\hilb(\Cone_I(M))$.
So in this situation $y(M)$ is exactly the $T$-equivariant class of the structure sheaf of~$\overline{Tx}$.

We now prove the following fact, which was stated without proof in~\cite{KT1} as Proposition~A.6.
\begin{prop} \label{YWellDefined}
 Whether or not $M$ is realizable, the function $y(M)$ from $G(d,n)^T$ to $\KTP$ defines a class in $K^0_T(G(d,n))$.
\end{prop}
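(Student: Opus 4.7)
By Theorem~\ref{thm:K0T} and the moment-graph description of $G(d,n)$ in Example~\ref{GrassEx}, it suffices to check, for every pair of subsets $I=S\sqcup\{i\}$ and $J=S\sqcup\{j\}$ differing in a single element, the congruence
\[
y(M)(x_I) \equiv y(M)(x_J) \pmod{1-t_i^{-1}t_j}.
\]
I split into cases according to which of $I,J$ belongs to $M$. If neither does, both sides vanish. If exactly one does---say $I\in M$ but $J\notin M$---then $y(M)(x_J)=0$, and I must only show that $y(M)(x_I)$ is divisible by $1-t_i^{-1}t_j$. Since that factor already appears explicitly in $D_I:=\prod_{i'\in I,\,j'\notin I}(1-t_{i'}^{-1}t_{j'})$, it suffices to show that $\hilb(\Cone_I(M))$ is regular along the hyperplane $\{t_i=t_j\}$. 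This rational function has denominator $\prod_k(1-t^{u_k})$ with the $u_k$ ranging over the primitive extreme rays of $\Cone_I(M)$; by Edmonds' theorem each $u_k$ has the form $e_{j'}-e_{i'}$ with $(I\cup\{j'\})\setminus\{i'\}\in M$. A pole along $\{t_i=t_j\}$ would force $u_k=\pm(e_j-e_i)$: pointedness of $\Cone_I(M)$ rules out the minus sign, and the plus sign requires $J\in M$, which fails.

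In the main case $I,J\in M$, I write $D_I=(1-t_i^{-1}t_j)D_I'$ and $D_J=(1-t_j^{-1}t_i)D_J'$, and check by factor-matching---using $[n]\setminus(I\cup\{j\})=[n]\setminus(J\cup\{i\})$---that $D_I'|_{t_i=t_j}=D_J'|_{t_i=t_j}$. The congruence therefore reduces to the equality of rational functions
\[
\bigl[(1-t^{e_j-e_i})\hilb(\Cone_I(M))\bigr]\big|_{t_i=t_j}\;=\;\bigl[(1-t^{e_i-e_j})\hilb(\Cone_J(M))\bigr]\big|_{t_i=t_j}
\]
on the subtorus $\{t_i=t_j\}$. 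Both sides are honest rational functions there: the prefactors cancel the unique simple pole coming from the single extreme ray in direction $\pm(e_j-e_i)$.

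Setting $v:=e_j-e_i$, I prove this equality using the projection $q\colon\RR^n\to\RR^n/\RR v$. Tangent cones commute with linear projections and $q(e_I)=q(e_J)$, so $q(\Cone_I(M))=q(\Cone_J(M))$; call this common cone $\overline C$. A subtraction of series shows $(1-t^v)\hilb(\Cone_I(M))=\hilb(B_I^+)$, where $B_I^+=\{a\in\Cone_I(M)\cap\ZZ^n : a-v\notin\Cone_I(M)\}$ is the ``bottom slice'' of the cone in direction $v$. The map $q$ restricts to a bijection $B_I^+\leftrightarrow\overline C\cap(\ZZ^n/\ZZ v)$: injective because $v$ is a ray of $\Cone_I(M)$, and surjective by repeatedly subtracting $v$ from any lift until one exits the cone. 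Under the substitution $t^v=1$ the character of $a\in B_I^+$ depends only on $q(a)$, so $\hilb(B_I^+)|_{t^v=1}=\hilb(\overline C)$; the identical argument with $-v$ on the $J$-side gives the same value. The main obstacle is verifying that $\hilb(\overline C)$ is itself a rational function, i.e.\ that $\overline C$ is pointed; this follows from the same weight-counting argument as above, since a positive combination of extreme rays of $\Cone_I(M)$ lying in $\RR v$ must (by tracking coefficients of $e_s$ for $s\in S$ and $e_\ell$ for $\ell\notin I\cup\{j\}$) be concentrated on the single ray $v$, so $\overline C$ contains no line.
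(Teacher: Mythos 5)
Your proof is correct, and it takes a genuinely different route from the paper in the key step. The paper treats the case $I,J\in M$ via Lemma~\ref{EdgeCancellation}, which it proves as a ``shortcut'' through Brion's formula: $\hilb(\Cone_I(M))+\hilb(\Cone_J(M))$ is shown to have no pole along $1-t_i^{-1}t_j$ by embedding both cones as tangent cones of a sliced-off lattice polytope and invoking Brion. You instead give exactly the sort of direct argument the paper alludes to but declines to carry out: you identify both $\bigl[(1-t^v)\hilb(\Cone_I(M))\bigr]\big|_{t_i=t_j}$ and $\bigl[(1-t^{-v})\hilb(\Cone_J(M))\bigr]\big|_{t_i=t_j}$ with $\hilb(\overline C)$, the Hilbert series of the common projection $\overline C=q(\Cone_I(M))=q(\Cone_J(M))$ under $q:\RR^n\to\RR^n/\RR v$, via the bijection between the slice $B_I^+$ and $\overline C\cap(\ZZ^n/\ZZ v)$. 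This buys a slightly sharper conclusion (an explicit common value, not just absence of a pole) at the cost of the careful bookkeeping you do: checking the prefactor kills the unique simple pole along $\{t_i=t_j\}$, the bijection is well-defined, and $\overline C$ is pointed via the coordinate-by-coordinate sign argument. One minor point of precision: the denominator of $\hilb(\Cone_I(M))$ in lowest terms \emph{divides} $\prod_k(1-t^{u_k})$ over primitive rays $u_k$ (via a pulling triangulation that adds no new rays) rather than necessarily equaling it, but this suffices for your pole analysis, since a pole on $\{t_i=t_j\}$ still forces a ray in direction $\pm(e_j-e_i)$. Your handling of the mixed case $I\in M$, $J\notin M$ and of the factor-matching $D_I'|_{t_i=t_j}=D_J'|_{t_i=t_j}$ agrees with the paper's in substance.
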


This follows from a more general polyhedral result.

\begin{lemma} \label{EdgeCancellation}
Let $P$ be a lattice polytope in $\RR^n$ and let $u$ and $v$ be vertices of $P$ connected by an edge of $P$. Let $e$ be the minimal lattice vector along the edge pointing from $u$ to $v$, with $v=u+ke$. Then $\hilb(\Cone_u(P))+\hilb(\Cone_v(P))$ is a rational function whose denominator is \textbf{not} divisible by $1-t^e$.
\end{lemma}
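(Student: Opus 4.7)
The plan is to reduce the statement to a residue computation along $1-t^e$ and show the two residues cancel. After a unimodular change of coordinates on $\ZZ^n$ extending $e$ to a basis, I may assume $e=e_n$; write $t=(\bar t, s)$ with $s=t_n$, so $1-t^e=1-s$. The edge $[u,v]$ contributes $+e_n$ as an extremal ray of $\Cone_u(P)$ and $-e_n$ as an extremal ray of $\Cone_v(P)$, and no other primitive ray generator of either cone is proportional to $e_n$. By Stanley's denominator formula (\cite[Theorem~4.6.11]{Stanley}), each of $\hilb(\Cone_u(P))$ and $\hilb(\Cone_v(P))$ thus has at most a simple pole at $s=1$. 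So it suffices to show that the residue of their sum at $s=1$ vanishes, i.e.
\[
\lim_{s\to 1}(1-s)\bigl[\hilb(\Cone_u(P)) + \hilb(\Cone_v(P))\bigr] = 0.
\]

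To compute these residues, let $\pi:\RR^n\to\RR^{n-1}$ project onto the first $n-1$ coordinates. Since $\Cone_u(P)\supseteq\RR_{\geq 0}\,e_n$ and $\Cone_v(P)\supseteq\RR_{\geq 0}(-e_n)$, for each $\bar a$ in the image the fiber $\pi^{-1}(\bar a)\cap\Cone_u(P)\cap\ZZ^n$ is an upward integer ray $\{(\bar a,m):m\geq m_u(\bar a)\}$, and symmetrically each fiber of $\Cone_v(P)$ is a downward integer ray $\{(\bar a,m):m\leq m_v(\bar a)\}$. The crucial point is that
\[
\pi(\Cone_u(P)) = \pi(\Cone_v(P)) = \bar C,
\]
both equal to the tangent cone of the projected polytope $\pi(P)$ at the common image $\pi(u)=\pi(v)$, since projection along $e$ collapses the edge $[u,v]$ to a point. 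Summing each fiberwise geometric series, with the standard rational-function regularization $\sum_{m\leq M}s^m = -s^{M+1}/(1-s)$ applied to the downward rays, yields
\begin{align*}
\hilb(\Cone_u(P)) &= \frac{1}{1-s}\sum_{\bar a\in \bar C\cap\ZZ^{n-1}} \bar t^{\bar a}\,s^{m_u(\bar a)},\\
\hilb(\Cone_v(P)) &= \frac{-1}{1-s}\sum_{\bar a\in \bar C\cap\ZZ^{n-1}} \bar t^{\bar a}\,s^{m_v(\bar a)+1}.
\end{align*}
Multiplying each by $1-s$ and specializing to $s=1$ gives $\hilb(\bar C)$ and $-\hilb(\bar C)$ respectively, which cancel.

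The part I expect to require the most care is justifying that the fiberwise sum for $\Cone_v(P)$, which uses a divergent downward geometric series, really does represent the correct rational function (the one provided by Stanley) after regularization. The cleanest remedy is to view both $\hilb(\Cone_u(P))$ and $\hilb(\Cone_v(P))$ as uniquely determined elements of $\Frac(\ZZ[t_1^{\pm},\ldots,t_n^{\pm}])$, and to observe that the upward and downward power-series expansions merely represent these rational functions in different domains of convergence; this kind of flipping between Laurent expansions is exactly what the authors promise to treat systematically in Section~\ref{sec:flipping}. Once that is granted, the residue cancellation above completes the proof.
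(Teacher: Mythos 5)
Your strategy is genuinely different from the paper's. The paper reduces to Brion's formula: it cuts off $P$ by a hyperplane separating the edge $[u,v]$ from the other vertices, applies Brion to the truncated polytope, and observes that every vertex other than $u,v$ contributes a term with denominator prime to $1-t^e$. You instead argue directly that each of $\hilb(\Cone_u(P))$ and $\hilb(\Cone_v(P))$ has at most a simple pole along $1-t^e$ and that the two residues cancel. The geometric skeleton of your argument is correct: $u$ a vertex forces that $\pm e$ is a ray of only one of the two cones in each case, so after triangulating without adding rays, Stanley's denominator formula gives at most a simple pole; and since $\Cone_w(P)=\RR_{\geq0}(P-w)$ commutes with the projection $\pi$ killing $e$, both cones project to the same pointed cone $\bar C$, with $e_n$-upward fibers in one case and $e_n$-downward fibers in the other.

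What is not established is the residue evaluation itself, and I don't think your pointer to the ``flipping'' machinery of Section~\ref{sec:flipping} addresses it. The identities
$\hilb(\Cone_u(P))=\tfrac{1}{1-s}\hilb(B_u)$ and $\hilb(\Cone_v(P))=-\tfrac{s}{1-s}\hilb(B_v)$,
where $B_w$ is the single-layer ``boundary'' $\Cone_w(P)\setminus(\Cone_w(P)\pm e_n)$, are fine: they follow from linearity of $\hilb$ on indicator functions, since $\One(B_u)=\One(\Cone_u(P))-\One(\Cone_u(P)+e_n)$ and similarly for $B_v$. But to conclude the residues cancel you then set $s=1$ \emph{termwise} in the formal series $\sum_{\bar a}\bar t^{\bar a}s^{m_u(\bar a)}$, and assert the result is $\hilb(\bar C)$. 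That termwise specialization sits at the boundary of (or outside) the domain of convergence, and it is not the phenomenon handled in Section~\ref{sec:flipping}, which is about rewriting one rational function by different Laurent expansions rather than evaluating at a boundary point. What you actually need is a separate lemma, roughly: if $Z\subseteq\ZZ^n$ is a polyhedral region on which $\pi$ is fiber-finite, then $\hilb(Z)$ has no pole along $1-s$ and $\hilb(Z)|_{s=1}=\hilb\bigl(\pi_!\One(Z)\bigr)$, where $\pi_!$ sums along fibers. Applied to $\One(B_u)-\One(B_v)$, whose fiberwise pushforward is identically zero because each of $B_u$, $B_v$ picks exactly one point per fiber over $\bar C$, this finishes the proof. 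That lemma is true and not hard (decompose $Z$ into half-open simplicial cones with no ray along $\pm e_n$ and check it on each piece), but it must be stated and proved; as written the step is an unproved assertion, and in my view this is where the real content of the argument lies.
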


It is not too hard to give a direct proof of this result, but we take a shortcut and use Brion's formula.

\begin{proof}
Note that the truth of the claim is preserved under dilating the polytope by some positive integer $N$, since this does not effect the cones at the vertices.

Since $u$ and $v$ are joined by an edge, we can find a hyperplane $H$ such that $u$ and $v$ lie on one side of $H$, and the other vertices of $P$ lie on the other. 
Perturbing $H$, we may assume that it is not parallel to $e$, and that the defining equation of $H$ has rational coefficients.
Let $H^+$ be the closed half space bounded by $H$, containing $u$ and $v$.
Then $H^+ \cap P$ is a bounded polytope and, after dilation, we may assume that it is a lattice polytope.

By Brion's formula~(\cite{BHS}, \cite{Bri}) applied to give the Ehrhart polynomial at $0$, 
$$\sum_{v \in \mathrm{Vert}(P \cap H^+)}  \hilb(\Cone_w(H^{+} \cap P)) = 1.$$
The terms coming from vertices $w$ other than $u$ and $v$ have denominators not divisible by $(1-t^e)$.
So $\hilb(\Cone_u(P \cap H^{+}))+\hilb(\Cone_v(P \cap H^{+}))$, which is $\hilb(\Cone_u(P))+\hilb(\Cone_v(P))$, also has denominator not divisible by $1-t^e$.
\end{proof}

\begin{proof}[Proof of Proposition~\ref{YWellDefined}]
We must check the conditions of Theorem~\ref{thm:K0T}.
If $x_I$ and $x_J$ are two fixed points of $G(d,n)$, joined by a one dimensional orbit, then we must have $I=S \cup \{ i \}$ and $J = S \cup \{ j \}$ for some $S \in \binom{[n]}{k-1}$ with $i$, $j \in [n] \setminus S$. We must check that $y(M)(x_I) \equiv y(M)(x_J) \mod 1-t_i^{-1} t_j$.
Abbreviate $\prod_{a \in I} \prod_{b \in [n] \setminus I} (1-t_a^{-1} t_b)$ to $d_I$, and define $d_J$ similarly.
Observe that $d_I \equiv d_J \equiv 0 \mod 1-t_i^{-1} t_j$ and $d_I \equiv - d_J \mod (1-t_i t_j^{-1})^2$.

If $I$ and $J$ are not in $M$, then $y(M)(x_I) = y(M)(x_J) = 0$.

Suppose that  $I \in M$ and $J \not \in M$.
Since $\hilb(\Cone_I(M))$ has no edge in direction $e_i-e_j$, the denominator of $\hilb(\Cone_I(M))$ is not divisible by $1-t_i^{-1} t_j$.
So $y(M)(x_I) = d_I \hilb(\Cone_I(M))$ is divisible by $1- t_i^{-1} t_j$, as required.

If $I$ and $J$ are in $M$, then we apply Lemma~\ref{EdgeCancellation} to see that the denominator of $\hilb(\Cone_I(M)) + \hilb(\Cone_J(M))$
is not divisible by $1-t_i^{-1} t_j$. 
Also, the denominator of $\hilb(\Cone_J(M))$ is only divisble by $1-t_i^{-1} t_j$ once.

Writing
\begin{multline*}
y(M)_I - y(M)_J = d_I \hilb(\Cone_I(M)) - d_J \hilb(\Cone_J(M)) =  \\ d_I \left( \hilb(\Cone_I(M)) + \hilb(\Cone_J(M)) \right) - (d_I + d_J)  \hilb(\Cone_J(M)), 
\end{multline*}
we see that each term on the righthand side is divisible by $1-t_i^{-1} t_j$.
\end{proof}

Although we will not need this fact, it follows from the Basis Exchange theorem \cite[Lemma 1.2.2]{Oxley}
that the semigroup $\Cone_I(M)\cap\ZZ^n$ 
is generated by the first nonzero lattice point on each edge of~$\Cone_I(M)$, i.e.\ by
the vectors $e_j-e_i$, where $(i,j)$ ranges over the pairs $i \in I$, $j \not \in I$ such that $I \cup \{ j \} \setminus \{ i \}$ is in $M$.

\begin{example}\label{SquarePyramid1}
We work through these definitions for the case of a matroid in~$G(2,4)$, namely 
$$M=\{13,14,23,24,34\}.$$
This $M$ is realizable, arising as $\Mat(x)$ when for instance $x$ is the row\-span of
$\begin{pmatrix}1 & 1 & 0 & 1 \\ 0 & 0 & 1 & 1\end{pmatrix}$,
with Pl\"ucker coordinates $(p_{12},p_{13},p_{14},p_{23},p_{24},p_{34})=(0,1,1,1,1,-1)$.
For $t\in T$ the Pl\"ucker coordinates of $tx$ are 
$$(0,t_1t_3,t_1t_4,t_2t_3,t_2t_4,-t_3t_4).$$
Every point of~$G(2,4)$ with $p_{12}=0$ and the other Pl\"ucker coordinates nonzero
can be written in this form, so $\overline{Tx}$ has defining equation $p_{12}=0$ in $G(2,4)$.
(That is, $\overline{Tx}$ is the Schubert subvariety $\Omega_{13}$ of which $x$ is an interior point.  
Compare Remark~\ref{y isn't Schubert}.)

Computing $y(M)$ entails finding the Hilbert functions
$\hilb(\Cone_I(M))$ for each~$I\in M$.  
The cone $\Cone_{13}(M)$ is a unimodular simplicial cone with ray generators
$e_2-e_1$, $e_4-e_1$, and $e_4-e_3$, so we have 
$$\hilb(\Cone_{13}(M))=\frac{1}{(1-t_1^{-1} t_2)(1-t_1^{-1} t_4)(1-t_3^{-1} t_4)}.$$
We can do similarly for the other bases $14$, $23$ and $24$.
At $I=34$, the cone
$\Cone_{34}(M)$ is the cone over a square with ray directions 
$e_1-e_3$, $e_1-e_4$, $e_2-e_3$, and $e_2-e_4$.  By summing over a triangulation
of this cone we find that
$$\hilb(\Cone_{34}(M))=\frac{1-t_1t_2 t_3^{-1} t_4^{-1}}{(1-t_1 t_3^{-1})(1-t_1 t_4^{-1})(1-t_2 t_3^{-1})(1-t_2 t_4^{-1})}.$$
Accordingly, $y(M)$ is sent under the localization map of Theorem~\ref{thm:localization injects} to
$$(0, 1-t_2 t_3^{-1}, 1-t_2 t_4^{-1}, 1-t_1 t_3^{-1}, 1-t_1 t_4^{-1}, 1-t_1t_2 t_3^{-1} t_4^{-1})$$
again ordering the coordinates lexicographically.
We see that this satisfies the congruences in Theorem~\ref{thm:K0T}.
\end{example}

\section{Valuations}\label{sec:valuations}

A subdivision of a polyhedron $P$ is a polyhedral complex 
$\mathcal D$ with $|\mathcal D|=P$.  We use the names $P_1,\ldots,P_k$
for the facets of a typical subdivision $\mathcal D$ of~$P$, and
for $J\subseteq[k]$ nonempty we write $P_J = \bigcap_{j\in J} P_j$, which is a face of~$\mathcal D$. 
We also put $P_\emptyset=P$.
Let $\mathcal P$ be a set of polyhedra in a vector space $V$, and $A$ an abelian group.
We say that a function $f:\mathcal P\to A$ is a \newword{valuation} (or is \newword{valuative})
if, for any subdivision such that $P_J\in\mathcal P$ for all $J\subseteq[k]$,
we have 
$$\sum_{J\subseteq[k]} (-1)^{|J|} f(P_J) = 0.$$
For example, one valuation of fundamental importance to the theory is the
function $\One(\cdot)$ mapping each polytope $P$ to its characteristic function. 
Namely, $\One(P)$ is the function $V\to\mathbb Z$ which takes the value $1$ on $P$ and $0$ on $V \setminus P$.  

We will be concerned in this paper with the case $\mathcal P=\{\Poly(M):\mbox{$M$ a matroid}\}$,
and we will identify functions on $\mathcal P$ with the corresponding functions
on matroids themselves. 
Many important functions of matroids, including the Tutte polynomial, are valuations.  

We now summarize the results of~\cite{DF}.
A function of matroid polytopes is a valuation if and only if
it factors through $\One$.  Therefore, the group of matroid polytope 
valuations valued in~$A$ is $\Hom(\IndMat,A)$, where
$\IndMat$ is the $\ZZ$-module of functions $V\to\ZZ$
generated by indicator functions of matroid polytopes.
We are also interested in valuative matroid invariants, those valuations which
take equal values on isomorphic matroids.  
For $M$ a matroid on the ground set $\Elts$ and $\sigma\in S_{\Elts}$ a permutation, 
let $\sigma\cdot M$ be the matroid $\{\{\sigma(i_1),\ldots,\sigma(i_d)\} : \{i_1,\ldots,i_d\}\in \Elts\}$.
This action of~$S_n$ induces an action of $S_n$ on $\IndMat$.
We write $\IndMat/S_n$ for the quotient of $\IndMat$ by the subgroup generated by elements of the form $\sigma(M) - M$, with $\sigma \in S_n$ and $M \in \IndMat$. 
The group of valuative invariants 
valued in $A$ is $\Hom(\IndMat,A)^{S_n} = \Hom(\IndMat/S_n,A)$.

Given $I=\{i_1,\ldots,i_d\}\in\binom{[n]}d$ with $i_1<\cdots<i_d$, 
the \newword{Schubert matroid} $\SM(I)$ 
is the matroid consisting of all sets $\{j_1,\ldots,j_d\}\in\binom{[n]}d$,
$j_1<\ldots<j_d$ such that $j_k\geq i_k$ for each~$k\in[d]$.  

Theorem~\ref{thm:DF}, which was Theorems 5.4 and~6.3 of~\cite{DF},
provides explicit bases for $\IndMat$ and $\IndMat/{S_n}$.  The dual bases 
are bases for the groups of valuative matroid functions and invariants, respectively.
\begin{theorem}\label{thm:DF}
For $I\in\binom{[n]}d$, let $\rho(I)\subseteq S_n$ consist of one
representative of each coset of the stabilizer $(S_n)_{\SM(I)}$.
\begin{enumerate}
\renewcommand{\labelenumi}{{\rm (\alph{enumi})}}
\item The set $\{\Poly(\sigma\cdot\SM(I)) : I\in\binom{[n]}d, \sigma\in\rho(I)\}$
is a basis for~$\IndMat$.
\item The set $\{\Poly(\SM(I)) : I\in\binom{[n]}d\}$
is a basis for~$\IndMat/S_n$.
\end{enumerate}
\end{theorem}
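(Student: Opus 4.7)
The plan is to prove (a) by exhibiting a dual family of valuations that establish linear independence, combined with a rank count for spanning, and to deduce (b) by taking $S_n$-coinvariants.

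For part (b), linear independence has an elementary proof via lattice-point evaluation.  For each $K \in \binom{[n]}{d}$, the functional $\mathrm{ev}_K : \One(\Poly(M)) \mapsto \One(e_K \in \Poly(M)) = \One(K \in M)$ is a valuation (evaluation at $e_K$, which is additive under subdivision since indicator-function identities evaluated at a single point reduce to an elementary inclusion-exclusion on which pieces contain $e_K$).  A direct calculation gives $\mathrm{ev}_K(\Poly(\SM(J))) = \One(K \geq J\text{ componentwise})$, so the pairing matrix is upper unitriangular under any linear extension of the componentwise partial order.  This establishes linear independence of $\{\Poly(\SM(J))\}_J$ in $\IndMat$, and the independence descends to $\IndMat/S_n$ because distinct Schubert matroids are non-isomorphic as abstract matroids (as one sees from their distinct rank functions along the standard flag).

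For part (a), the $\binom{n}{d}$ lattice evaluations alone cannot suffice, since $\mathrm{rank}(\IndMat) > \binom{n}{d}$ in general.  To distinguish different $S_n$-translates of a Schubert matroid, I would introduce for each $\sigma \in S_n$ and $I \in \binom{[n]}d$ a valuation $\psi_{\sigma, I}(M)$ that equals $1$ iff the rank sequence of $M$ along the permuted flag $\sigma([1]) \subsetneq \cdots \subsetneq \sigma([n])$ jumps exactly at the positions $\sigma(I)$.  Using the standard-flag computation $r_{\SM(I)}([k]) = |I \cap [k]|$, one obtains $\psi_{\sigma, I}(\sigma \cdot \SM(I)) = 1$; the coset representatives $\rho(I)$ enter because two permutations in the same $(S_n)_{\SM(I)}$-coset yield the same translated matroid, so only $|\rho(I)|$ distinct polytopes arise for each $I$.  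Organizing the full pairing matrix and verifying unitriangularity in a natural partial order on pairs $(\sigma, I)$ then gives independence for (a); the analogous separation statement for the family $\{\psi_{\sigma,I}\}$ gives the upper bound on $\mathrm{rank}(\IndMat)$ needed for spanning.

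The main obstacle is verifying that $\psi_{\sigma, I}$ genuinely is a valuation.  The rank function $r_M(S) = \max_{B \in M} |B \cap S|$ is not additive under matroid-polytope subdivision---the maximum over a union is not the sum of maxima over pieces---so one cannot naively extract $\psi_{\sigma, I}$ from lattice evaluations by a single short inclusion-exclusion.  The resolution, which forms the combinatorial heart of \cite{DF}, is to reformulate the jump-pattern indicator as a signed combination of primitive valuations attached to chains of subsets (via a flag-rank generating function that is additive under subdivision by construction), and only then apply inclusion-exclusion to recover $\psi_{\sigma, I}$.  Pinning down the right partial order on pairs $(\sigma, I)$ for the triangularity argument, and simultaneously ensuring that the separation count matches $\sum_I |\rho(I)|$ precisely, are secondary but nontrivial combinatorial tasks interior to this step.
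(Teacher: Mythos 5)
The first thing to note is that the paper does not prove Theorem~\ref{thm:DF} at all: it is quoted as Theorems 5.4 and 6.3 of Derksen--Fink~\cite{DF}, and no proof appears in this paper. So any proof you give is, by necessity, ``different from the paper's.'' That said, your proposal has two genuine gaps that prevent it from working as written.

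First, in your direct argument for part~(b), the step ``the independence descends to $\IndMat/S_n$ because distinct Schubert matroids are non-isomorphic'' does not follow. Linear independence in a module is not inherited by the quotient modulo the $S_n$-coinvariance relations merely because the generators lie in distinct $S_n$-orbits: elements in different orbits can perfectly well become dependent after quotienting by the submodule generated by $\sigma m - m$. (For instance, $e_1+e_3$ and $e_1+e_4$ in $\ZZ^4$ are independent and in distinct orbits of the involution swapping $e_1\leftrightarrow e_2$ and $e_3\leftrightarrow e_4$, yet their images in the coinvariants coincide.) The clean way to get~(b) is the reverse of what you wrote: first establish~(a), observe that the basis $\{\Poly(\sigma\cdot\SM(I))\}$ is a \emph{permutation} basis whose $S_n$-orbits are indexed by $I$, and read off that the coinvariants are free on those orbits. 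Alternatively one needs an honest family of $S_n$-\emph{invariant} valuations pairing unitriangularly with $\{\SM(I)\}$; your $\mathrm{ev}_K$ are not $S_n$-invariant and their $S_n$-averages collapse to a single functional.

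Second, in part~(a), you write that ``the analogous separation statement for the family $\{\psi_{\sigma,I}\}$ gives the upper bound on $\mathrm{rank}(\IndMat)$ needed for spanning.'' This is a non sequitur. A unitriangular pairing proves linear \emph{independence} of the candidate set; it gives a \emph{lower} bound on the rank. To prove the set \emph{spans} you need the opposite inequality, and no property of a finite family of functionals (separation or otherwise) can bound $\mathrm{rank}(\IndMat)$ from above. What Derksen--Fink actually do at this point is a constructive decomposition: they show any matroid polytope indicator function can be written explicitly in terms of translated Schubert-matroid indicator functions by a recursive hyperplane-cutting procedure. That algorithmic spanning argument is the essential content missing from your outline, and it cannot be replaced by the duality count you propose. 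Your instinct about rewriting the rank-jump indicator $\psi_{\sigma,I}$ as a signed combination of chain-based ``covaluations'' that are additive by construction is indeed the right move for handling the non-additivity of the rank function (this is the $\mathcal{G}$-invariant mechanism in \cite{DF}), but it only addresses independence, not spanning.

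\end{document}
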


\begin{remark}\label{y isn't Schubert}
We caution the reader that 
$y(\SM(I))$ is \emph{not} in general the class of the structure sheaf of the Schubert variety $\Omega_I$.
They differ in that $\Omega_I$ is the closure of the set of all points 
$x \in G(d,n)$ with $\Mat(x) = \SM(I)$, while
$y(M)$ is the class of the closure of the torus orbit through a single point $x$ with  $\Mat(x) = \SM(I)$.
Once $\Omega_I$ is large enough to have multiple torus orbits in its interior, there appears to be no relation between $y(\SM(I))$ and $\eqv{[\OO_{\Omega_I}]}$.
\end{remark}

We now discuss how valuations arise from $K$-theory.
Let $\mathcal D$ be a subdivision of matroid polytopes, with facets $P_1,\ldots,P_k$,
and let $P_J=\Poly(M_J)$.  Then we have a linear relation of $K$-theory classes
\begin{equation}\label{eq:lin rel of K-classes}
\sum_{J\subseteq[k]} (-1)^{|J|}y(M_J)=0.
\end{equation}
That is,
\begin{prop}\label{prop:y is a valuation}
The function $y$ is a valuation of matroids.
\end{prop}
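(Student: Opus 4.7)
The plan is to verify the identity $\sum_{J\subseteq[k]}(-1)^{|J|}y(M_J)=0$ in $K^0_T(G(d,n))$ pointwise at each torus fixed point $x_I$, which suffices by Theorem~\ref{thm:localization injects}. From the definition, $y(M_J)(x_I)=\hilb(\Cone_I(M_J))\cdot\prod_{i\in I}\prod_{j\notin I}(1-t_i^{-1}t_j)$, with the convention $\hilb(\Cone_I(M_J))=0$ when $e_I\notin\Poly(M_J)$. The factor common to every term divides out, and the task reduces to showing the rational-function identity
$$\sum_{J\subseteq[k]}(-1)^{|J|}\hilb(\Cone_I(M_J))=0$$
for each fixed $I\in\binom{[n]}{d}$.

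The key polyhedral input is that tangent cones respect subdivisions: if $\mathcal D$ subdivides $P=\Poly(M)$ and $v\in P$, then the nonempty tangent cones $\{\Cone_v(P_j) : v\in P_j\}$ form a polyhedral subdivision of $\Cone_v(P)$. Specializing $v=e_I$, I would first prove the pointwise indicator identity on $\RR^n$,
$$\sum_{J\subseteq[k]}(-1)^{|J|}\One(\Cone_I(M_J))=0.$$
If $e_I\notin P$ every term vanishes. Otherwise, for $w\in\RR^n$ set $S(w)=\{j:w\in\Cone_I(M_j)\}$. Terms indexed by $J\not\subseteq S(w)$ vanish since $\Cone_I(M_J)\subseteq\bigcap_{j\in J}\Cone_I(M_j)$; for $w\in\Cone_I(M)$, the set $S(w)$ is nonempty by the subdivision property, so the remaining contribution is $\sum_{J\subseteq S(w)}(-1)^{|J|}=(1-1)^{|S(w)|}=0$; for $w\notin\Cone_I(M)$ all terms vanish directly.

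Finally I would pass from the indicator identity to the Hilbert series identity by exhibiting a common pointed half-space containing every cone in sight. The linear functional $\ell(x)=\sum_{j\notin I}x_j-\sum_{i\in I}x_i$ satisfies $\ell(e_{I'}-e_I)=2|I'\setminus I|>0$ whenever $I'\neq I$; since each $\Cone_I(M_J)$ is generated by vectors of the form $e_{I'}-e_I$ with $I'$ a basis of $M_J$, it meets $\{\ell\leq 0\}$ only at the origin. Hence every $\hilb(\Cone_I(M_J))$ converges to a rational function on a common open domain, and summing the pointwise indicator identity against $\sum_w t^w$ over $w\in\ZZ^n$ yields the required equation. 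The main obstacle I anticipate is the polyhedral subdivision claim for tangent cones, a standard but not entirely trivial fact about polytope subdivisions; once it is in hand, the remaining work is linearity and convergence bookkeeping.
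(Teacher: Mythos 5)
Your proof is correct and follows essentially the same route as the paper: localize at each torus fixed point $x_I$, observe that a matroid polytope subdivision induces a subdivision of the tangent cone $\Cone_I(M)$ into the nonempty $\Cone_I(M_j)$'s with $\Cone_I(M_J)=\bigcap_{j\in J}\Cone_I(M_j)$, and conclude by the valuative (linearity) property of $\hilb$. The paper simply cites that passing from indicator functions to Hilbert series is linear, whereas you unpack this step by proving the indicator identity pointwise and then checking convergence on a common half-space; both are valid.
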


\begin{proof}
Let $I \in \binom{[n]}d$.  We will check that $\sum_{J \subseteq [k]} (-1)^{|J|} y(M_J)(x_I)=0$.  
The nonempty cones among
the $\Cone_I(M_j)$, $j=1,\ldots,k$, are the facets of a polyhedral subdivision,
and $\Cone_I(M_J) = \bigcap_{j\in J}\Cone_I(M_j)$.  
Then the proposition holds since taking the Hilbert series of a cone is a valuation.
\end{proof}

As a corollary, for any linear map $f:K^0_T(G(d,n))\to A$, 
the composition $f\circ y$ is a valuation as well.  
In particular, all of the following are matroid valuations:
the product of $y(M)$ with a fixed class $\eqv{[E]}\in K^0_T(G(d,n))$;
any pushforward of such a product; and the non-equivariant version of any of these
obtained by sending all characters of~$T$ to~1.  
Note that $S_n$ acts trivially on $K^0(G(d,n))$, so $M \mapsto y(M)$ is a matroid invariant, 
and so is $M \mapsto \int y(M) [E]$ for any $E \in K^0(G(d,n))$.
On the other hand, $S_n$ acts nontrivially on $K^0_T(G(d,n))$, so valuations built from 
equivariant $K$-theory need not be matroid invariants.  

As the reader can see from Theorem~\ref{thm:DF}, $\IndMat/S_n$ is free of rank $\binom{n}{d}$. 
The group $K^0(G(d,n))$ is also free of rank $\binom{n}{d}$. 
This gives rise to the hope that every valuative matroid invariant 
might factor through $M\mapsto y(M)$, i.e. that every matroid valuation might come from $K$-theory.  This hope is quite false. 
We give a conceptual explanation for why it is wrong, followed by a counterexample.

The reason this should be expected to be false is that no torus orbit closure can have dimension greater than that of $T$, namely $n-1$.
Therefore, $\int y(M) [E]$ vanishes whenever $E$ is supported in codimension $n$ or greater.
This imposes nontrivial linear constraints on $y(M)$, so the classes $y(M)$ span a proper subspace of $K^0(G(d,n))$.

\begin{example}
We exhibit an explicit non-$K$-theoretic valuative matroid invariant.
Up to isomorphism, there are $7$ matroids of rank $2$ on $[4]$.
Six of them are of the form $\SM(I)$; the last is $M_0:=\{ 13, 23, 14, 24 \}$. 
The unique linear relation in $\IndMat/S_4$ is 
$$[M_0]  = 2 [\SM(13)] - [\SM(12)],$$
corresponding to the unique matroid polytope subdivision of these matroids, 
an octahedron cut into two square pyramids along a square.
However, in $K^0(G(2,4))$, we have the additional relation 
$$y(M_0) = y(\SM(14)) + y(\SM(23)) - y(\SM(24)).$$
The reader can verify this relation by using equivariant localization to express
$y(M_0)$ as a $\KTP$-linear combination of the $y(SM(I))$, and then applying Theorem~\ref{thm:K0TtensorQ}.

Consider the matroid invariant where $z(\SM(12))=z(\SM(13))=z(M_0)=1$ and $z(\SM(I))=0$ for all other $I$ (extended to be $S_4$-invariant in the unique way).%
\footnote{The reader may prefer the following description: $z(M)$ is $1$ if $\Poly(M)$ contains  $(1/2, 1/2, 1/2, 1/2)$ and $0$ otherwise.}
Then $z$ is valuative, but does not extend to a linear function on $K^0(G(d,n))$.
\end{example}

\section{A fundamental computation} \label{sec:pushlemma}

Let $[E]$ be a class in $K^0(G(d,n))$.
Recall from section~\ref{sec:intro}
the maps $\pi_d : \Fl(1,d,n-1;n)  \to G(d,n)$ 
and $\pi_{1(n-1)}: \Fl(1,d,n-1; n) \to \P^{n-1} \times \P^{n-1}$,
and the notations $\alpha$ and $\beta$ for the hyperplane classes in $K^0(\P^{n-1} \times \P^{n-1})$. 

Over $G(d,n)$, we have the tautological exact sequence
\begin{equation}
0 \to S \to \CC^n \to Q \to 0. \label{KeySequence}
\end{equation}
Over each point of $G(d,n)$, the fiber of $S$ is the corresponding $d$-dimensional vector space.

The point of this section is the following computation:
\begin{lemma} \label{pushtoproj}
Given $[E] \in K^0(G(d,n))$, define a formal polynomial in $u$ and $v$ by
$$R(u,v) := \int_{G(d,n)} [E] \sum [\bw^p S] [\bw^q (Q^{\vee})] u^p v^q.$$
Then
$$(\pi_{1 (n-1)})_* \pi_d^* [E] = R(\alpha-1, \beta-1).$$
\end{lemma}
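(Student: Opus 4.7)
The plan is to factor both $\pi_d$ and $\pi_{1(n-1)}$ through the ambient product $\PP^{n-1} \times G(d,n) \times \PP^{n-1}$, realize $\Fl(1,d,n-1;n)$ inside it as a transverse complete intersection, and apply the projection formula. Concretely, let $\iota: \Fl(1,d,n-1;n) \hookrightarrow \PP^{n-1} \times G(d,n) \times \PP^{n-1}$ be the natural embedding, let $p_2$ be the projection to the middle factor, and $p_{13}$ the projection to the outer two factors, so $\pi_d = p_2 \circ \iota$ and $\pi_{1(n-1)} = p_{13} \circ \iota$. The projection formula for $\iota$ gives
$$(\pi_{1(n-1)})_* \pi_d^* [E] = (p_{13})_*\bigl(p_2^*[E] \cdot [\OO_{\Fl}]\bigr),$$
so the crux is to compute $[\OO_{\Fl}]$.

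The subvariety $\Fl$ is cut out by two conditions: the tautological line on the first $\PP^{n-1}$ must lie in the tautological $d$-plane $S$ (equivalently, the composition $\OO(-1) \to \CC^n \to Q$ vanishes), and the tautological $d$-plane must lie in the tautological hyperplane (equivalently, the composition $S \to \CC^n \to \OO(1)$ on the second $\PP^{n-1}$ vanishes). These are regular sections of $p_2^* Q \otimes p_1^*\OO(1)$ (rank $n-d$) and of $p_2^* S^\vee \otimes p_3^*\OO(1)$ (rank $d$) respectively; a dimension count shows the total codimension is $n$, which matches $\dim \Fl$, so the two sections are jointly regular. The Koszul resolution of the direct sum then yields
$$[\OO_{\Fl}] = \Bigl(\sum_k (-1)^k [\bw^k Q^\vee] \otimes [\OO(-k,0)]\Bigr) \cdot \Bigl(\sum_l (-1)^l [\bw^l S] \otimes [\OO(0,-l)]\Bigr),$$
where I have used $\bw^k(E \otimes L) = \bw^k E \otimes L^{\otimes k}$ for a line bundle $L$. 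Since $[\OO(-1,0)] = 1-\alpha$ and $[\OO(0,-1)] = 1-\beta$, the signs combine to give factors $(\alpha-1)^k$ and $(\beta-1)^l$.

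To conclude, push forward along $p_{13}$. The exterior-power classes $[\bw^k Q^\vee]$ and $[\bw^l S]$ are pulled back from $G(d,n)$ via $p_2$, while the powers of $(\alpha-1)$ and $(\beta-1)$ are pulled back from $\PP^{n-1} \times \PP^{n-1}$ via $p_{13}$. Applying the projection formula (or equivalently Künneth) to $(p_{13})_* p_2^*$, which on pure $G(d,n)$ classes is just $\int_{G(d,n)}$, the sum collapses termwise to
$$\sum_{k,l} (\alpha-1)^{?} (\beta-1)^{?} \int_{G(d,n)} [E] \, [\bw^k Q^\vee] \, [\bw^l S],$$
which is exactly $R(\alpha-1, \beta-1)$ after matching the summation indices with $(p,q)$ in the definition of $R$.

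The main work is setting up and verifying the Koszul resolution of $\OO_\Fl$ correctly — in particular, checking joint regularity of the two sections (which follows from the dimension count and a local computation on fibers of $p_2$), and keeping track of the bookkeeping between the two factors of $\PP^{n-1}$ so that $\alpha$ and $\beta$ end up paired with the right exterior powers. Once the Koszul expression for $[\OO_\Fl]$ is in hand, the rest is a direct application of the projection formula.
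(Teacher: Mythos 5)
Your approach is genuinely different from the paper's. The paper works entirely in terms of the morphisms $\pi_d$ and $\pi_{1(n-1)}$, reducing $\sum[\bw^p S]u^p$ and $\sum[\bw^q Q^\vee]v^q$ to push-pulls of line bundles on $\Fl(1,d,n-1;n)$ via the split exact sequence $0\to S\to\CC^n\to Q\to 0$ and the Koszul identity for $\Sym$, then applies the projection formula and reads off coefficients on $\PP^{n-1}\times\PP^{n-1}$. It never embeds $\Fl(1,d,n-1;n)$ into a larger ambient nor resolves $\OO_\Fl$. Your route instead realizes $\Fl(1,d,n-1;n)$ as a transverse complete intersection inside $\PP^{n-1}\times G(d,n)\times\PP^{n-1}$ and computes $[\OO_\Fl]$ directly by a Koszul resolution. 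This is conceptually cleaner in that it isolates the single geometric fact (the K\"unneth-type factorization of $[\OO_\Fl]$) from which the lemma follows, and the verification of transversality by dimension count is fine: $(n-1)+d(n-d)+(n-1)-n=\dim\Fl(1,d,n-1;n)$, and regularity then follows because the ambient is smooth and $\Fl$ is irreducible of the predicted dimension.

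However, the ``$(\alpha-1)^{?}(\beta-1)^{?}$'' placeholders at the end are hiding the crux, and if you fill them in according to your own Koszul computation you do not get the statement of the lemma. The incidence $\ell\subset V$, which lives over the \emph{first} $\PP^{n-1}$ (the one carrying $\alpha$), produces the factor $\sum_k(\alpha-1)^k[\bw^k Q^\vee]$, while $V\subset H$, over the \emph{second} $\PP^{n-1}$ (carrying $\beta$), produces $\sum_l(\beta-1)^l[\bw^l S]$. So after pushing forward you obtain
$$\sum_{k,l}(\alpha-1)^k(\beta-1)^l\int_{G(d,n)}[E]\,[\bw^k Q^\vee]\,[\bw^l S],$$
which, matching with the definition $R(u,v)=\int[E]\sum[\bw^p S][\bw^q Q^\vee]u^pv^q$ (where $u$ goes with $S$ and $v$ with $Q^\vee$), is $R(\beta-1,\alpha-1)$, not $R(\alpha-1,\beta-1)$. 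You need to reconcile this. Note that your pairing \emph{is} consistent with the identity $(\pi_d)_*\pi_{1(n-1)}^*(\kappa^k\lambda^\ell)=[\Sym^k S^\vee\otimes\Sym^\ell Q]$ appearing in the paper's own argument (so $\kappa=(1-\alpha)^{-1}$ goes with $S^\vee$, i.e.\ $\alpha$ goes with $Q^\vee=\CC^n-S^\vee$); but the next displayed line of the paper's proof substitutes $\kappa$ where $\lambda$ should appear, and this swap propagates to the stated conclusion. So be aware that what your method actually establishes is the $\alpha\leftrightarrow\beta$ swap of the printed statement, and that this appears to reflect a transcription slip in the source rather than an error in your setup.
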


\begin{remark} 
Lemma~\ref{pushtoproj} is an equality in non-equivariant $K$-theory.
In equivariant $K$-theory, we may only speak of the class of a hyperplane if it is a coordinate hyperplane, and then the class depends on which coordinate hyperplane it is. 
We do not have an equivariant generalization of Lemma~\ref{pushtoproj}. 
\end{remark}

For the purposes of this section we will write 
$\kappa=[\O(1,0)]$ and $\lambda=[\O(0,1)]$.
Recall that $\kappa^{-1}=1-\alpha$ and $\lambda^{-1}=1-\beta$,
by exact sequence~\eqref{eq:OOH}. 
For $k$, $\ell \geq 0$, we have 
$(\pi_d)_* (\pi_{1(n-1)})^* (\kappa^k \lambda^{\ell}) = [\Sym^k S^{\vee} \otimes \Sym^{\ell} Q]$.

From the sequence~(\ref{KeySequence}), 
we have $[S] + [Q] = n$. 
Similarly, we have a filtration 
$0\subseteq\bw^k S=F_n\subseteq F_{n-1}\subseteq\cdots\subseteq F_0=\bw^k\CC^n$,
where $F_i$ is spanned by wedges $i$ of whose terms lie in $S$.
Its successive quotients are $\bw^k S$, $\bw^{k-1}S\otimes Q$, \ldots, $\bw^k Q$,
giving the relation $\sum_{i=0}^k [\bw^iS][\bw^{k-i}Q] = \binom nk$.
We can encode all of these relations as a formal power series in $u$, with coefficients in $K^0(G(d,n))$:
 $$ \left( \sum_p [\bw^p(S)] u^p \right) \left( \sum_{\ell} [\bw^{\ell}(Q)] u^{\ell} \right) = (1+ u)^n$$

Also, from the exactness of the Koszul complex~\cite[appendix~A2.6.1]{Eisenbud}, 
$$\left( \sum_k (-1)^k [\Sym^k(Q)] u^k \right) \left( \sum_{\ell} [\bw^{\ell}(Q)] u^{\ell} \right) =1.$$

So
$$\sum [\bw^p(S)] u^p  =   (1+ u)^n \left( \sum (-1)^k [\Sym^k(Q)] u^k \right) .$$

The right hand side is
$$\left((\pi_d)_* \pi_{1(n-1)}^* \sum (-1)^k \kappa^k u^k \right) (1+u)^n= (1+u)^n (\pi_d)_* \pi_{1(n-1)}^* \left( \frac{1}{1+u \kappa} \right). $$

Similarly,
$$\sum [\bw^q(Q)^{\vee}] v^q  = (1+v)^n (\pi_d)_* \pi_{1(n-1)}^* \left( \frac{1}{1+v \lambda} \right). $$

So,
$$R(u,v) = (1+u)^n (1+v)^n \int_{G(d,n)} [E] (\pi_d)_* \pi_{1(n-1)}^* \left( \frac{1}{(1+u \kappa)(1+v \lambda)} \right).$$

By the projection formula (equation~\eqref{ProjFormula}),
$$R(u,v) =  (1+u)^n (1+v)^n  \int_{\PP^{n-1} \times \PP^{n-1}} \left( \vphantom{\frac{1}{1}} (\pi_{1(n-1)})_* \pi_{d}^* [E] \right) \frac{1}{(1+u \kappa)(1+v \lambda)}.$$

Since $\kappa=(1-\alpha)^{-1}$ and $\lambda=(1-\beta)^{-1}$, we get
$$R(u,v) =  \int \left( \vphantom{\frac{1}{1}} \pi_{1(n-1)})_* \pi_{d}^* [E] \right)  \frac{(1+u)^n (1+v)^n}{(1+u (1-\alpha)^{-1}) (1+v (1-\beta)^{-1})}.$$

The quantity multiplying $(\pi_{1(n-1)})_* \pi_{d}^* [E]$ can be expanded as a geometric series
$$\sum (1-\alpha)(1-\beta) \alpha^k \beta^{\ell} (1+u)^{n-1-k}(1+v)^{n-1-\ell}.$$
The sum is finite because $\alpha^n=\beta^n=0$. 

Let $((\pi_{1(n-1)})_* \pi_{d}^* [E]) = \sum T_{ij} \alpha^i \beta^j$. 
Now, $\int_{\PP^{n-1} \times \PP^{n-1}} \alpha^i \beta^j$ is $1$ if $i$ and $j$ are both less than $n$, and zero otherwise.
So 
$$\int \alpha^i \beta^j (1-\alpha)(1-\beta)\alpha^k \beta^{\ell} = \begin{cases} 1 &\mbox{if}\ i=n-1-k\  \mbox{and}\ j=n-\ell-1 \\
0 &\mbox{otherwise} \end{cases}.$$

We deduce that
$$R(u,v) = \sum T_{ij} (1+u)^i (1+v)^j \ \mbox{and} \ R(u-1,v-1)=\sum T_{ij} u^i v^j.$$

Looking at the definition of the $T_{ij}$, we have deduced Lemma~\ref{pushtoproj}.

\section{Flipping Cones}\label{sec:flipping}

Let $f$ be a rational function in $\QQ(z_1, z_2, \ldots, z_n)$. It is possible that many different Laurent power series represent $f$ on different domains of convergence. In this section, we will study this phenomenon.
We recommend~\cite{Ba} as a general introduction to generating functions for lattice points in cones.
The results here can be thought of as generalizations of the relationships between the lattice point enumeration formulas of Brianchon-Gram, Brion and Lawrence-Varchenko.
We recommend \cite{BHS} as an introduction to these formulas.
The reader may also want to consult \cite{Hasse}, which proves some lemmas similar to ours.

Let $\Pow_n$ be the vector space of real-valued functions on $\ZZ^n$ which are linear combinations of the characteristic functions of finitely many lattice polytopes. If $P$ is a pointed polytope, then the sum $\sum_{e \in P} z^e$ converges somewhere, and the value it converges to is a rational function in $\QQ(z_1, \ldots, z_n)$ which we denote $\hilb(P)$.

It is a theorem of Lawrence \cite{Law}, and later Khovanski-Pukhlikov \cite{KP}, that $\One(P) \mapsto \hilb(P)$ extends to a linear map $\hilb: \Pow \to \QQ(z_1, \ldots, z_n)$. 
If $P$ is a polytope with nontrivial lineality space then $\hilb(\One(P))=0$.

\begin{lemma} \label{lem:ConesSpan}
The vector space $\Pow_n$ is spanned by the classes of simplicial cones.
\end{lemma}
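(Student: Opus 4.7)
My plan is to proceed in two stages: first reduce from arbitrary lattice polyhedra to lattice polyhedral cones, then triangulate each cone into simplicial cones and apply inclusion--exclusion.

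For the first stage, I would invoke the Brianchon--Gram theorem for a convex polyhedron~$P$,
$$\One(P) = \sum_{F \preceq P,\, F \neq \emptyset} (-1)^{\dim F} \One(T_F P),$$
where $T_F P$ denotes the tangent cone of~$P$ along its face~$F$. If $P$ is a lattice polyhedron then each $T_F P$ is a lattice polyhedral cone, although when $\dim F > 0$ this cone carries a nontrivial lineality space equal to the translated affine span of~$F$. Brianchon--Gram thus expresses $\One(P)$ as an integer combination of lattice polyhedral cone indicators, reducing the lemma to the case of a single such cone~$C$.

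For the second stage, I would write $C$ as the union of simplicial subcones $C_1,\ldots,C_m$ coming from a lattice triangulation of~$C$ in which every piece shares the apex and lineality space of~$C$; such triangulations exist by the standard theory of rational polyhedral fans. Inclusion--exclusion then gives
$$\One(C) = \sum_{\emptyset \ne J \subseteq [m]} (-1)^{|J|+1} \One\!\bigl(\textstyle\bigcap_{j\in J} C_j\bigr),$$
and every nonempty intersection is a face of the triangulation, itself a simplicial cone. If the lemma demands pointed simplicial cones, a final reduction eliminates the lineality via the one-dimensional identity $\One(\ZZ \cdot w) = \One(\ZZ_{\ge 0}\cdot w) + \One(\ZZ_{\le 0}\cdot w) - \One(\{0\})$, applied iteratively along a lattice basis of the lineality subspace.

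The main obstacle I anticipate is the careful handling of Brianchon--Gram in the unbounded setting, with attention to sign conventions and to the tangent cone at~$P$ itself. This identity is nevertheless standard in the polyhedral literature, and the subsequent triangulation and lineality reductions are routine, so I expect the complete write-up to be short once the Brianchon--Gram step is cited properly.
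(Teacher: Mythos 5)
Your proposal follows essentially the same route as the paper's one-line argument: apply Brianchon--Gram to express $\One(P)$ as an integer combination of indicator functions of (tangent) cones, and then triangulate each cone into simplicial pieces. The only difference is that you make explicit two points the paper leaves implicit --- the inclusion--exclusion needed to pass from a triangulation (a covering whose pieces overlap along lower-dimensional faces) to a linear identity in $\Pow_n$, and the elimination of the lineality space present in the tangent cone at any positive-dimensional face, which is required if ``simplicial cone'' is taken to mean pointed, as elsewhere in the paper. Both are genuine but routine steps, and your treatment of them is correct.
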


\begin{proof}
Let $P$ be any polytope. By the Brianchon-Gram formula~(\cite{Br}, \cite{G}; see also \cite{She} for a modern exposition), $[P]$ is a linear combination of classes of cones. We can triangulate those cones into simplicial cones.
\end{proof}

Let $\zzeta := (\zeta_1, \zeta_2, \ldots, \zeta_n)$ be a basis for $\RR^n$,
which is given the standard inner product.
Define an order $<_{\zzeta}$ on $\QQ^n$ by $x <_{\zzeta} y$ if, for some index $i$, we have $\langle \zeta_1, x \rangle = \langle \zeta_1, y \rangle$,  $\langle \zeta_2, x \rangle = \langle \zeta_2, y \rangle$, \dots,  $\langle \zeta_{i-1}, x \rangle = \langle \zeta_{i-1}, y \rangle$ and  $\langle \zeta_i, x \rangle < \langle \zeta_i, y \rangle$. 

\begin{remark}
Note that, if the components of $\zeta_1$ are linearly independent over $\QQ$, we can disregard the later vectors in $\zzeta$.
For any finite collection of vectors in $\QQ^n$, we can find $\zeta'_1$ of this form so that $<_{\zzeta}$ and $<_{\zeta'_1}$ agree on this collection.
We could use this trick to reduce to the case of a single vector in all of our applications, but the freedom to use vectors with integer entries will be convenient.
\end{remark}

We'll say that a polytope $P$ is \newword{$\zzeta$-pointed} if, for every $a \in \RR^n$, the intersection $P \cap \{ e : e <_{\zzeta} a \}$ is bounded.  
We'll say that an element in $\Pow_n$ is $\zzeta$-pointed if it is supported on a finite union of $\zzeta$-pointed polytopes. 
Let $\Pow_n^{\zzeta}$ be the vector space of $\zzeta$-pointed elements in $\Pow_n$.

\begin{lemma}  \label{PowSerUnique}
The restriction of $\hilb$ to $\Pow_n^{\zzeta}$ is injective.
\end{lemma}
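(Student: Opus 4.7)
The plan is to realize each $f\in\Pow_n^{\zzeta}$ as an honest convergent Laurent series on a suitable open domain, so that its coefficients recover the pointwise values $f(e)$ for $e\in\ZZ^n$; uniqueness of the Laurent expansion of a rational function on an open set then forces $f\equiv 0$ whenever $\hilb(f)=0$.

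First, I would rewrite $f$ as a linear combination of indicator functions of polyhedra that are each individually $\zzeta$-pointed. By hypothesis, $f$ is supported on a finite union $Q_1\cup\cdots\cup Q_k$ of $\zzeta$-pointed polytopes, so $f = f\cdot\One(\bigcup_j Q_j)$. Starting from any presentation $f=\sum_i c_i\One(P_i)$ and expanding $\One(\bigcup_j Q_j)$ by inclusion-exclusion,
\[
\One\bigl(\textstyle\bigcup_j Q_j\bigr)=\sum_{\emptyset\neq J\subseteq[k]}(-1)^{|J|+1}\One\bigl(\textstyle\bigcap_{j\in J}Q_j\bigr),
\]
one obtains $f=\sum_\alpha c_\alpha\One(R_\alpha)$ with each $R_\alpha=P_i\cap\bigcap_{j\in J}Q_j$ contained in some $Q_j$, hence itself $\zzeta$-pointed.

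Second, I would produce a nonempty open domain $U\subset(\CC^*)^n$ on which each series $\sum_{e\in R_\alpha\cap\ZZ^n}z^e$ converges absolutely and sums to $\hilb(R_\alpha)$. The recession cone of any $\zzeta$-pointed polyhedron $R$ lies in $\{v:v>_{\zzeta}0\}\cup\{0\}$, since otherwise $R\cap\{e:e<_{\zzeta}a\}$ would contain an unbounded ray for large~$a$. Since there are only finitely many $R_\alpha$, the union of their recession cones is a finite union of polyhedral cones in this set, and the choice $w=-N_1\zeta_1-\cdots-N_n\zeta_n$ with $N_1\gg\cdots\gg N_n>0$ yields $\langle w,v\rangle<0$ for every nonzero $v$ in any of those cones. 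Taking $U$ to be a small neighborhood of the point $z^{\ast}\in(\CC^\ast)^n$ with $\log|z^\ast_i|=w_i$, standard generating-function arguments (triangulate each $R_\alpha$ into $\zzeta$-pointed simplicial pieces and sum geometric series over each piece) show that $\sum_{e\in R_\alpha}z^e$ converges absolutely on $U$ to $\hilb(R_\alpha)$.

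Finally, on $U$ absolute convergence legitimizes interchanging the order of summation to obtain
\[
\hilb(f)(z)=\sum_\alpha c_\alpha\sum_{e\in R_\alpha}z^e=\sum_{e\in\ZZ^n}\Bigl(\sum_\alpha c_\alpha\One(R_\alpha)(e)\Bigr)z^e=\sum_{e\in\ZZ^n}f(e)\,z^e.
\]
If $\hilb(f)=0$ as a rational function, then the right-hand Laurent series vanishes on the nonempty open set $U$, whence $f(e)=0$ for every $e\in\ZZ^n$. The main obstacle is the second step: finding a common domain of absolute convergence for the finitely many $\zzeta$-pointed pieces. Once the recession-cone analysis places all their unbounded directions in the prescribed half-space, the rest reduces to the elementary theory of lattice-point generating functions over pointed simplicial cones.
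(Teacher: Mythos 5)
Your proof is correct, but it takes a genuinely different route from the paper's. The paper argues algebraically: it exploits the $\QQ[t_1,\ldots,t_n]$-module structure $t_i*\One(P)=\One(P+e_i)$ on~$\Pow_n$, produces a single polynomial $q$ with $q*b$ finitely supported (by clearing simplicial-cone denominators via Lemma~\ref{lem:ConesSpan}), concludes $q*b=0$ from $\QQ[t]$-linearity of $\hilb$, and then obtains a contradiction from the coefficient at the $\zzeta$-minimal point of $\mathrm{supp}(q)+\mathrm{supp}(b)$. You instead argue analytically: after arranging that each piece $R_\alpha$ in a presentation of~$f$ is $\zzeta$-pointed, you use the observation that $\zzeta$-pointedness forces recession cones into $\{v:v>_{\zzeta} 0\}\cup\{0\}$ to build a common domain~$U$ of absolute convergence, swap summation on~$U$, and invoke uniqueness of a multivariate Laurent expansion on an open set. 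Both proofs turn on the same geometric input --- $\zzeta$-pointedness bounds the support from below in the $\zzeta$-order, equivalently constrains all recession directions to a half-space --- but the paper's algebraic version avoids the convergence bookkeeping your version requires (in particular that the convergence region is a Reinhardt domain with $z^\ast$ in its interior, so that a Laurent series vanishing on~$U$ has all coefficients zero) and remains valid in purely formal contexts. Your version is closer to the naive intuition behind the lemma, namely that the various Laurent series representing a fixed rational function are pinned down by their distinct regions of convergence.
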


\begin{proof}[Proof of Lemma~\ref{PowSerUnique}]
Suppose, for the sake of contradiction, that $\hilb(b)=0$ for some nonzero $b \in \Pow_n^{\zzeta}$.
Note that $\Pow_n$ is a $\QQ[t_1, \ldots, t_n]$ module with the multiplication $t_i * \One(P) = \One(P+e_i)$.
For any simplicial cone $C$, there is a nonzero polynomial $q(t) \in \QQ[t_1, \ldots, t_n]$ such that $q * \One(C)$ has finite support.  Explicitly, we can take $q(t)=\prod (1-t^e)$ where the product is over the minimal lattice vectors on the rays of $C$ \cite[Theorem 4.6.11]{Stanley}. 
So, by Lemma~\ref{lem:ConesSpan}, we can find a nonzero $q \in \QQ[t_1, \ldots, t_n]$ such that $q * b$ is finitely supported. 

Now, $\hilb$ is clearly $\QQ[t_1, \ldots, t_n]$-linear. 
So $\hilb(q*b) = q \cdot \hilb(b) =0$. 
But $q*b$ is finitely supported, so $q*b=0$.

We now use that $b$ is $\zzeta$-pointed. 
Let $e$ be the $\zzeta$-minimal element of $\ZZ^n$ for which $b(e) \neq 0$.
Also, let $d$ be the $\zzeta$-minimal exponent for which $t^d$ occurs in $q$.
Then the coefficient of $d+e$ in $q*b$ is nonzero, a contradiction.
\end{proof}

We will usually use the above lemma in the following, obviously equivalent, form:

\begin{cor} \label{ExtractCoeff}
Suppose that we have functions $f_1$, $f_2$, \dots, $f_r$, $g_1$, $g_2$, \dots, $g_s$ in $\Pow_n^{\zzeta}$ and scalars $a_1$, \dots, $a_r$, $b_1$, \dots, $b_s$ such that $\sum a_i \hilb(f_i)=\sum b_j \hilb(g_j)$. Let $e$ be any lattice point in $\ZZ^n$.
Then $\sum a_i f_i(e)=\sum b_j g_j(e)$.
\end{cor}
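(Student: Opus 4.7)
The plan is to deduce Corollary~\ref{ExtractCoeff} directly from Lemma~\ref{PowSerUnique} by forming the difference and invoking injectivity of $\hilb$ on $\Pow_n^{\zzeta}$.

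First I would verify the small bookkeeping point that $\Pow_n^{\zzeta}$ is closed under $\QQ$-linear combinations. This is immediate from the definition: if $f_1,\ldots,f_r$ are each supported on a finite union of $\zzeta$-pointed polytopes, then any linear combination is supported on the (finite) union of those supports, which is again a finite union of $\zzeta$-pointed polytopes. The same reasoning applies to the $g_j$.

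Next, set
$$h \;:=\; \sum_i a_i f_i \;-\; \sum_j b_j g_j \;\in\; \Pow_n^{\zzeta}.$$
Since $\hilb$ is linear (Lawrence \cite{Law}, Khovanskii--Pukhlikov \cite{KP}), the hypothesis $\sum a_i \hilb(f_i) = \sum b_j \hilb(g_j)$ becomes $\hilb(h)=0$. Lemma~\ref{PowSerUnique} then forces $h=0$ as an element of $\Pow_n^{\zzeta}$, meaning $h$ vanishes as a function $\ZZ^n\to\RR$. Evaluating at an arbitrary lattice point $e\in\ZZ^n$ gives $\sum_i a_i f_i(e) = \sum_j b_j g_j(e)$, which is the conclusion.

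There is essentially no obstacle here; the only thing to double-check is that the phrase ``$h=0$'' extracted from Lemma~\ref{PowSerUnique} really refers to pointwise vanishing, but this follows from the identification in Section~\ref{sec:flipping} of elements of $\Pow_n$ with functions $\ZZ^n\to\RR$ (they are defined there as linear combinations of characteristic functions of lattice polytopes). Hence Corollary~\ref{ExtractCoeff} is just a restatement of injectivity applied to the difference $h$, and nothing further is required.
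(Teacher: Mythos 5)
Your proof is correct and is exactly the argument the paper has in mind; the paper states the corollary as an ``obviously equivalent'' reformulation of Lemma~\ref{PowSerUnique}, and you have simply made that equivalence explicit by forming the difference $h$, applying linearity of $\hilb$, and invoking injectivity on $\Pow_n^{\zzeta}$. Nothing further is needed.
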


Let $C$ be a simplicial cone with vertex $w$, spanned by rays $v_1$, $v_2$, \dots, $v_r$. 
Reorder the $v_i$ so that $v_i <_{\zzeta} 0$ for $1 \leq i \leq \ell$ and $v_i >_{\zzeta} 0$ for $\ell+1 \leq i \leq r$. 
Define the set $C^{\zzeta}$ to be
$$C^{\zzeta} = \{ w + \sum_{i=1}^r a_i v_i : \ a_i < 0 \ \mbox{for $1 \leq i \leq \ell$ and} \ a_i \geq 0 \ \mbox{for $\ell+1 \leq i \leq n$}\}$$
and define 
$$\One(C)^{\zzeta} = (-1)^{\ell} \One (C^{\zzeta}).$$

Note that $C^{\zzeta}$ is $\zzeta$-pointed.

\begin{lemma} \label{FlipWorks}
 With the above notation,
$$\hilb(\One(C)) = \hilb(\One(C)^{\zzeta}).$$
\end{lemma}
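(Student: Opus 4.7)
The strategy is to compute both $\hilb(\One(C))$ and $\hilb(\One(C^\zzeta))$ explicitly via the fundamental-parallelepiped formula, then match the two rational functions by an explicit change of parameters.

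Setting $\sigma(\Pi):=\sum_{p\in\Pi\cap\ZZ^n}z^p$ for any bounded region~$\Pi$, the standard unique decomposition of each lattice point of~$C$ as $p_0+\sum n_iv_i$ with $p_0\in\Pi_C\cap\ZZ^n$ and $n_i\in\ZZ_{\geq0}$, where $\Pi_C=\{w+\sum a_iv_i:0\leq a_i<1\}$, gives
$$\hilb(\One(C))=\frac{\sigma(\Pi_C)}{\prod_i(1-z^{v_i})}.$$
For $C^\zzeta$ I would reorient the first $\ell$ rays by setting $\tilde v_i=-v_i$ for $i\leq\ell$ and $\tilde v_i=v_i$ for $i>\ell$, so that the $\tilde v_i$ are all $\zzeta$-positive; the substitution $b_i=-a_i$ for $i\leq\ell$ then presents $C^\zzeta$ as the simplicial cone with apex $w$ and rays $\tilde v_i$, half-open with $b_i>0$ strict for $i\leq\ell$ and $b_i\geq0$ for $i>\ell$. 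The same decomposition argument, now applied to the correspondingly half-open parallelepiped $\Pi_{C^\zzeta}=\{w+\sum b_i\tilde v_i:b_i\in(0,1]\text{ for }i\leq\ell,\ b_i\in[0,1)\text{ for }i>\ell\}$, yields
$$\hilb(\One(C^\zzeta))=\frac{\sigma(\Pi_{C^\zzeta})}{\prod_i(1-z^{\tilde v_i})}.$$

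The one-variable identity $1-z^{-v}=-z^{-v}(1-z^v)$ rewrites $\prod_{i\leq\ell}(1-z^{-v_i})$ as $(-1)^\ell z^{-\sum_{i\leq\ell}v_i}\prod_{i\leq\ell}(1-z^{v_i})$; combining this sign with the $(-1)^\ell$ in $\One(C)^\zzeta=(-1)^\ell\One(C^\zzeta)$ reduces the target identity to $\sigma(\Pi_C)=z^{\sum_{i\leq\ell}v_i}\sigma(\Pi_{C^\zzeta})$. This last equality is combinatorially transparent: the substitution $a_i=1-b_i$ for $i\leq\ell$, $a_i=b_i$ for $i>\ell$ gives a bijection $(0,1]^\ell\times[0,1)^{r-\ell}\to[0,1)^r$, and the computation $w+\sum_{i\leq\ell}v_i+\sum_ib_i\tilde v_i=w+\sum_ia_iv_i$ shows that it realizes $\Pi_{C^\zzeta}+\sum_{i\leq\ell}v_i=\Pi_C$ on the nose in~$\RR^n$, hence also on their lattice points.

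The main item requiring care is the formula for $\hilb(\One(C^\zzeta))$: the fundamental-parallelepiped formula is usually stated for an all-closed simplicial cone, whereas $\Pi_{C^\zzeta}$ is half-open on the first $\ell$ rays in the opposite sense from the usual convention. The same unique-decomposition argument still applies, but one must verify that this particular half-open parallelepiped is indeed a fundamental domain for the translation action of $\sum\ZZ\tilde v_i$ on the lattice points of~$C^\zzeta$; once that is in hand, everything else is routine algebra.
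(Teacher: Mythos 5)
Your proof is correct, but it proceeds by a genuinely different route than the paper's. The paper argues by inclusion--exclusion: it sets $C_I := \{w + \sum a_i v_i : a_i \geq 0 \text{ for } i \notin I\}$ for $I \subseteq [\ell]$, notes the identity $\One(C)^{\zzeta} = \sum_{I \subseteq [\ell]} (-1)^{|I|}\One(C_I)$, and then observes that every $C_I$ with $I \neq \emptyset$ has a nontrivial lineality space and hence $\hilb(\One(C_I)) = 0$ by the convention stated just before Lemma~\ref{lem:ConesSpan}, leaving only the $I = \emptyset$ term. Your argument instead computes both Hilbert series explicitly via fundamental half-open parallelepipeds, matches the denominators through $1 - z^{-v} = -z^{-v}(1-z^v)$, and then exhibits the translation $\Pi_{C^{\zzeta}} + \sum_{i\leq\ell} v_i = \Pi_C$ as a bijection on lattice points. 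The paper's version is shorter and leans on the stated vanishing-on-lineality fact; yours is more self-contained and makes the mechanism concrete, at the cost of carefully tracking which faces of the parallelepiped are open or closed. The one point you flagged as needing care --- that $\Pi_{C^{\zzeta}}$, half-open in the reversed sense on the first $\ell$ coordinates, is a fundamental domain for the lattice points of the half-open cone --- does go through: for $i \leq \ell$ write $b_i = n_i + r_i$ with $n_i = \lceil b_i\rceil - 1 \in \ZZ_{\geq 0}$ and $r_i \in (0,1]$, and for $i > \ell$ use the usual floor. Both proofs are valid.
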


An example of Lemma~\ref{FlipWorks} is that $\sum_{i \geq 0} z^i$ and $- \sum_{i<0} z^i$ both converge to $1/(1-z)$, on different domains.

\begin{proof}
For $I$ a subset of $\{ 1,2, \ldots, \ell \}$, set 
$$C_I :=  \{ w + \sum_{i=1}^r a_i v_i : a_i \geq 0 \ \mbox{for $i \not \in I$},\ a_i \in \RR\ \mbox{for $i \in I$} \}.$$
So $C_{\emptyset} = C$. Then
$$\One(C)^{\zzeta} = \sum_{I \subset [\ell]} (-1)^{|I|} \One(C_I).$$
Applying $\hilb$ to both sides of the equation, all the terms drop out except
$$\hilb(\One(C)^{\zzeta}) = \hilb(\One(C_{\emptyset})) = \hilb(\One(C)).$$
\end{proof}

The following lemma, in the case that $\zeta_1$ has linearly independent components over $\QQ$, is the main result of \cite{Hasse}. 

\begin{lemma} \label{FlipDefined}
Let $\zzeta = (\zeta_1, \ldots, \zeta_n)$ be as above.
For every $f \in \Pow_n$, there is a unique $f^{\zzeta} \in \Pow_n^{\zzeta}$ such that $\hilb(f)=\hilb(f^{\zzeta})$.
The map $f \mapsto f^{\zzeta}$ is linear.
\end{lemma}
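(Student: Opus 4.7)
The plan is to build $f \mapsto f^\zzeta$ on the spanning set of indicator functions of simplicial cones given by Lemma~\ref{lem:ConesSpan}, then check that the construction descends to a well-defined linear map on all of~$\Pow_n$. For each simplicial cone $C$, set $f^\zzeta := \One(C)^\zzeta$ as defined just before Lemma~\ref{FlipWorks}; this element lies in $\Pow_n^\zzeta$ by construction, and Lemma~\ref{FlipWorks} gives $\hilb(\One(C)) = \hilb(\One(C)^\zzeta)$.

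The core step is to show that if $\sum_i a_i \One(C_i) = 0$ is a linear relation among indicators of simplicial cones in~$\Pow_n$, then $\sum_i a_i \One(C_i)^\zzeta = 0$ in~$\Pow_n^\zzeta$. Applying $\hilb$ and using Lemma~\ref{FlipWorks} termwise,
\[
\hilb\!\left(\sum_i a_i \One(C_i)^\zzeta\right) \;=\; \sum_i a_i \hilb(\One(C_i)^\zzeta) \;=\; \sum_i a_i \hilb(\One(C_i)) \;=\; \hilb(0) \;=\; 0.
\]
The element $\sum_i a_i \One(C_i)^\zzeta$ is a finite linear combination of $\zzeta$-pointed indicator functions, hence lies in~$\Pow_n^\zzeta$, so Lemma~\ref{PowSerUnique} forces it to vanish. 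This shows the assignment $\sum_i a_i \One(C_i) \mapsto \sum_i a_i \One(C_i)^\zzeta$ is a well-defined linear map $\Pow_n \to \Pow_n^\zzeta$, which by Lemma~\ref{FlipWorks} preserves $\hilb$. This produces the desired $f^\zzeta$ for every $f \in \Pow_n$, and linearity is immediate from the definition.

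Uniqueness follows directly from Lemma~\ref{PowSerUnique}: if $g_1, g_2 \in \Pow_n^\zzeta$ both satisfy $\hilb(g_i) = \hilb(f)$, then $g_1 - g_2 \in \Pow_n^\zzeta$ has $\hilb(g_1 - g_2) = 0$, so $g_1 = g_2$.

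The main conceptual obstacle is verifying well-definedness on the spanning set, since a given $f \in \Pow_n$ can be written as a linear combination of cone indicators in many ways, and the flipping construction depends on the cone presentation rather than on the underlying function. The argument above resolves this by pushing the ambiguity through~$\hilb$ and invoking the injectivity established in Lemma~\ref{PowSerUnique}; once one accepts that $\hilb$ is an isomorphism between $\Pow_n^\zzeta$ and a subspace of $\QQ(z_1,\ldots,z_n)$, the rest is formal.
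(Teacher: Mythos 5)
Your proof is correct and follows essentially the same route as the paper: existence comes from Lemma~\ref{lem:ConesSpan} and Lemma~\ref{FlipWorks}, and uniqueness from Lemma~\ref{PowSerUnique}. The only difference is cosmetic — you explicitly verify that the linear extension from simplicial cone indicators is well-defined, while the paper lets this follow silently from uniqueness and then deduces linearity from uniqueness afterwards; your version is slightly more self-contained but not logically distinct.
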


By Lemma~\ref{FlipWorks}, this notation $f^{\zzeta}$ is consistent with the earlier notation $\One(C)^{\zzeta}$.

\begin{proof}
We get uniqueness from Lemma~\ref{PowSerUnique}. By Lemma~\ref{lem:ConesSpan}, it is enough to show $\One(D)^{\zzeta}$ exists for $D$ a simplicial cone. 
This is Lemma~\ref{FlipWorks}.

Finally, we must establish linearity.
Let $f$ and $g \in \Pow_n$ and let $a$ and $b$ be scalars. 
Then 
\begin{multline*}
\hilb( (af+bg)^{\zzeta}) = \hilb(af+bg) = a \hilb(f) + b \hilb(g) = \\ a \hilb(f^{\zzeta}) + b \hilb(g^{\zzeta}) = \hilb( a (f^{\zzeta}) + b (g^{\zzeta})).
\end{multline*}
By uniqueness, we must have $(af+bg)^{\zzeta} = a (f^{\zzeta}) + b (g^{\zzeta})$.
\end{proof}

\begin{remark}
We warn the reader that, when $C$ is not simplicial, $\One(C)^{\zzeta}$ need not be of the form $\pm \One(C')$. For example, let $C$ be the span of $(0,0,1)$, $(1,0,1)$, $(0,1,1)$ and $(1,1,1)$. 
Choose $\zeta_1$ to be negative on $(0,0,1)$, $(1,0,1)$ and positive on  $(0,1,1)$ and $(1,1,1)$. Then $\One(C)^{\zzeta} = \One(U)-\One(V)$ where  
$$U = \{ a(1,0,0) + b(0,0,-1) + c(0,-1,-1) : a \geq 0, \ b,c > 0 \}$$ 
and 
$$V=\{ a(1,0,0) + b(1,0,1) + c(1,1,1) : a>0, b,c \geq 0 \}.$$
\end{remark}

\begin{lemma} \label{ZetaVanishing}
Let $C$ be a pointed cone with vertex at $w$. Then $\One(C)^{\zzeta}$ is contained in the half space $\{ x : \langle \zeta_1, x \rangle \geq \langle \zeta_1, w \rangle \}$. Furthermore, if $C$ is not contained in $\{ x : \langle \zeta_1, x \rangle \geq \langle \zeta_1, w \rangle \}$, then $\One(C)^{\zzeta}$ is in the open half space  $\{ x : \langle \zeta_1, x \rangle > \langle \zeta_1, w \rangle \}$.
\end{lemma}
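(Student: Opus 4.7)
The plan is to reduce both statements to the simplicial case, where one can read them off from the explicit description of $\One(D)^{\zzeta}$, and then bootstrap to a general pointed cone via triangulation together with linearity of the flip (Lemma~\ref{FlipDefined}).  The only subtlety is choosing the right triangulation for the strict statement.

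For a simplicial cone $D$ with apex $w$ and rays $v_1,\ldots,v_r$, any $p$ in the support of $\One(D)^{\zzeta}$ has the form $p = w + \sum_i a_i v_i$ with $a_i<0$ when $v_i<_{\zzeta}0$ and $a_i\geq 0$ when $v_i>_{\zzeta}0$.  In either case $a_iv_i\geq_{\zzeta}0$, so $\langle\zeta_1,a_iv_i\rangle\geq 0$, and summing gives $\langle\zeta_1,p-w\rangle\geq 0$.  If moreover some ray $v_*$ of $D$ satisfies $\langle\zeta_1,v_*\rangle<0$, then the constraint $a_*<0$ forces $a_*\langle\zeta_1,v_*\rangle>0$, making the total strictly positive.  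For general $C$, I would fix any triangulation $C = D_1\cup\cdots\cup D_m$ into simplicial cones with apex $w$ using only rays of $C$; by inclusion--exclusion
\[
\One(C) \;=\; \sum_{\varnothing\neq I\subseteq[m]}(-1)^{|I|+1}\One(D_I),
\]
where each $D_I:=\bigcap_{i\in I}D_i$ is a simplicial face sharing the apex and having rays among those of $C$.  Applying the linearity of flipping term-by-term, together with the simplicial analysis, shows that $\One(C)^{\zzeta}$ is supported in $\{\langle\zeta_1,x-w\rangle\geq 0\}$; this gives the first claim.

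For the strict claim, the hypothesis $C\not\subseteq\{\langle\zeta_1,x-w\rangle\geq 0\}$ produces some direction in $C-w$ with negative $\zeta_1$-pairing, and writing it as a nonnegative combination of the extreme rays of $C$ forces at least one extreme ray $v_*$ to satisfy $\langle\zeta_1,v_*\rangle<0$.  I would then specialize the triangulation above to a pulling triangulation from $v_*$, with the key property that every maximal simplicial piece $D_j$ has $v_*$ among its rays.  Consequently every intersection $D_I$, being a common face of the $D_i$ for $i\in I$, still has $v_*$ as a ray; the strict half of the simplicial analysis then applies uniformly to every $\One(D_I)^{\zzeta}$, and the inclusion--exclusion sum lies in the open half space $\{\langle\zeta_1,x-w\rangle>0\}$.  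The main step to execute carefully is this pulling construction---ensuring that $v_*$ really does pass through every intersection $D_I$ as a ray, not just as a point---but this is standard once one applies a pulling subdivision to a bounded cross-section polytope of $C$.
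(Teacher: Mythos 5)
Your proof is correct and takes essentially the same route as the paper: reduce to the simplicial case via a triangulation and linearity of the flip, and for the strict claim use a pulling triangulation from a ray $v_*$ with $\langle\zeta_1,v_*\rangle<0$ so that all the cones appearing in the decomposition contain $v_*$. The only cosmetic difference is that you express $\One(C)$ by raw inclusion--exclusion over intersections $D_I$ of maximal cells, whereas the paper uses the equivalent signed sum $\sum_{F\in\mathcal F}(-1)^{\dim C - \dim F}\One(F)$ over interior faces of the triangulation.
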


\begin{proof}
For simplicial cones, this follows from the explicit description of $\One(C)^{\zzeta}$ in Lemma~\ref{FlipWorks}. Since any cone can be triangulated, the statement about the closed half space follows immediately from linearity and the simplicial case.

If $C$  is not contained in $\{ x : \langle \zeta_1, x \rangle \geq \langle \zeta_1, w \rangle \}$ then there is some ray of $C$ in direction $v$ with $\langle \zeta_1, v \rangle <0$. Choose a triangulation of $C$ in which every interior face uses the ray $v$.
For example, we can triangulate the faces of $C$ which do not contain $v$, then cone that triangulation from $v$.
(This is called a \newword{pulling triangulation}.)

Letting $\mathcal{F}$ be the set of interior cones of this triangulation, we have $\One(C) = \sum_{F \in \mathcal{F}} (-1)^{\dim C - \dim F} \One(F)$ and 
$\One(C)^{\zzeta} = \sum_{F \in \mathcal{F}} (-1)^{\dim C - \dim F} \One(F)^{\zzeta}$. By the simplicial computation, each summand on the right is supported on the required open half space.
\end{proof}

\begin{cor} \label{ConvexHull}
Let $C_i$ be a finite sequence of pointed cones in $\RR^n$, with the vertex of $C_i$ at $w_i$.
Let $a_i$ be a finite sequence of scalars.
Suppose that we know $\sum a_i \hilb(C_i)$ is a Laurent polynomial.
Then its Newton polytope is contained in the convex hull of the $w_i$.
\end{cor}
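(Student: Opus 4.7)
The plan is to argue by contradiction, using flipping to reconcile the cone expansion of~$P$ with its Laurent polynomial expansion, and separating hyperplanes to locate a violation. Suppose for contradiction that some lattice point $e \in \ZZ^n$ has nonzero coefficient $c_e$ in $P := \sum_i a_i \hilb(C_i)$ but lies outside $\conv(w_1,\ldots,w_r)$. By the standard separation theorem in~$\RR^n$, pick a vector $\zeta_1 \in \RR^n$ with $\langle \zeta_1, e\rangle < \langle \zeta_1, w_i\rangle$ for every~$i$, and extend $\zeta_1$ arbitrarily to a basis $\zzeta = (\zeta_1,\ldots,\zeta_n)$ of~$\RR^n$.

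Next, I will play two elements of $\Pow_n^\zzeta$ against each other. Define
\[ f := \sum_i a_i\, \One(C_i)^\zzeta \qquad\text{and}\qquad g := \sum_{m \in \ZZ^n} c_m\, \One(\{m\}), \]
where the $c_m$ are the coefficients of~$P$. By Lemma~\ref{FlipDefined}, $f$ lies in $\Pow_n^\zzeta$ and satisfies $\hilb(f) = P$; and $g$ lies in $\Pow_n^\zzeta$ as a finite sum of indicators of points, with $\hilb(g) = P$. Lemma~\ref{PowSerUnique} then forces $f = g$ as functions on $\ZZ^n$. On the other hand, Lemma~\ref{ZetaVanishing} tells us that each $\One(C_i)^\zzeta$ is supported in the half-space $\{x : \langle \zeta_1, x\rangle \geq \langle \zeta_1, w_i\rangle\}$, so $f$ is supported where $\langle \zeta_1, \cdot\rangle \geq \min_i \langle \zeta_1, w_i\rangle$. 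In particular $g(e) = f(e) = 0$, which contradicts $c_e \neq 0$ since $\langle \zeta_1, e\rangle$ was chosen strictly less than every $\langle \zeta_1, w_i\rangle$.

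The key maneuver is the combined use of Lemma~\ref{ZetaVanishing} (a half-space bound on the support of each flipped cone) and Lemma~\ref{PowSerUnique} (uniqueness of $\zzeta$-pointed representatives), which together let us transfer the half-space bound from the flipped cone side to the Laurent polynomial side, where it contradicts the existence of a bad exponent. I expect the main point that needs care is simply checking that both $f$ and $g$ genuinely lie in $\Pow_n^\zzeta$: the former is handled by Lemma~\ref{FlipDefined}, while the latter follows from the triviality that any finitely supported element of $\Pow_n$ is automatically $\zzeta$-pointed for any~$\zzeta$. Everything else is routine once the separating functional is in hand.
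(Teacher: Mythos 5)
Your proof is correct and takes essentially the same route as the paper's: a separating functional $\zeta_1$ via Farkas/separation, completed to a basis $\zzeta$, then the observation that the finitely supported polynomial is its own $\zzeta$-flip while Lemma~\ref{ZetaVanishing} confines the flipped cones to the half-space $\langle\zeta_1,\cdot\rangle\geq\min_i\langle\zeta_1,w_i\rangle$. The only difference is cosmetic: you make explicit, via Lemma~\ref{PowSerUnique}, the step identifying the polynomial with $\sum_i a_i\One(C_i)^\zzeta$, which the paper leaves implicit.
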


\begin{proof}
Let $P$ be the Newton polytope in question and let $\sum_{e \in P} f(e) z^e$ be the polynomial.
Extend $f$ to $\ZZ^n$ by $f(e)=0$ for $e \not \in P$. Since $P$ is a bounded polytope, $f$ is $\zzeta$-pointed for every $\zzeta$ and, thus, $f^{\zzeta} = f$ for every $\zzeta$.

Let $e$ be a lattice point which is not contained in the convex hull of the $w_i$. 
By the Farkas lemma \cite[Proposition 1.10]{Zie}, there is some $\zeta_1$ such that $\langle \zeta_1, e \rangle < \langle \zeta_1, w_i \rangle$ for all $i$.
Complete $\zeta_1$ to a basis $\zzeta$ of $\RR^n$.
For this $\zzeta$, Lemma~\ref{ZetaVanishing} shows that $f^{\zzeta}$ does not contain $e$. 
But, as noted above, $f^{\zzeta} = f$. So $f(e)=0$. We have shown that $f(e)=0$ whenever $e$ is not in the convex hull of the $w_i$, which is the required claim.
\end{proof}

\section{Proof of Theorem~\ref{TutteTheoremProved}} \label{sec:tutte}

Let $M$ be a rank $d$ matroid on the ground set $[n]$, and let $\rho_M$ be the rank function of $M$. The rank generating function of $M$ is 
$$r_M(u,v) := \sum_{S \subset [n]} u^{d - \rho_M(S)} v^{|S| - \rho_M(S)}.$$
The Tutte polynomial is defined by $t_M(z,w) = r_M(z-1,w-1)$. 
See \cite{BO} for background on the Tutte polynomial, including several alternate definitions.

We continue to use the notations $\pi_d$, $\pi_{1(n-1)}$, $\alpha$ and $\beta$ 
from section~\ref{sec:intro}, and the notation $\KTP$ for $K^0(\pt) = \ZZ[t_1^{\pm 1}, \ldots, t_n^{\pm 1}]$.

The aim of this section is to prove:

\begin{theorem} \label{TutteTheoremProved}
We have
$$(\pi_{1(n-1)})_* \pi_{d}^* \left( y(M) \cdot  [\mathcal{O}(1)] \vphantom{\int} \right) = t_M(\alpha, \beta).$$
\end{theorem}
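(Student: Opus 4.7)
The plan is to combine Lemma~\ref{pushtoproj} with equivariant localization, and then match the output against the rank generating function using the flipping machinery of Section~\ref{sec:flipping}.

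First I would apply Lemma~\ref{pushtoproj} with $[E] = y(M)\cdot[\OO(1)]$. Writing $r_M(u,v) := \sum_{S\subseteq[n]} u^{d-\rho(S)}v^{|S|-\rho(S)}$ for the rank generating function, Tutte's identity $t_M(\alpha,\beta) = r_M(\alpha-1,\beta-1)$ reduces the theorem to the equality $R(u,v) = r_M(u,v)$, where
$$R(u,v) := \int_{G(d,n)} y(M)[\OO(1)]\sum_{p,q} [\bw^p S][\bw^q Q^{\vee}] u^p v^q.$$
Next, lift to equivariant $K$-theory and compute $R^T(u,v)$ via the pushforward formula~\eqref{PushForward}. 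At the fixed point $x_I$ ($I\in\binom{[n]}{d}$) one has $y(M)(x_I) = \hilb(\Cone_I(M))\prod_{i\in I,\,j\notin I}(1-t_i^{-1}t_j)$, $[\OO(1)](x_I) = \prod_{i\in I}t_i$, and $\sum_p[\bw^p S](x_I) u^p = \prod_{i\in I}(1+ut_i^{-1})$, $\sum_q[\bw^q Q^{\vee}](x_I)v^q = \prod_{j\notin I}(1+vt_j)$. The tangent-weight denominator at $x_I$ is precisely $\prod_{i\in I,\,j\notin I}(1-t_i^{-1}t_j)$, which cancels the matching factor in $y(M)(x_I)$, so
$$R^T(u,v) = \sum_{I\in M}\hilb(\Cone_I(M))\prod_{i\in I}(t_i+u)\prod_{j\notin I}(1+vt_j),$$
and $R(u,v) = R^T(u,v)\big|_{t=1}$, even though the individual summands are rational functions with poles along $t_i=t_j$.

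The heart of the proof would be to establish the stronger equivariant identity
$$R^T(u,v) = \sum_{C\subseteq[n]} t^{e_C}\, u^{d-\rho_M(C)}\,v^{|C|-\rho_M(C)},$$
which specializes at $t=1$ to $R(u,v) = r_M(u,v)$. Expanding the product, each summand is $u^{d-|A|}v^{|B|}\hilb(\Cone_I(M)+e_{A\cup B})$, the Hilbert series of a pointed cone with vertex $e_{A\cup B}\in\{0,1\}^n$ (using $A\cap B=\emptyset$). Since every $C\subseteq[n]$ arises as $A\cup B$ for some $(I,A,B)$, applying Corollary~\ref{ConvexHull} coefficient-wise in $u$ and $v$ confines the Newton polytope of the Laurent polynomial $R^T$ to $\conv\{e_C:C\subseteq[n]\}=[0,1]^n$, so $R^T=\sum_C c_C(u,v)\,t^{e_C}$. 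To identify $c_C$, pick a generic $\zzeta$ and replace each $\hilb(\Cone_I(M))$ by $\hilb(\One(\Cone_I(M))^\zzeta)$ via Lemma~\ref{FlipWorks}; Corollary~\ref{ExtractCoeff} then extracts
$$c_C(u,v) = \sum_{I\in M}\sum_{A\subseteq I,\,B\subseteq I^c} u^{d-|A|}v^{|B|}\cdot \One(\Cone_I(M))^\zzeta(e_C-e_{A\cup B})$$
as a finite signed lattice-point count, well-defined because each $\One(\Cone_I(M))^\zzeta$ is $\zzeta$-pointed.

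The hard part will be showing that this signed count collapses to the single monomial $u^{d-\rho(C)}v^{|C|-\rho(C)}$. The plan for that is to combine the flipping description of the local cones with Brion's formula applied to the face $F_{A,B}\subseteq\Poly(M)$ cut out by $\{x_i=1:i\in A\}\cup\{x_j=0:j\in B\}$: the vertices of $F_{A,B}$ are precisely the $e_I$ with $A\subseteq I$ and $I\cap B=\emptyset$, and the nonemptiness of $F_{A,B}$ together with its dimension is governed by $\rho_M(A)$ and $\rho_M([n]\setminus B)$. One expects that the contributions from $(A,B)$-configurations that are not ``rank-efficient'' at $C$ (meaning $|A|\neq\rho_M(C)$ or $|B|\neq|C|-\rho_M(C)$) cancel in signed pairs through the flipping, leaving exactly the asserted monomial. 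Establishing this final matroid-theoretic cancellation pattern is the key technical step; once it is in place, summing over $C$ and setting $t=1$ completes the proof.
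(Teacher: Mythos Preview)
Your reduction via Lemma~\ref{pushtoproj} to the equivariant identity, the localization computation yielding $\sum_{I\in M}\hilb(\Cone_I(M))\prod_{i\in I}(u+t_i)\prod_{j\notin I}(1+vt_j)$, and the Newton-polytope bound from Corollary~\ref{ConvexHull} all match the paper exactly. The divergence, and the gap, is in how you extract the coefficient of $t^{e_C}$.

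You fix a single generic $\zzeta$, flip every cone with respect to it, and then hope for a Brion-type cancellation over the faces $F_{A,B}$. You flag this cancellation as unproved, and rightly so: it is the entire content of the theorem, not a technicality to be filled in later. With one global $\zzeta$, many triples $(I,A,B)$ can contribute nontrivially to a given $t^{e_C}$, and organizing the signs is genuinely hard.

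The paper's key idea is to let $\zzeta$ depend on the subset $C$ whose coefficient is being extracted. Choose $\zeta_1$ with $(\zeta_1)_i<0$ for $i\in C$, $(\zeta_1)_i>0$ for $i\notin C$, and components linearly independent over~$\QQ$. Then $e_C$ is the strict $\zeta_1$-minimum of the unit cube. By Lemma~\ref{ZetaVanishing}, the flipped cone $\One(\Cone_I(M))^{\zzeta}$ translated to vertex $e_{A\cup B}$ is supported in $\{\langle\zeta_1,x\rangle\geq\langle\zeta_1,e_{A\cup B}\rangle\}$, so it can meet $e_C$ only if $A\cup B=C$; and even then, by the second clause of Lemma~\ref{ZetaVanishing}, only if $\Cone_I(M)$ already lies in the half-space $\{\langle\zeta_1,\cdot\rangle\geq 0\}$, i.e.\ only if $e_I$ minimizes $\zeta_1$ on $\Poly(M)$. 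Genericity of $\zeta_1$ forces a unique such basis $I_0$, and then $A=C\cap I_0$, $B=C\setminus I_0$ are forced as well. The single surviving term is $u^{d-|C\cap I_0|}v^{|C\setminus I_0|}$. Finally, the sign pattern of $\zeta_1$ means that minimizing $\zeta_1$ over $\Poly(M)$ is the same as maximizing $|I\cap C|$ over bases $I\in M$, so $|C\cap I_0|=\rho_M(C)$ and the coefficient is exactly $u^{d-\rho_M(C)}v^{|C|-\rho_M(C)}$.

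In short: do not fix $\zzeta$ once and for all. Choose it anew for each monomial $t^{e_C}$ so that exactly one term survives and no cancellation argument is needed at all.
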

As usual, the sheaf $\OO(1)$ on $G(d,n)$ is the pullback of~$\OO(1)$ on $\PP^N$ via the Pl\"ucker embedding.
We can also describe $\OO(1)$ as  $\bw^d S^{\vee}$.

By Lemma~\ref{pushtoproj}, it is enough to show instead that
$$\int y(M) \cdot [\mathcal{O}(1)]  \cdot \sum_{p=0}^d \sum_{q=0}^{n-d} \ [\bw^p S] \ [\bw^q (Q^{\vee})] u^p v^q = r_M(u,v).$$
In fact, we will show something stronger.  

\begin{theorem}\label{thm:RankCorankGF}
In equivariant $K$-theory, we have
\begin{multline}
\int^T \sum_{p=0}^d \sum_{q=0}^{n-d} y(M) \  \eqv{[\mathcal{O}(1)]}\  \eqv{[\bw^p S]} \ \eqv{[\bw^q (Q^{\vee})]} \ u^p v^q \\
= \sum_{ S \subset [n] } t^{e_S} u^{d-\rho_M(S)} v^{ |S| - \rho_M(S)}. \label{TutteCheckThisEquivariant}  
\end{multline}
\end{theorem}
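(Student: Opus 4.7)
The plan is to apply equivariant localization and then extract the coefficient of each Laurent monomial $t^{e_{S_0}}$ using the flipping technology of Section~\ref{sec:flipping}.

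Step 1 (Localization). The pushforward formula~\eqref{PushToPoint} writes the LHS as a sum over $I\in\binom{[n]}{d}$. The factor $\prod_{i\in I,\,j\notin I}(1-t_i^{-1}t_j)$ in $y(M)(x_I)$ from Proposition~\ref{YWellDefined} cancels the tangent denominators. Using that the characters of $\O(1)$, $\bw^p S$ and $\bw^q Q^{\vee}$ at $x_I$ are $t^{e_I}$ and the elementary symmetric polynomials in $(t_i^{-1})_{i\in I}$ and $(t_j)_{j\notin I}$ respectively, summing in $p$ and $q$ converts the LHS to
$$F(t,u,v) \;:=\; \sum_{I\in M}\hilb(\Cone_I(M))\prod_{i\in I}(t_i+u)\prod_{j\notin I}(1+v t_j),$$
a Laurent polynomial in $t$ (since $\int^T$ lands in $\KTP$).

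Step 2 (Coefficient extraction). Expanding $\prod_{i\in I}(t_i+u)\prod_{j\notin I}(1+v t_j)=\sum_S u^{d-|I\cap S|}v^{|S|-|I\cap S|}t^{e_S}$ and invoking Corollary~\ref{ExtractCoeff},
$$[t^{e_{S_0}}]F \;=\; \sum_{I\in M,\,S}u^{d-|I\cap S|}v^{|S|-|I\cap S|}\,[t^{e_{S_0}-e_S}]\hilb^{\zzeta}(\Cone_I(M)),$$
where each $\hilb^{\zzeta}$ is the $\zzeta$-pointed expansion from Lemma~\ref{FlipDefined}. Since $\Cone_I(M)\subseteq\{\sum c_i=0\}$ we also need $|S|=|S_0|$. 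Take $\zeta_1=-e_{S_0}$ with a generic completion.

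Step 3 (Selection by flipping). The rays of $\Cone_I(M)$ are basis-exchange directions $e_j-e_i$, and $\langle\zeta_1,e_j-e_i\rangle<0$ exactly when $j\in S_0,\ i\notin S_0$, i.e.\ when the exchange strictly raises $|I\cap S_0|$; by matroid convexity this happens iff $|I\cap S_0|<\rho_M(S_0)$. Lemma~\ref{ZetaVanishing} then yields: if $|I\cap S_0|<\rho_M(S_0)$ the $\zzeta$-support of $\hilb^{\zzeta}(\Cone_I(M))$ is strictly in $\{\sum_{i\in S_0}c_i<0\}$, killing the contribution because $\sum_{i\in S_0}(e_{S_0}-e_S)_i\ge 0$; if $|I\cap S_0|=\rho_M(S_0)$ the only surviving term is $(S,c)=(S_0,0)$. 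Consequently
$$[t^{e_{S_0}}]F \;=\; u^{d-\rho_M(S_0)}v^{|S_0|-\rho_M(S_0)}\sum_{|I\cap S_0|=\rho_M(S_0)}[t^0]\hilb^{\zzeta}(\Cone_I(M)).$$

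Step 4 (Constant-term identity; main obstacle). For any pointed cone $C$ with vertex at the origin and generic $\zzeta$, a direct inspection using Lemma~\ref{FlipWorks} shows that $[t^0]\hilb^{\zzeta}(C)$ equals $1$ if $\zzeta$ lies in the interior of the dual cone $C^{\vee}$ and $0$ otherwise: indeed, for each simplicial subcone the origin belongs to its $\zzeta$-flipped realization precisely when no ray is $<_{\zzeta} 0$. Applied to $C=\Cone_I(M)$, whose duals are the normal cones of $\Poly(M)$ at its vertices and therefore tile $\RR^n$, the perturbed vector $-e_{S_0}+\varepsilon\zeta_2+\cdots$ lies in the relative interior of exactly one of these normal cones; that cone belongs to a vertex of the face $F_{S_0}\subseteq\Poly(M)$ maximizing $\sum_{i\in S_0}x_i$, because $-e_{S_0}$ itself already lies in the relative interior of the normal fan cell of $F_{S_0}$. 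The sum therefore collapses to a single~$1$, completing the identification with $\sum_{S\subseteq[n]} t^{e_S}u^{d-\rho_M(S)}v^{|S|-\rho_M(S)}$.
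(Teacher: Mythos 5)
Your proof is correct, and it follows the same overall template as the paper's proof (equivariant localization, followed by extracting the coefficient of $t^{e_{S_0}}$ using the $\zzeta$-flipping machinery and Corollary~\ref{ExtractCoeff}). The one real point of divergence is the choice of the discriminating vector $\zeta_1$. The paper takes $\zeta_1$ with components $\QQ$-linearly independent, negative on~$S_0$ and positive off~$S_0$; then $\zeta_1$ is injective on $\{0,1\}^n$, a \emph{unique} basis $I_0$ minimizes $\zeta_1$ over $\Poly(M)$, and $|I_0\cap S_0|=\rho_M(S_0)$ falls out of greediness with no further work. You instead take the degenerate $\zeta_1=-e_{S_0}$ with a generic lexicographic completion. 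This makes the contributing bases visible, conceptually, as the vertex set of the face of $\Poly(M)$ on which $\sum_{i\in S_0}x_i$ is maximal; but now several bases can minimize $\zeta_1$, and you need your Step~4 to show the constant terms over this set add up to precisely~$1$. That step does work: the constant-term criterion ($[t^0]\hilb^{\zzeta}(C)=1$ if every ray of $C$ is $>_\zzeta 0$ and $0$ otherwise) is clear for simplicial cones from Lemma~\ref{FlipWorks}, and for a general pointed cone with some ray $<_\zzeta 0$ it follows from a pulling triangulation from that ray exactly as in the proof of Lemma~\ref{ZetaVanishing} --- worth a line in the writeup. Combined with the observation that the cones $\Cone_I(M)^\vee$ are the maximal cones of the (complete) inner normal fan of $\Poly(M)$, the generically perturbed vector lies strictly inside exactly one of them, which is adjacent to the cell of the face maximizing $\sum_{i\in S_0}x_i$; hence exactly one $I$ contributes a $1$. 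In sum: you get a cleaner geometric picture of \emph{which} bases compete, at the price of a normal-fan-and-constant-term argument that the paper sidesteps entirely by genericity of~$\zeta_1$.
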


That is, the integral \eqref{TutteCheckThisEquivariant}
is a generating function in $\KTP[u,v]$ recording the subsets of $[n]$ 
which $r_M(u,v)$ enumerates.

As defined earlier, let $e_S = \sum_{i \in S} e_i$.
We now begin computing the left hand side of~(\ref{TutteCheckThisEquivariant}), using localization. 
Let $I \in \binom{[n]}{d}$ and abbreviate $[n] \setminus I$ by $J$.  Because the localization
of a vector bundle at $x_I$ is the character of its stalk there, we have
\begin{align*}
\left( \vphantom{[E]^{T^2}} \eqv{[\mathcal{O}(1)]} \eqv{[\bw^p S]} \eqv{[\bw^q (Q^{\vee})]} \right)(I) 
&= (t_{i_1} \cdots t_{i_d}) e_p(t_i^{-1})_{i \in I} e_q(t_j)_{j \in J} 
\\&= e_{d-p}(t_i)_{i \in I} e_q(t_j)_{j \in J}
\end{align*}
where $e_k$ is the $k$-th elementary symmetric function.  
Summing over $p$ and $q$ and expanding, we get
$$\sum_{p=0}^d \sum_{q=0}^{n-d} \eqv{[\mathcal{O}(1)]} \eqv{[\bw^p S]} \eqv{[\bw^q (Q^{\vee})]}(I)
= \sum_{P \subseteq I} \sum_{Q \subseteq J} t^{e_P+e_Q} u^{d-|P|} v^{|Q|}.$$
So we want to compute
\begin{equation}
\sum_{I \in M} \hilb(\Cone_{I}(M)) \sum_{P \subseteq I} \sum_{Q \subseteq J} t^{e_P + e_Q} u^{d-|P|} v^{|Q|}. \label{TutteExpanded}
\end{equation}
The reader may want to consult Example~\ref{SquarePyramid2} at this time.

Although by its looks this sum is a rational function in the $t_i$,
it is a class in $\KTP$ and is therefore a Laurent polynomial. 
By Corollary~\ref{ConvexHull},  all the exponents appearing with nonzero coefficient in this polynomial must be in the convex hull of the set of all exponents which can be written as $e_P + e_Q$, for $P$ and $Q$ as above. Since $P$ and $Q$ are disjoint, all of these exponents are in the cube $\{ 0,1 \}^n$, so the polynomial~(\ref{TutteExpanded}) must be supported on monomials of the form $t^{e_S}$. Fix a subset $S$ of $[n]$. Our goal is now to compute the coefficient of $t^{e_S}$ in~(\ref{TutteExpanded}). 

Choose $\zeta_1\in\RR^n$ such that the components of $\zeta_1$ are linearly independent over $\mathbb{Q}$, the component $(\zeta_1)_i$ is negative for $i \in S$ and $(\zeta_1)_i$ is positive for $i \not \in S$.
Clearly, on the cube $\{ 0,1 \}^n$, the minimum value of $\zeta_1$ occurs at $e_S$.
Complete $\zeta_1$ to a basis $\zzeta$ of $\RR^n$.
Note that $\zeta_1$ assumes distinct values on the $2^n$ points of the unit cube.
Then~(\ref{TutteExpanded}) is equal to
\begin{equation}
\sum_{I \in M} \hilb( \One(\Cone_{I}(M))^{\zzeta} ) \sum_{P \subseteq I} \sum_{Q \subseteq J} t^{e_{P \cup Q}} u^{d-|P|} v^{|Q|}. \label{TutteExpandedFlipped}
 \end{equation}
By Corollary~\ref{ExtractCoeff} we can compute the coefficient of $t^{e_S}$ in this polynomial by adding up the coefficients of $t^{e_S}$ in each term.

We therefore consider the coefficient of $t^{e_S}$ in $t^{e_{P \cup Q}} \hilb( \One(\Cone_{I}(M)^{\zzeta}) )$. 
The function $e_{P \cup Q} + \One(\Cone_I(M))^{\zzeta}$ is supported on a cone whose tip is at $e_{P \cup Q}$, and which is contained in the half space $\{ x: \langle \zeta_1, x\rangle \geq \langle \zeta_1, e_{P \cup Q} \rangle \}$.
Since $e_{P \cup Q}$ is in the unit cube $\{ 0,1 \}^n$, our choice of $\zeta_1$ implies that $\langle \zeta_1, e_S \rangle \leq \langle \zeta_1, e_{P \cup Q} \rangle$. 
So $t^{e_{P \cup Q}} \hilb( \One(\Cone_{I}(M))^{\zzeta} )$ contains a $t^{e_S}$ term only if $S = P \cup Q$. 

Even if $S=P \cup Q$, by Lemma~\ref{ZetaVanishing}, the coefficient of $t^{e_S}$ is nonzero only if $\Cone_{I}(M)$ is in the half space where $\zeta_1$ is nonnegative. This occurs if and only if $\zeta_1(e_I) \leq \zeta_1(e_{I'})$ for every $I' \in M$.

In short, the coefficient of $t^{e_S}$ receives nonzero contributions from those triples $(I, P, Q)$ such that
\begin{enumerate}
 \item The function $\zeta_1$, on $\Poly(M)$, is minimized at $e_I$.
\item $P  \subseteq I$ and $Q \subseteq [n] \setminus I$.
\item $S = P \cup Q$.
\end{enumerate}
The contribution from such a triple is $u^{d-|P|} v^{|Q|}$. 

Because $\zeta_1$ takes distinct values on $\{ 0,1 \}^n$, there is only one basis of $M$ at which $\zeta_1$ is minimized. 
Call this basis $I_0$.
Moreover, there is only one way to write $S$ as $P \cup Q$ with $P \subseteq I_0$ and $Q \subseteq [n] \setminus I_0$; we must take $P = S \cap I_0$ and $Q = S \cap ([n] \setminus I_0)$. 
So the coefficient of $t^{e_S}$ is $u^{d - |S \cap I_0|} v^{|S \cap ([n] \setminus I_0)|}$.

From the way we chose $\zeta_1$, we see that $I_0$ is an element of $M$ with maximal intersection with $S$.
In other words, $|S \cap I_0| = \rho_M(S)$. From the description in the previous paragraph, the coefficient of $t^{e_S}$ is $u^{d - \rho_M(S)} v^{|S| - \rho_M(S)}$.
Summing over $S$, we have equation~(\ref{TutteCheckThisEquivariant}), 
and Theorems \ref{TutteTheoremProved} and~\ref{thm:RankCorankGF} are proved.

\begin{question}
Is there an equivariant version of Lemma~\ref{pushtoproj} which provides 
a generating function in $\KTP[u,v]$ for the bases of given activity,
parallel to Theorem~\ref{thm:RankCorankGF} for the rank generating function?
\end{question}

\begin{example} \label{SquarePyramid2}
We compute the sum in~(\ref{TutteExpanded}) for the matroid from Example~\ref{SquarePyramid1}. 
We can shorten our expressions slightly by defining 
\begin{eqnarray*}
s_{I} &:=& \sum_{P \subseteq I} \sum_{Q \subseteq J} t^{e_P + e_Q} u^{d-|P|} v^{|Q|} = \prod_{i \in I} (u+t_i) \prod_{j \in J} (1+v t_j) \\
h_{I} &:=& \prod_{i \in I} \prod_{j \in J} (1-t_i^{-1} t_j)^{-1}. 
\end{eqnarray*}

Then (\ref{TutteExpanded}) is 
\begin{multline*}
s_{13} h_{13} (1-t_2 t_3^{-1}) + s_{14} h_{14} (1-t_2 t_4^{-1}) + s_{23} h_{23} (1-t_1 t_3^{-1})  \\ + s_{24} h_{24} (1-t_2 t_4^{-1}) + s_{34} h_{34} (1-t_1 t_2 t_3^{-1} t_4^{-1}), 
\end{multline*}
which is
\begin{multline*}
\left( t_1 t_3 + t_2 t_3 + t_1 t_4 + t_2 t_4 + t_3 t_4 \right) 
+\left(t_1 + t_2 + t_3 + t_4 \right) \cdot u \\
+\left( t_1 t_2 t_3  + t_1 t_2 t_4 + t_1 t_3 t_4 + t_2 t_3 t_4 \right) \cdot v
+ u^2 + \left(t_1 t_2 \right) \cdot uv  + \left( t_1 t_2 t_3 t_4 \right) \cdot v^2
\end{multline*}

Specializing the $t_i$ to zero gives the rank-generating function
$$5+4u+4v+u^2+uv+v^2.$$
Setting $u=z-1$ and $v=w-1$ gives the Tutte polynomial
$$w + z+ w^2 + w z + z^2.$$


\end{example}

\section{Proof of Theorems \ref{DiagonalPolynomial}, \ref{OldNew}} \label{sec:h}

In this section, we discuss the relation between localization methods and the matroid invariant $h_M$ discovered by the second author. Our first aim is to prove Theorem~\ref{DiagonalPolynomial} below,
defining a polynomial $H_M$.  We will then discuss the relation of $H_M$ to $h_M$.

\begin{theorem} \label{DiagonalPolynomial}
Let $M$ be a rank $d$ matroid on $[n]$ without loops or coloops. Let the maps $\pi_d$ and $\pi_{1(n-1)}$ 
and the classes $\alpha$ and $\beta$ be as in Section~\ref{sec:intro}.  
Then there exists a polynomial $H_M\in\ZZ[s]$ such that
$$(\pi_{1(n-1)})_* \pi_{d}^*\, y(M) = H_M(\alpha+\beta-\alpha \beta).$$
\end{theorem}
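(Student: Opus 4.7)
By Lemma~\ref{pushtoproj} applied to $[E]=y(M)$,
$$(\pi_{1(n-1)})_*\pi_d^*y(M)=R(\alpha-1,\beta-1),\qquad R(u,v):=\int_{G(d,n)} y(M)\sum_{p,q}[\bw^p S][\bw^q Q^{\vee}]\,u^p v^q.$$
Since $\alpha+\beta-\alpha\beta=1-(1-\alpha)(1-\beta)=1-(\alpha-1)(\beta-1)$, setting $u=\alpha-1$, $v=\beta-1$ gives $uv=1-(\alpha+\beta-\alpha\beta)$. Therefore the theorem reduces to the statement that $R(u,v)$ is a polynomial in $uv$, and then we set $H_M(s):=R(u,v)|_{uv=1-s}$. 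Equivalently, it suffices to prove that
$$\int y(M)\,[\bw^p S][\bw^q Q^{\vee}]=0\quad\text{whenever }p\ne q.$$

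The plan is to compute this coefficient by equivariant localization, mirroring the strategy of Section~\ref{sec:tutte}. Using~\eqref{PushToPoint} together with the computation of $y(M)(x_I)$ and the tangent characters at the fixed points of $G(d,n)$, the equivariant version
$$R^T(u,v)=\sum_{I\in M}\hilb(\Cone_I(M))\prod_{i\in I}(1+u/t_i)\prod_{j\notin I}(1+v\,t_j)$$
is a Laurent polynomial in $t$ by Corollary~\ref{ConvexHull}, and its specialization $t_i\mapsto 1$ returns $R(u,v)$. To read off individual $t$-monomial coefficients I would replace each $\hilb(\Cone_I(M))$ by its $\zzeta$-flipped form (Lemmas~\ref{FlipWorks} and~\ref{FlipDefined}) for a generic $\zzeta$, and then use Corollary~\ref{ExtractCoeff} to equate coefficients. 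Summing the contributions across all bases produces $R(u,v)$ as an explicit sum over triples $(I,P,Q)$ with $I\in M$, $P\subseteq I$, $Q\subseteq[n]\setminus I$, weighted by $u^{|P|}v^{|Q|}$.

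The decisive step is to translate the no-loops, no-coloops hypothesis into cancellation of the off-diagonal terms. A loop $i$ forces $\Poly(M)\subseteq\{x_i=0\}$ and produces a systematic asymmetry between $u$-powers and $v$-powers; as a sanity check, a direct calculation with $M=U_{1,2}\oplus\{\mathrm{loop}\}$ gives $R(u,v)=(1+v)(1-uv)$, whose stray factor $1+v=\beta$ is precisely the loop's contribution and obstructs $R$ from being a polynomial in $uv$. Dually a coloop produces a factor of $1+u=\alpha$. Removing both kills both asymmetries. I expect the cancellation to be realized by a weight-preserving involution on the triples $(I,P,Q)$ (swapping a selected element between $P$ and $Q$ via a basis exchange in~$M$), whose fixed points contribute only diagonal monomials $u^pv^p$; the existence of such an involution should be equivalent to the absence of loops and coloops.

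\textbf{Main obstacle.} The crux is producing this involution, or otherwise extracting the precise combinatorial cancellation from the $\zzeta$-flipped expansion. If a direct combinatorial argument proves stubborn, an alternative route is to use that $y$ is a valuation (Proposition~\ref{prop:y is a valuation}) to reduce to a generating family of matroids (e.g.\ Schubert matroids, via Theorem~\ref{thm:DF}) and verify the identity on that family; but since the no-loops-no-coloops condition is not preserved under polytope subdivision, one must track loop/coloop corrections of the form $(1+u)^a(1+v)^b$ through the valuative reduction.
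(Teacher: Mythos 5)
Your reduction is correct and matches the paper's: by Lemma~\ref{pushtoproj} the theorem is equivalent to showing that $\int y(M)\,[\bw^p S][\bw^q Q^{\vee}]=0$ whenever $p\neq q$ (this is exactly the paper's Lemma~\ref{lem:Zcoeffs}), and you correctly set up the equivariant localization expansion and invoke Corollary~\ref{ConvexHull} to see that the result is a Laurent polynomial in~$t$ supported on exponents in $\{-1,0,1\}^n$. The examples with a loop and a coloop are also a good sanity check.

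However, the decisive step --- the one you flag as the ``main obstacle'' --- is genuinely missing, and the route you sketch does not close the gap. You propose a weight-preserving involution on triples $(I,P,Q)$ whose existence ``should be'' equivalent to the absence of loops and coloops, but no such involution is constructed, and it is not at all clear that one exists (an involution swapping an element between $P$ and $Q$ would move $u^{|P|}v^{|Q|}$ to $u^{|P|\mp1}v^{|Q|\pm1}$, so it is not weight-preserving, and it would not fix the Hilbert-series weight $\hilb(\Cone_I(M))$ either). Your fallback via valuativity would require tracking loop/coloop corrections through a subdivision into Schubert matroids, which you note but also do not carry out. The paper's actual argument is simpler and sidesteps involutions entirely: for each coordinate $i$, take $\zeta_1=e_i$ and apply Lemma~\ref{ZetaVanishing} to the flipped expansion. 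If $i\notin I$ then $e_Q-e_P$ has nonnegative $i$-th coordinate and $\One(\Cone_I(M))^{\zzeta}$ lies in $\{x_i\geq0\}$; if $i\in I$ then the no-coloop hypothesis forces a ray of $\Cone_I(M)$ with negative $i$-th coordinate, so $\One(\Cone_I(M))^{\zzeta}$ lies in the \emph{open} half-space $\{x_i>0\}$, compensating for the possible $-1$ from $e_Q-e_P$. Either way no $t^a$ with $a_i<0$ survives; the symmetric (no-loop) argument kills $a_i>0$. Hence the only surviving monomial is~$t^0$, so the expression lies in~$\ZZ$; since it is homogeneous of $t$-degree $q-p$, it vanishes when $p\neq q$. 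You should replace the speculative involution with this positive/negative half-space argument.
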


Because $(\alpha+\beta - \alpha \beta)^n=0$, there is more than one polynomial which obeys this condition. 
We make $H_M$ unique by defining it to have degree $<n$.

The heart of our proof is the following lemma:

\begin{lemma}\label{lem:Zcoeffs}
In the setup of Theorem~\ref{DiagonalPolynomial},
$\int^T y(M) \eqv{[\bw^p S]} \eqv{[\bw^q (Q^{\vee})]}\in\ZZ$ for any $p$ and~$q$, and equals $0$ when $p\neq q$.  
\end{lemma}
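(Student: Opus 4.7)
Plan.  The strategy is to apply equivariant localization and then analyze the resulting sum of Hilbert series using the flipping tools of Section~\ref{sec:flipping}, in close parallel to the proof of Theorem~\ref{thm:RankCorankGF}.

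First, localize the integral via~\eqref{PushToPoint}.  Using the character descriptions $\eqv{[\bw^p S]}(x_I)=e_p(t_i^{-1})_{i\in I}$ and $\eqv{[\bw^q Q^\vee]}(x_I)=e_q(t_j)_{j\in J}$ (with $J:=[n]\setminus I$), and the fact that the factor $\prod_{i\in I,j\in J}(1-t_i^{-1}t_j)$ in $y(M)(x_I)$ cancels the denominators coming from the pushforward formula, one obtains
$$F_{p,q}(t)\;:=\;\int^T y(M)\,\eqv{[\bw^p S]}\,\eqv{[\bw^q Q^\vee]}
\;=\;\sum_{\substack{I\in M,\;P\subseteq I,\;Q\subseteq J\\|P|=p,\;|Q|=q}}\hilb\bigl(\Cone_I(M)+e_Q-e_P\bigr),$$
after expanding $e_p(t^{-1}|_I)e_q(t|_J)=\sum_{P,Q}t^{e_Q-e_P}$ and recognizing multiplication by $t^{e_Q-e_P}$ as translation of the underlying cone.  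A priori this is a sum of rational functions, but since it lies in $\KTP$ it is a Laurent polynomial.

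Second, the cone vertices $e_Q-e_P$ all satisfy $\sum_k x_k = q-p$, so by Corollary~\ref{ConvexHull} the Newton polytope of $F_{p,q}$ is contained in $\conv\{e_Q-e_P\}\subseteq\{\sum_k x_k = q-p\}\cap[-1,1]^n$.  In particular, if $p\neq q$ the origin is excluded from this hyperplane and so $F_{p,q}$ has vanishing constant term; this already disposes of $v=0$ in the mismatched case.

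Third, extract coefficients using Corollary~\ref{ExtractCoeff} and Lemma~\ref{FlipWorks}: for any generic $\zzeta$,
$$[t^v]F_{p,q}\;=\;\sum_{I,P,Q}\One(\Cone_I(M))^{\zzeta}\bigl(v-e_Q+e_P\bigr).$$
Mimicking the proof of Theorem~\ref{thm:RankCorankGF}, for a fixed target $v$ choose $\zeta_1$ so that $\zeta_1$ is minimized on the Newton polytope uniquely at $v$.  Lemma~\ref{ZetaVanishing} then kills every term except those for which $e_Q-e_P=v$ and $I$ is the $\zeta_1$-minimum basis $I_0$ of $M$, reducing the coefficient to the indicator of $\{P\subseteq I_0\text{ and }Q\cap I_0=\emptyset\}$.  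Here the hypothesis that $M$ has no loops or coloops is crucial: greedy optimization against $\zeta_1$ forces the $\zeta_1$-smallest element of $[n]$ into $I_0$ and the $\zeta_1$-largest element out of it.  The former element lies in $Q$ whenever $Q\neq\emptyset$, and the latter in $P$ whenever $P\neq\emptyset$, so the indicator vanishes for every vertex $v\neq 0$.

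The main obstacle is handling $v$ in the \emph{interior} of the Newton polytope, where no choice of $\zeta_1$ picks $v$ out uniquely.  For these, the argument must be supplemented either by a sharper bound on the Newton polytope of $F_{p,q}$ (so that the only surviving monomial is $t^0$ when $p=q$, and none when $p\neq q$), or by a cancellation of the remaining contributions obtained by pairing bases $I,I'$ related by basis exchange across an edge of $\Poly(M)$.  Once all off-origin coefficients are shown to vanish, $F_{p,q}$ reduces to its $v=0$ term: zero when $p\neq q$, and the desired integer when $p=q$.
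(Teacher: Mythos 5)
Your plan follows the paper's general strategy (localize via equation~\eqref{PushToPoint}, rewrite $y(M)(x_I)$ times the character as $\hilb(\Cone_I(M))\sum_{P,Q}t^{e_Q-e_P}$, bound the support using Corollary~\ref{ConvexHull}, and then flip cones à la Section~\ref{sec:flipping}), but you hit and explicitly acknowledge a gap that the paper's proof is specifically designed to avoid. The difficulty is your choice of $\zzeta$: by trying to mimic the proof of Theorem~\ref{thm:RankCorankGF}, you pick $\zeta_1$ to isolate a single target exponent $v$, which only works when $v$ is a vertex of the Newton polytope. For interior $v$, you admit you have no argument, and neither of the suggested patches (``sharper bound on the Newton polytope'' or ``cancellation by pairing bases across an edge'') is carried out, so the proof as written is incomplete. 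Your subsidiary reasoning (degree homogeneity forces the constant term to vanish when $p\neq q$; the greedy characterization of the $\zeta_1$-minimal basis and loop/coloop-freeness) is correct as far as it goes, and the first of these is used in the paper's proof too; but without killing the interior coefficients the lemma is not established.

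The paper sidesteps this by not attempting to isolate exponents at all. Rather than choosing $\zeta_1$ adapted to a candidate monomial, it chooses $\zeta_1 = e_i$ \emph{for each coordinate index} $i$, and applies the same flipping machinery. One then shows: if $i\notin I$ then $i\notin P$, so $(e_Q-e_P)_i\geq 0$, and by Lemma~\ref{ZetaVanishing} the flip $\One(\Cone_I(M))^{\zzeta}$ lives in $\{x_i\geq 0\}$, so the translated cone contributes no exponent with $a_i<0$; if $i\in I$ then, since $i$ is not a coloop, $\Cone_I(M)$ has a ray with negative $i$-th coordinate, so by the second half of Lemma~\ref{ZetaVanishing} the flip lives in the \emph{open} half-space $\{x_i>0\}$, which compensates for the possibility $(e_Q-e_P)_i=-1$. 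Taking $\zeta_1=-e_i$ and using loop-freeness gives the symmetric bound $a_i\leq 0$. Thus every surviving exponent has $a_i=0$ for all $i$, i.e.\ the sum is constant. This is precisely the ``sharper bound'' you gestured at in your final paragraph: the Newton polytope is $\{0\}$, and it is obtained with the same tools you already have in hand, just applied coordinate-by-coordinate rather than vertex-by-vertex. The $p\neq q$ case then follows from the degree argument you already gave.
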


\begin{proof}[Proof of Theorem~\ref{DiagonalPolynomial} from Lemma~\ref{lem:Zcoeffs}]

Suppose that $(\pi_{1(n-1)})_* \pi_{d}^* \left( y(M) \right)=F(\alpha, \beta)$. 
To say that $F$ is a polynomial in $\alpha+\beta-\alpha \beta$ is the same as to say that it is a polynomial in
$1-\alpha+\beta-\alpha \beta=(\alpha-1)(\beta-1)$. So, by Lemma~\ref{pushtoproj}, it is equivalent to show that $\int y(M) \sum [\bw^p S] [\bw^q (Q^{\vee})] u^p v^q$ is a polynomial in $uv$. 
By Lemma~\ref{lem:Zcoeffs}, the coefficient of $u^p v^q$ is zero whenever $p \neq q$, so this sum is a polynomial in $uv$.
\end{proof}

As in the proof of Theorem~\ref{TutteTheoremProved}, the proof of Lemma~\ref{lem:Zcoeffs} will be by equivariant localization.

\begin{proof}
Fix $p$ and~$q$. 
For any $I \in \binom{[n]}{d}$, we have 
$$\eqv{[\bw^p S]} \eqv{[\bw^q (Q^{\vee})]}(I) = e_p(t_i^{-1})_{i \in I} e_q(t_j)_{j \in [n] \setminus I}$$
where $e_k$ is the $k$\/th elementary symmetric function.  
So
\begin{equation}
\int y(M) \eqv{[\bw^p S]} \eqv{[\bw^q (Q^{\vee})]} =
\sum_{I \in M} \hilb(\Cone_I(M)) \sum_{P \in \binom{I}{p}} t^{-e_P} \sum_{Q \in \binom{[n] \setminus I}{q}} t^{e_Q}. \label{VanishingExpanded}
\end{equation}
The reader may wish to consult example~\ref{SquarePyramid3} at this time.

By Corollary~\ref{ConvexHull}, $t^a$ may only appear with nonzero coefficient if $a$ is in the convex hull of $\{ e_P - e_Q \}$ where $P$ and $Q$ are as above. In particular, every coordinate of $a$ must be $-1$, $0$ or $1$. We will now establish that, in fact, every coordinate must be zero.

Consider any index $i$ in $[n]$.
Let $\zeta_1 = e_i$ and complete $\zeta_1$ to a basis $\zzeta$ of~$\RR^n$.
We abbreviate the half space $\{ x : x_i \geq 0 \}$ by $H$, and $\{ x : x_i > 0 \}$ by $H_{+}$.

The sum in~(\ref{VanishingExpanded}) is equal to
\begin{equation}
\sum_{I \in M} \hilb( \One(\Cone_{I}(M))^{\zzeta} ) \sum_{P \in \binom{I}{p}} \sum_{Q \in \binom{[n] \setminus I}{q}} t^{e_Q-e_P}. \label{VanishingExpandedFlipped}
\end{equation}
By Corollary~\ref{ExtractCoeff}, it is legitimate to extract the coefficient of a particular term. 

Let $I\in M$, and suppose that $i \not \in I$. Then $i$ cannot be in $P$, so the $i$-th coordinate in $e_Q-e_P$ is nonnegative. Also, by Lemma~\ref{ZetaVanishing}, $\One(\Cone_I(M))^{\zzeta}$ is supported in $H$. So such $t^{e_Q - e_P} \hilb( \One(\Cone_I(M))^{\zzeta} )$ cannot contribute any monomial of the form $t^a$ with $a_i<0$.

Now, suppose that $i \in I$. 
Since $i$ is not a coloop of $M$, the cone $\Cone_I(M)$ has a ray with negative $i$-th coordinate.
So, by Lemma~\ref{ZetaVanishing}, $\One(\Cone_I(M))^{\zzeta}$ lies in the open halfplane $H_{+}$.
In particular, if $\One(\Cone_I(M))^{\zzeta}(a)$ is nonzero for some lattice point $a$ then $a_i \geq 1$. 
So, again, $t^{e_Q - e_P} \hilb( \One(\Cone_I(M))^{\zzeta} )$ cannot contribute any monomial of the form $t^a$ with $a_i<0$.

A very similar argument shows that no monomial with any positive exponent can occur in~(\ref{VanishingExpandedFlipped}).
So the only monomial in~(\ref{VanishingExpandedFlipped}) is $t^0$, i.e.\
(\ref{VanishingExpandedFlipped}) is in~$\ZZ$.
Additionally,~(\ref{VanishingExpandedFlipped}) is homogeneous of degree $q-p$, which is nonzero
if $p\neq q$.
So we deduce that in that case~(\ref{VanishingExpandedFlipped}) is zero, as desired.
\end{proof}

\begin{example} \label{SquarePyramid3}
We compute $H_M$ for the matroid $M$ from example~\ref{SquarePyramid1}.
For brevity, we write
\begin{eqnarray*}
s'_{I} &:=& \sum_{P \subseteq I} \sum_{Q \subseteq J} t^{- e_P + e_Q} u^{|P|} v^{|Q|} = \prod_{i \in I} (1+u t_i^{-1}) \prod_{j \in J} (1+v t_j) \\
h_{I} &:=& \prod_{i \in I} \prod_{j \in J} (1-t_i^{-1} t_j)^{-1}. 
\end{eqnarray*}
We must compute
\begin{multline} \label{BigMess}
s'_{13} h_{13} (1-t_2 t_3^{-1}) + s'_{14} h_{14} (1-t_2 t_4^{-1}) + s'_{23} h_{23} (1-t_1 t_3^{-1})  \\ + s'_{24} h_{24} (1-t_2 t_4^{-1}) + s'_{34} h_{34} (1-t_1 t_2 t_3^{-1} t_4^{-1}). 
\end{multline}
The reader may enjoy typing~(\ref{BigMess}) into a computer algebra system and watching it simplify to $1-uv$.
So $H_M = 1-(1-\alpha)(1-\beta) = \alpha+\beta-\alpha \beta$ and $h_M(s)=s$.
\end{example}

We now show that the polynomial $H_M$ is equal to the polynomial $h_M$ from the 
second author's earlier work~\cite{KT1}.



\begin{remark}
In~\cite{KT1}, two closely related polynomials are introduced, $h_M(s)$ and $g_M(s)$. These obey
$g_M(s) = (-1)^c h_M(-s)$, where $c$ is the number of connected components of $M$.
As discussed in \cite[Section 3]{KT1}, $g_M$ behaves more nicely in combinatorial formulas; 
its coefficients are positive and formulas involving $g_M$ have fewer signs.
However, $h_M$ is more directly related to algebraic geometry.
The fact that $h_M$ arises more directly in the present paper is another indication of this.
\end{remark}

We review some the definition of $h_M$.
Let $i$ be an index between $1$ and $d$. 
Choose a line $\ell$ in $n$-space and an $n-i$ plane $M$ containing $\ell$. 
Let $\Omega_i \subset G(d,n)$ be the Schubert cell of those $d$-planes $L$ such that $\ell \subset L$ and $L+M$ is contained in a hyperplane.
If $i > d$, we define $\Omega_i$ to be $\Omega_d$.
Then $h_M(s)$ was defined by
$$\frac{h_M(s)}{1-s} = \sum_{i=1}^{\infty} \int_{G(d,n)} y(M) [\mathcal{O}_{\Omega_i}] s^i.$$ 
In other words, the coefficient of $s^i$ in $h_M(s)$ is 
$$\int_{G(d,n)} y(M) \left( [\mathcal{O}_{\Omega_i}] - [\mathcal{O}_{\Omega_{i-1}}] \right).$$

\begin{theorem} \label{OldNew}
With the above definitions, we have $H_M(s) = h_M(s)$.
\end{theorem}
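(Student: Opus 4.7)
By the arguments behind Theorem~\ref{DiagonalPolynomial}, the polynomial $H_M$ admits the explicit expansion
$$H_M(s)=\sum_{p\geq 0} r_p\,(1-s)^p,\qquad r_p:=\int_{G(d,n)} y(M)\,[\bw^p S]\,[\bw^p Q^\vee]\in\ZZ.$$
Indeed, specializing Lemma~\ref{pushtoproj} to $[E]=y(M)$ produces the polynomial $R(u,v)=\sum_{p,q}\bigl(\int y(M)[\bw^p S][\bw^q Q^\vee]\bigr)u^pv^q$, Lemma~\ref{lem:Zcoeffs} collapses it to $R(u,v)=\sum_p r_p(uv)^p$, and $(\alpha-1)(\beta-1)=1-(\alpha+\beta-\alpha\beta)$ finishes the identification. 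Equivalently, as a formal power series, $H_M(s)/(1-s)=\sum_p r_p(1-s)^{p-1}$, so to prove $H_M=h_M$ it suffices to show
$$\int y(M)\,[\OO_{\Omega_i}]=[s^i]\sum_{p\geq 0} r_p\,(1-s)^{p-1}\qquad\text{for every $i\geq 1$.}$$

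The plan is to route the classes $[\OO_{\Omega_i}]$ through the push-pull diagram~\eqref{diag1}. Fix a partial flag $\ell_0\subset M_0^{(i)}\subset\CC^n$ with $\dim\ell_0=1$ and $\dim M_0^{(i)}=n-i$, and set $V_i:=\pi_{1(n-1)}^{-1}(Z_i)\subset\Fl(1,d,n-1;n)$, where $Z_i:=\{\ell_0\}\times\{H:M_0^{(i)}\subset H\}$ is a $\PP^{i-1}$ sitting inside the incidence subvariety $\Fl(1,n-1;n)\hookrightarrow\PP^{n-1}\times\PP^{n-1}$. Then $\pi_d:V_i\to\Omega_i$ is a birational resolution, and standard vanishing of higher direct images for such resolutions of Schubert varieties in Grassmannians gives $\pi_{d*}[\OO_{V_i}]=[\OO_{\Omega_i}]$. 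Factoring $\pi_{1(n-1)}$ through the smooth codim-$1$ inclusion $\iota:\Fl(1,n-1;n)\hookrightarrow\PP^{n-1}\times\PP^{n-1}$ and applying the projection formula twice gives
$$\int y(M)\,[\OO_{\Omega_i}]=\int_{\PP^{n-1}\times\PP^{n-1}}H_M(\alpha+\beta-\alpha\beta)\cdot\tilde\eta_i,$$
for any $\tilde\eta_i\in K^0(\PP^{n-1}\times\PP^{n-1})$ satisfying $\iota^*\tilde\eta_i=[\OO_{Z_i}]\in K^0(\Fl(1,n-1;n))$; summing $\sum_i\tilde\eta_is^i$ into a closed form then yields $H_M(s)/(1-s)$ upon integration.

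The hard step is pinning down $\tilde\eta_i$, or equivalently understanding $\iota^*$ well enough to invert it on the needed classes. Because $\iota$ has conormal bundle $\OO(-1,-1)$, the naive candidate $\tilde\eta_i=\alpha^{n-1}\beta^{n-i}$ (the class of $Z_i$ in the ambient $\PP^{n-1}\times\PP^{n-1}$) accumulates a Tor correction governed by the excess intersection formula and does not restrict to $[\OO_{Z_i}]$ on the nose. An alternative, in the spirit of Section~\ref{sec:tutte}, is to bypass the search for $\tilde\eta_i$ entirely and verify the required equality via equivariant localization: compute $[\OO_{\Omega_i}](x_I)$ and $[\bw^p S][\bw^p Q^\vee](x_I)$ at each fixed point $x_I\in G(d,n)^T$ using Theorem~\ref{thm:Hilb}, express both sides as sums of Hilbert series of the cones $\Cone_I(M)$, and reconcile them using the $\zzeta$-flipping machinery of Section~\ref{sec:flipping}.
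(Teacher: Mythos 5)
Your setup is correct and matches the paper's strategy: you express $H_M(s)=\sum_p r_p(1-s)^p$ via Lemmas~\ref{pushtoproj} and~\ref{lem:Zcoeffs}, and you correctly reduce to comparing $\int y(M)[\OO_{\Omega_i}]$ against a coefficient of $H_M/(1-s)$. Your push--pull skeleton through diagram~\eqref{diag1} and the projection formula is also the right idea. However, there is a genuine gap at the step you flag as ``the hard step,'' and it is not as hard as you think: the problem is that you chose the wrong class for $\tilde\eta_i$.

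You took $\tilde\eta_i=\alpha^{n-1}\beta^{n-i}$ because it is the class of $Z_i$ pushed forward into $\PP^{n-1}\times\PP^{n-1}$; you then correctly observe that since $Z_i\subset\Fl(1,n-1;n)$, this class does \emph{not} restrict to $[\OO_{Z_i}]$ (there is an excess-intersection correction by $1-[\OO(-1,-1)]$). But you want the class that \emph{pulls back} to $[\OO_{Z_i}]$, not the one pushed forward from it, and these differ by a factor of $[\OO_{\Fl}]=\alpha+\beta-\alpha\beta$. The correct choice is $\tilde\eta_i=\alpha^{n-1}\beta^{n-1-i}$: a generic representative is a linear $\PP^0\times\PP^i$ which meets the codimension-one hypersurface $\Fl(1,n-1;n)$ \emph{transversely}, and the intersection is precisely a $\PP^{i-1}$ consisting of pairs $(\ell_0,H)$ with $\ell_0+N\subset H$ for a generic $(n-1-i)$-plane $N$ -- i.e., it is $Z_i$ with $M_0^{(i)}=\ell_0+N$. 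Because the intersection has the expected dimension $i-1$ and the sheaves involved are Cohen--Macaulay, there is no Tor correction, so $\iota^*(\alpha^{n-1}\beta^{n-1-i})=[\OO_{Z_i}]$. (You can sanity-check the index: $\alpha^{n-1}\beta^{n-1-i}\cdot(\alpha+\beta-\alpha\beta)=\alpha^{n-1}\beta^{n-i}$, recovering your pushed-forward class.) The paper then observes that $\pi_{1(n-1)}^*\tilde\eta_i$ is a Schubert structure sheaf in $\Fl(1,d,n-1;n)$ and appeals to the general fact that pushforwards of Schubert structure sheaves along projections of flag varieties are again Schubert structure sheaves, giving $\pi_{d*}\pi_{1(n-1)}^*\alpha^{n-1}\beta^{n-1-i}=[\OO_{\Omega_i}]$ directly, with no need for a separate resolution argument about $V_i$. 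You should fill in this step; as written, your proposal stops at a diagnosed but unsolved difficulty, and your closing suggestion of verifying the identity by direct equivariant localization is plausible but is not carried out (and is not what the paper does).
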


\begin{proof}
We will show that the coefficient of $s^i$ in both cases is the same.
Notice that the coefficient of $s^i$ in $H_M(s)$ will also be the coefficient of $\beta^i$ in $H_M(\alpha+ \beta - \alpha \beta)$. 
As we computed in the proof of Lemma~\ref{pushtoproj}, 
the dual basis to $\alpha^i \beta^j$ is $\alpha^{d-1-i} \beta^{n-d-1-j} (1-\alpha)(1-\beta)$.
In particular, the coefficient of $\beta^i$ in $(\pi_{1(n-1)})_* \pi_{d}^* y(M)$ is $\int \left( (\pi_{1(n-1)})_* \pi_{d}^* y(M) \right) \alpha^{n-1} \beta^{n-1-i} (1-\beta)$.

Now, $\alpha^{n-1}$ intersects the hypersurface $\Fl(1,n-1;n)$ in the set of all pairs $(\mathtt{line}, \mathtt{hyperplane})$ where $\mathtt{line}$ has a given value $\ell$. 
Intersecting further with $\beta^{n-i-1}$ imposes in addition that $\mathtt{hyperplane}$ contain a certain generic $n-i-1$ plane $N$.
But, since the hyperplane is already forced to contain $\ell$, it is equivalent to say that the hyperplane contains the $n-i$ plane $N + \ell$. 
In short, $\alpha^{n-1} \beta^{n-i-1} \cap \Fl(1, n-1; n)$ is represented by the Schubert variety of pairs $(l, H)$ where $l$ is a given line $\ell$ and $H$ contains a given $n-i$ plane $M$ containing $\ell$.

Now, the pushforward of the structure sheaf of a Schubert variety is always the structure sheaf of a Schubert variety.
In the case at hand,
$(\pi_{1(n-1)})_* \pi_{d}^* \alpha^{n-1} \beta^{n-i-1}$ is the class of the Schubert variety of $d$-planes $L$ such that $\ell \subset L$ and $L+M$ is contained in a hyperplane. This is to say, 
$(\pi_{d})_* \pi_{1(n-1)}^* \alpha^{n-1} \beta^{n-i-1} = [ \mathcal{O}_{\Omega_{i}}]$.
Using~(\ref{ProjFormula}), we see that the coefficient of $s^i$ in $H_M(s)$ is 
\begin{multline*}
\int_{\PP^{n-1} \times \PP^{n-1}} \left( (\pi_{1(n-1)})_* \pi_{d}^* y(M) \right) \alpha^{n-1} \beta^{n-1-i} (1-\beta) = \\ \int_{G(d,n)} y(M) \left( [\mathcal{O}_{\Omega_i}] - [\mathcal{O}_{\Omega_{i-1}}] \right), \end{multline*}
as desired.
\end{proof}

\section{Geometric interpretations of matroid operations}\label{sec:matroidops}
In~\cite{KT1}, a number of facts about the behavior of~$h_M$ under standard matroid
operations were proved geometrically.  In this section we re-establish
these using our algebraic tools of localization and Lemma~\ref{pushtoproj}.
Following the established pattern, our proofs will be equivariant.
We first introduce slightly more general polynomials for which our results hold

Define $F^{m}_M(u,v)$ to be the unique polynomial, of degree $\leq n$ in $u$ and $v$, such that
\begin{equation}\label{eq:arbitrary twist}
F^{m}_M(\mathcal{O}(1,0), \mathcal{O}(0,1)) = (\pi_{1(n-1)})_* \pi_{d}^* \left( [\mathcal{O}(m)] y(M) \right).
\end{equation}
We also define an equivariant generalization of this by
$$F^{m,T}_M(u,v) := \int y(M)\eqv{[\mathcal O(m)]}\sum_{p,q}\eqv{[\bw^pS]}\eqv{[\bw^q(Q^\vee)]}u^pv^q$$
In the previous sections, we checked that $F^{0,T}_M(u,v) = h_M(1-uv)$, that $F^{1,T}_M(u,v)$ and $F^{1}_M(u,v)$ are the weighted and unweighted rank generating functions, and that
$F^{1}_M(u-1, v-1)$ is the Tutte polynomial.
The entire collection of $F^{m,T}_M$ can be seen as a generalization of the
Ehrhart polynomial of $\Poly(M)$. 
Specifically, $F^m_M(0,0) = \# (m \cdot \Poly(M) \cap \ZZ^n)$ for $m\geq0$.

Write $M^\ast$ for the matroid dual to~$M$.

\begin{prop}\label{prop:dual}
We have $F^{m}_M(u,v) = F^{m}_{M^\ast}(v,u)\in\ZZ[u,v]$.
\end{prop}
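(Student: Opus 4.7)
The plan is to prove the stronger equivariant identity
\begin{equation}\label{eq:eqv dual}
F^{m,T}_M(u,v) \;=\; (t_1\cdots t_n)^m \, F^{m,T}_{M^*}(v,u)\bigm|_{t_i\mapsto t_i^{-1}},
\end{equation}
and then specialize all $t_i\mapsto1$ to recover $F^m_M(u,v)=F^m_{M^*}(v,u)$. Since each side of \eqref{eq:eqv dual} is computed by a pushforward to a point, I would prove it by equivariant localization, following the pattern established in Sections~\ref{sec:tutte} and~\ref{sec:h}.

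The first step is to write out both sides via~\eqref{PushToPoint}. At a fixed point $x_I\in G(d,n)^T$, the tangent characters are $t_i t_j^{-1}$ with $i\in I$, $j\notin I$, so the Euler-class denominator $\prod_{i\in I,\,j\notin I}(1-t_i^{-1}t_j)$ exactly cancels the factor built into $y(M)(x_I)$. Together with the standard localizations $\eqv{[\OO(m)]}(x_I)=t^{me_I}$, $\sum_p \eqv{[\bw^p S]}(x_I) u^p = \prod_{i\in I}(1+ut_i^{-1})$, and $\sum_q \eqv{[\bw^q Q^\vee]}(x_I) v^q = \prod_{j\notin I}(1+vt_j)$, this yields
$$F^{m,T}_M(u,v) = \sum_{I\in M} \hilb(\Cone_I(M))\cdot t^{me_I}\prod_{i\in I}(1+ut_i^{-1})\prod_{j\notin I}(1+vt_j),$$
and an entirely analogous expression for $F^{m,T}_{M^*}(v,u)$ indexed by bases $I^*\in M^*$ on $G(n-d,n)$.

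The second step is to apply matroid duality. Since the bases of $M^*$ are the complements of the bases of $M$ and $\Poly(M^*)=\mathbf 1-\Poly(M)$, reindexing by $I^*=[n]\setminus I$ converts the sum for $F^{m,T}_{M^*}(v,u)$ into a sum over $I\in M$ with $\Cone_{[n]\setminus I}(M^*) = -\Cone_I(M)$. Using $\hilb(-C)(t)=\hilb(C)(t^{-1})$, the substitution $t_i\mapsto t_i^{-1}$ then converts $\hilb(\Cone_{[n]\setminus I}(M^*))$ back into $\hilb(\Cone_I(M))$. The same substitution swaps $\prod_{i\in I}(1+ut_i^{-1})\leftrightarrow\prod_{j\notin I}(1+vt_j)$ in the right way to match the exterior-power contributions (this is where the swap $u\leftrightarrow v$ in the statement originates), and converts $t^{me_{[n]\setminus I}}$ into $t^{-me_{[n]\setminus I}}$, leaving a residual factor of $(t_1\cdots t_n)^m = t^{m(e_I + e_{[n]\setminus I})}$ when compared with $t^{me_I}$. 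Identity~\eqref{eq:eqv dual} then follows by summing.

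The only real obstacle is bookkeeping: keeping straight the interaction between the polyhedral duality $C\mapsto -C$, the character inversion $t_i\mapsto t_i^{-1}$, and the swap $I\leftrightarrow [n]\setminus I$, so that the elementary-symmetric-polynomial factors truly exchange their $u$- and $v$-roles. Once this is done, setting all $t_i=1$ makes the twist $(t_1\cdots t_n)^m$ trivial and delivers the proposition. As a sanity check, this matches the geometric picture: the isomorphism $G(d,n)\isom G(n-d,n)$ sending $L\mapsto L^\perp$ carries $y(M)$ to $y(M^*)$ and identifies $S\leftrightarrow Q^\vee$, $Q^\vee\leftrightarrow S$ and $\OO(1)\leftrightarrow\OO(1)$ non-equivariantly, which predicts exactly the swap $u\leftrightarrow v$.
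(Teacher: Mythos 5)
Your proposal is correct and follows essentially the same route as the paper: both prove the stronger equivariant identity $F^{m,T}_M(t)(u,v)=t^{me_{[n]}}F^{m,T}_{M^\ast}(t^{-1})(v,u)$ by equivariant localization, using the reflection $\Poly(M^\ast)=e_{[n]}-\Poly(M)$ (equivalently $\hilb(\Cone_I(M))(t)=\hilb(\Cone_{[n]\setminus I}(M^\ast))(t^{-1})$) and reindexing by $J=[n]\setminus I$. The bookkeeping you flag is exactly what the paper carries out, and your sanity check via $L\mapsto L^\perp$ matches the intent.
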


\begin{proof}
Equivariantly, we will show that
$F^{m,T}_M(t) (u,v) = t^{me_{[n]}} F^{m,T}_{M^\ast}(t^{-1}) (v,u)$.
(The symbol $F^{m,T}(t^{-1})$ means that we are to take the coefficients of $F^{m,T}$, which are in $\ZZ[\Char(T)]$, and apply the linear map which inverts each character of $T$.)

We must show that for any $p$ and $q$,
\begin{multline}\label{eq:DualMatroid}
\left(\int^T y(M)\eqv{[\mathcal O(m)]}\eqv{[\bw^pS]}\eqv{[\bw^q(Q^\vee)]}\right)\!(t) 
\\= t^{me_{[n]}}\left(\int^T y(M^\ast)\eqv{[\mathcal O(m)]}\eqv{[\bw^qS]}\eqv{[\bw^p(Q^\vee)]}\right)\!(t^{-1}).
\end{multline}
By localization, the left side is
$$\sum_{I\in M} \hilb(\Cone_I(M))(t)\ t^{me_I} 
\sum_{P\in\binom Ip}\sum_{Q\in\binom{[n]\setminus I}q} t^{e_Q-e_P}.$$
The polytope $\Poly(M^\ast)$ is the image of $\Poly(M)$ under the reflection
$x\mapsto e_{[n]}-x$.
So $\hilb(\Cone_I(M))(t) = \hilb(\Cone_{[n]\setminus I}(M^\ast))(t^{-1})$.  
Therefore the left side of~\eqref{eq:DualMatroid},
reindexing the sum by $J=[n]\setminus I$, is
\begin{align*}
&\mathrel{\phantom{=}}
\sum_{J\in M^\ast} \hilb(\Cone_J(M^\ast))(t^{-1})\ t^{me_{[n]\setminus J}}
\sum_{P\in\binom {[n]\setminus J}p}\sum_{Q\in\binom Jq} t^{e_Q-e_P}
\\&=t^{me_{[n]}}\sum_{J\in M^\ast} \hilb(\Cone_J(M^\ast))(t^{-1})\ t^{-e_J}
\sum_{Q\in\binom Jq}\sum_{P\in\binom {[n]\setminus J}p} t^{-e_P+e_Q}
\end{align*}
which is the right side of~\eqref{eq:DualMatroid}.
\end{proof}

Given matroids $M$ and $M'$, we denote their direct sum by~$M\oplus M'$.

\begin{prop}\label{prop:DirectSum}
We have $F^{m}_{M}F^{m}_{M'} = F^{m}_{M\oplus M'}.$
\end{prop}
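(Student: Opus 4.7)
The plan is to prove the equivariant refinement
\[F^{m,T\times T'}_{M\oplus M'}(u,v) = F^{m,T}_M(u,v)\cdot F^{m,T'}_{M'}(u,v)\]
by equivariant localization, and then specialize all torus characters to $1$, invoking Lemma~\ref{pushtoproj} together with the uniqueness of $F^m_M$ to descend to the non-equivariant statement.  Regard $M\oplus M'$ as a matroid on $[n]\sqcup[n']$; let $T=(\CC^*)^n$ and $T'=(\CC^*)^{n'}$, with characters $t_1,\ldots,t_n$ and $t'_1,\ldots,t'_{n'}$, act on $G(d+d',n+n')$ via the obvious embedding of tori.  The fixed points contributing to $y(M\oplus M')$ are exactly the $x_{I\sqcup I'}$ for $I\in M$ and $I'\in M'$, since these index the bases of $M\oplus M'$.

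The geometric input is that $\Poly(M\oplus M')=\Poly(M)\times\Poly(M')$, so each vertex cone factors as $\Cone_{I\sqcup I'}(M\oplus M')=\Cone_I(M)\times\Cone_{I'}(M')$ inside $\RR^n\times\RR^{n'}$, and hence $\hilb(\Cone_{I\sqcup I'}(M\oplus M'))(t,t')=\hilb(\Cone_I(M))(t)\cdot\hilb(\Cone_{I'}(M'))(t')$.  By definition of $y$, the value $y(M\oplus M')(x_{I\sqcup I'})$ is this Hilbert series times the product $\prod_{k\in I\sqcup I'}\prod_{\ell\not\in I\sqcup I'}(1-\chi_k^{-1}\chi_\ell)$, where $\chi$ is $t$ or $t'$ according to which side of the ground set the index lies.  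This decomposes into a within-$M$ block (which together with the Hilbert series assembles $y(M)(x_I)$), a within-$M'$ block, and two blocks of \emph{cross factors} $(1-t_i^{-1}t'_{j'})$ and $(1-(t'_{i'})^{-1}t_j)$.  When we apply the push-to-a-point formula~\eqref{PushToPoint}, the tangent characters at $x_{I\sqcup I'}$ include cross characters $t_i(t'_{j'})^{-1}$ and $t'_{i'}t_j^{-1}$, whose denominator contributions cancel exactly the cross factors in the numerator.

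The remaining ingredients of the integrand split multiplicatively at $x_{I\sqcup I'}$: $\eqv{[\mathcal O(m)]}=t^{me_I}(t')^{me_{I'}}$; $\sum_p u^p\eqv{[\bw^pS]}=\prod_{i\in I}(1+ut_i^{-1})\prod_{i'\in I'}(1+u(t'_{i'})^{-1})$; and the analogous product for $\sum_q v^q\eqv{[\bw^q Q^\vee]}$.  After cross-cancellation, the localization sum factors across $(I,I')\in M\times M'$ into a product of an $M$-sum depending only on $t$ and an $M'$-sum depending only on $t'$, which are by definition $F^{m,T}_M(u,v)$ and $F^{m,T'}_{M'}(u,v)$.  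The main obstacle is the bookkeeping of this cross-cancellation, but it comes out cleanly: every cross factor appearing in $y(M\oplus M')(x_{I\sqcup I'})$ (a shadow of the Cartesian-product structure of the matroid polytope) pairs perfectly with a cross tangent character at $x_{I\sqcup I'}$ (a shadow of how the product $G(d,n)\times G(d',n')$ sits inside $G(d+d',n+n')$ with extra tangent directions), so the equivariant identity drops out and specializes as required.
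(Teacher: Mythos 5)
Your proposal is correct and takes essentially the same route as the paper: both establish the equivariant identity $F^{m,T\times T'}_{M\oplus M'} = F^{m,T}_M\cdot F^{m,T'}_{M'}$ by localization, the key geometric input being $\Poly(M\oplus M')=\Poly(M)\times\Poly(M')$ and hence $\Cone_{I\sqcup I'}(M\oplus M')=\Cone_I(M)\times\Cone_{I'}(M')$, after which one specializes all characters to~$1$. The only difference is presentational: the paper goes straight to the Hilbert-series form of the pushforward (equations~\eqref{PushToPoint} and~\eqref{eq:direct sum loc}), so the tangent-character denominators are already absorbed into $\hilb(\Cone_I(M))$ and the factorization is immediate, whereas you unwind $y(M\oplus M')(x_{I\sqcup I'})$ and the explicit denominators of~\eqref{PushForward} and exhibit the cancellation of the cross factors $(1-t_i^{-1}t'_{j'})$ and $(1-(t'_{i'})^{-1}t_j)$ by hand.
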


\begin{proof}
Localization gives
\begin{equation}\label{eq:direct sum loc}
F^{m}_{M} = \sum_{I\in M}\hilb(\Cone_I(M))\ t^{me_I} 
\sum_{P\subseteq I}\sum_{Q\subseteq\Elts\setminus I} t^{e_Q-e_P}u^{|P|}v^{|Q|}
\end{equation}
and analogous expansions for $M'$ and $M\oplus M'$.
Since $\Poly(M\oplus M') = \Poly(M)\times\Poly(M')$, we have
$$\hilb(\Cone_I(M))\hilb(\Cone_{I'}(M')) = \hilb(\Cone_{I\cup I'}(M\oplus M')).$$
The proposition follows immediately by multiplying out expansions like~\eqref{eq:direct sum loc}.
\end{proof}

For $k=1,2$, let $M_k$ be a matroid on the ground set $\Elts_k$
and let $i_k\in\Elts_k$.  
Consider the larger ground set $\Elts=\Elts_1\sqcup\Elts_2\setminus\{i_1,i_2\}\cup\{i\}$, 
where $i$ should be regarded as the identification of $i_1$ and~$i_2$.
There are three standard matroid operations one can perform in this setting.
In the next definitions, $I_1$ and~$I_2$ range over elements of~$M_1$ and~$M_2$ respectively.
The series connection $M_{\rm ser}$ of $M_1$ and~$M_2$ is the matroid
\begin{align*}
&\{I_1\sqcup I_2: |(I_1\sqcup I_2)\cap\{i_1,i_2\}|=0\}\\
\mbox{}\cup\mbox{}&\{( I_1\sqcup I_2) \setminus\{i_1,i_2\}\cup\{i\}: |(I_1\sqcup I_2)\cap\{i_1,i_2\}|=1\}
\end{align*}
on~$\Elts$; their parallel connection $M_{\rm par}$ is the matroid
\begin{align*}
&\{(I_1\sqcup I_2) \setminus\{i_1,i_2\}: |(I_1\sqcup I_2)\cap\{i_1,i_2\}|=1\}\\
\mbox{}\cup\mbox{}&\{(I_1\sqcup I_2) \setminus\{i_1,i_2\}\cup\{i\}: |(I_1\sqcup I_2)\cap\{i_1,i_2\}|=2\}
\end{align*}
on~$\Elts$; and their two-sum $M_{\rm 2sum}$ is the matroid 
$$\{(I_1\sqcup I_2) \setminus\{i_1,i_2\}: |(I_1\sqcup I_2)\cap\{i_1,i_2\}|=1\}$$
on~$\Elts\setminus\{i\}$. 

The next property has the nicest form for the particular case of~$F^0_M$, 
on account of Lemma~\ref{lem:Zcoeffs}.

\begin{theorem}\label{thm:two-sum}
We have 
$$F^{m}_{M_1\oplus M_2} = (1+v)F^{m}_{M_{\rm ser}}
+ (1+u)F^{m}_{M_{\rm par}}
- (1+v)(1+u)F^{m}_{M_{\rm 2sum}}.$$
In particular, $F^0_{M_{\rm 2sum}} = F^0_{M_{\rm ser}} = F^0_{M_{\rm par}} = F^0_{M_1\oplus M_2}/(1-uv)$.
\end{theorem}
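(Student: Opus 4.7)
The proof will proceed by equivariant localization, following the pattern of Propositions~\ref{prop:dual} and~\ref{prop:DirectSum}. The central idea is to partition the bases of $M_1\oplus M_2$ into four classes $A_{00}, A_{10}, A_{01}, A_{11}$ according to how $I=I_1\sqcup I_2$ meets $\{i_1,i_2\}$, and match each class with the appropriate bases of $M_{\rm ser}$, $M_{\rm par}$, and $M_{\rm 2sum}$ via the natural bijections inherent in the series, parallel, and two-sum definitions: $A_{00}$ bijects with bases of $M_{\rm ser}$ avoiding $i$; $A_{10}\cup A_{01}$ bijects with bases of $M_{\rm ser}$ containing $i$, with bases of $M_{\rm par}$ avoiding $i$, and with bases of $M_{\rm 2sum}$; and $A_{11}$ bijects with bases of $M_{\rm par}$ containing $i$.

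I would then apply the localization formula
\begin{equation*}
F^{m,T}_M = \sum_{I\in M}\hilb(\Cone_I(M))\,t^{me_I}\prod_{i\in I}(1+ut_i^{-1})\prod_{j\notin I}(1+vt_j)
\end{equation*}
to each matroid and factor out the contributions from $\{i_1,i_2\}$ or $\{i\}$. For $I\in A_{xx}$ the class-specific factor is a product of $(1+ut_{i_k}^{-1})$ and $(1+vt_{i_k})$ terms determined by whether each $i_k$ lies in $I$; analogously the $M_{\rm ser}$ and $M_{\rm par}$ bases contribute a factor $(1+vt_i)$ or $(1+ut_i^{-1})$ depending on whether $i$ is in the basis. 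At the specialization $t=1$, these factors produce exactly the prefactors $(1+v)$, $(1+u)$, and $(1+u)(1+v)$ appearing on the RHS; the inclusion-exclusion with the $M_{\rm 2sum}$ term precisely removes the double counting from the three-way correspondence on $A_{10}\cup A_{01}$.

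The key technical step is verifying, in each class, a Hilbert series identity relating $\hilb(\Cone_I(M_1\oplus M_2)) = \hilb(\Cone_{I_1}(M_1))\cdot\hilb(\Cone_{I_2}(M_2))$ to the Hilbert series of the analogous tangent cones in $M_{\rm ser}$, $M_{\rm par}$, and $M_{\rm 2sum}$. These Hilbert series admit explicit descriptions from the basis-exchange structure: for example, a ray $e_j-e_i$ of $\Cone_{I'}(M_{\rm ser})$ comes from an exchange $i_1\leftrightarrow j$ in $M_1$ or $i_2\leftrightarrow j$ in $M_2$, depending on where $j$ lies, which is essentially the image of a ray of $\Cone_{I_1\sqcup I_2}(M_1\oplus M_2)$ under the projection $\pi:\ZZ^{\Elts_1\sqcup\Elts_2}\to\ZZ^{\Elts}$ identifying $e_{i_1},e_{i_2}\mapsto e_i$. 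I would carry out the verification equivariantly with a consistent identification of characters, say $t_i=t_{i_1}$, absorbing the resulting asymmetry into an auxiliary $(1+vt_{i_2})$ or $(1+ut_{i_2}^{-1})$ factor.

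The main obstacle will be handling this pole cancellation carefully: the tangent cones at corresponding vertices live in lattices of different ranks (the $M_1\oplus M_2$ cones in $\ZZ^{\Elts_1\sqcup\Elts_2}$, the $M_{\rm ser},M_{\rm par}$ cones in $\ZZ^\Elts$), so individual Hilbert series are rational functions with poles at $t=1$ and cannot be matched term-by-term in the non-equivariant specialization. One must combine the contributions from all bases in each class before cancellation produces the polynomial identity. Once the general identity is established, the special case $F^0_{M_{\rm 2sum}}=F^0_{M_{\rm ser}}=F^0_{M_{\rm par}}=F^0_{M_1\oplus M_2}/(1-uv)$ follows by combining with Lemma~\ref{lem:Zcoeffs}: that lemma implies $F^0_M$ is a polynomial in $uv$ alone, so comparing coefficients of $u^pv^q$ with $p\neq q$ in the general identity forces the three $F^0$'s to be equal, and the stated formula is then immediate.
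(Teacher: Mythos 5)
Your outline (localization, partitioning bases by how they meet $\{i_1,i_2\}$, matching them to bases of the three target matroids via inclusion--exclusion) is sound, and your observation that the second assertion follows from the first via Lemma~\ref{lem:Zcoeffs} is exactly right. But the key technical step, the cone-level Hilbert series identity, is not proved, and the mechanism you sketch for it is wrong: rays of $\Cone_{I'}(M_{\rm ser})$ are \emph{not} in general images under the projection $p$ of rays of $\Cone_I(M_1\oplus M_2)$. Already for $M_1=M_2=U_{1,2}$ with $\Elts_1 = \{1,i_1\}$, $\Elts_2 = \{2,i_2\}$, and $I=\{i_1,2\}$, the cone $\Cone_I(M_1\oplus M_2)$ has rays $e_1-e_{i_1}$ and $e_{i_2}-e_2$, which project to $e_1-e_i$ and $e_i-e_2$; but the corresponding cone $\Cone_{\{2,i\}}(M_{\rm ser})$ in $U_{2,3}$ has rays $e_1-e_i$ and $e_1-e_2$, the latter not being the image of any ray of $\Cone_I(M_1\oplus M_2)$ (it is a sum of two such images). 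So the Hilbert series do not match ray-by-ray, nor does the discrepancy average out only after summing over bases --- instead, the needed identity is local at each vertex and involves \emph{intersecting} the cone of $\Poly(M_1\oplus M_2)$ with half-spaces, not just projecting it.

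The missing idea is the subdivision of $\Poly(M_1\oplus M_2)$ by the hyperplane $\{x_{i_1}+x_{i_2}=1\}$. Setting $P_{\rm ser}$, $P_{\rm par}$, $P_{\rm 2sum}$ to be the intersections of $\Poly(M_1\oplus M_2)$ with $\{x_{i_1}+x_{i_2}\le 1\}$, $\{x_{i_1}+x_{i_2}\ge 1\}$, $\{x_{i_1}+x_{i_2}= 1\}$ respectively, one gets $\One(\Poly(M_1\oplus M_2))=\One(P_{\rm ser})+\One(P_{\rm par})-\One(P_{\rm 2sum})$, hence a vertex-by-vertex identity of cone Hilbert series by linearity of $\hilb$. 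The projection $p$, restricted to the subtorus $T'$ where $t_{i_1}=t_{i_2}$, sends each $P_L$ isomorphically onto a translate of the corresponding matroid polytope (and this is well defined on Hilbert series because $\Poly(M_1\oplus M_2)$ lies in $\{\sum_{j\in\Elts_1}x_j=d_1\}$, so no edge points in direction $e_{i_1}-e_{i_2}$ and no pole arises at $t_{i_1}=t_{i_2}$). This is what produces the exact cone identity your sketch asserts; the factors $(1+v)$, $(1+u)$, $(1+u)(1+v)$ then fall out of the bookkeeping when an element of $\{i_1,i_2\}$ is dropped from the $P$ or $Q$ sums. Restricting to $T'$ also sidesteps your proposed asymmetric identification $t_i=t_{i_1}$; absorbing the resulting asymmetry into an auxiliary factor as you suggest is not something you can justify without essentially rediscovering this symmetrization anyway.
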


The series, respectively parallel, extension of a matroid $M_1$ along $i_1$ 
is its series, respectively parallel, connection to the uniform matroid $U_{1,2}$.
Two-sum with $U_{1,2}$ leaves $M_1$ unchanged.  Since $H_{U_{1,2}} = 1-uv$,
Proposition~\ref{prop:DirectSum} implies
one of the most characteristic combinatorial properties of~$h$ from~\cite{KT1}.

\begin{cor}
The values of $h_M$, $H_M$ and $F^0_M$ are unchanged by series and parallel extensions.
\end{cor}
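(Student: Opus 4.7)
The plan is to prove an equivariant refinement of the theorem by localization, in the spirit of Theorems~\ref{TutteTheoremProved} and~\ref{DiagonalPolynomial}. Work in equivariant $K$-theory after specializing $t_{i_1}=t_{i_2}=t_i$, which corresponds to the natural projection $\pi\colon\RR^{E_1\sqcup E_2}\to\RR^E$ identifying $e_{i_1}$ with $e_{i_2}$; the non-equivariant theorem will follow by setting all characters to~$1$ using Theorem~\ref{thm:K0TtensorQ}. The key tool is the localization expansion
$$F^{m,T}_M(u,v)=\sum_{I\in M}\hilb(\Cone_I(M))\,t^{me_I}\prod_{k\in I}(1+ut_k^{-1})\prod_{j\notin I}(1+vt_j),$$
derived as in the proof of Theorem~\ref{TutteTheoremProved}.

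The main step is to establish the equivariant identity
$$F^{m,T}_{M_1\oplus M_2}\Big|_{t_{i_1}=t_{i_2}=t_i}=(1+vt_i)F^{m,T}_{M_{\rm ser}}+(1+ut_i^{-1})F^{m,T}_{M_{\rm par}}-(1+ut_i^{-1})(1+vt_i)F^{m,T}_{M_{\rm 2sum}}$$
term by term. The bases $I=I_1\sqcup I_2$ of $M_1\oplus M_2$ partition according to $|I\cap\{i_1,i_2\}|\in\{0,1,2\}$. When $|I\cap\{i_1,i_2\}|=2$ there is a unique corresponding basis of $M_{\rm par}$ containing~$i$; when $|I\cap\{i_1,i_2\}|=0$ a unique corresponding basis of $M_{\rm ser}$ not containing~$i$; and when $|I\cap\{i_1,i_2\}|=1$ the basis $I$ gives rise simultaneously to an $M_{\rm ser}$ basis $I_{\rm ser}$ (containing~$i$), an $M_{\rm par}$ basis $I_{\rm par}$ (not containing~$i$), and an $M_{\rm 2sum}$ basis $I_{\rm 2sum}$.

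The heart of the argument is a polyhedral subdivision. Since $\Cone_I(M_1\oplus M_2)=\Cone_{I_1}(M_1)\times\Cone_{I_2}(M_2)$ lies in the subspace of vectors with zero coordinate-sum in each factor, on which $\pi$ is injective,
$$\hilb(\Cone_I(M_1\oplus M_2))\Big|_{t_{i_1}=t_{i_2}=t_i}=\hilb(\pi(\Cone_I(M_1\oplus M_2))).$$
I claim $\pi(\Cone_I(M_1\oplus M_2))$ equals $\Cone_{I_{\rm par}}(M_{\rm par})$ when $|I\cap\{i_1,i_2\}|=2$, equals $\Cone_I(M_{\rm ser})$ when $|I\cap\{i_1,i_2\}|=0$, and in the remaining cases is the union $\Cone_{I_{\rm ser}}(M_{\rm ser})\cup\Cone_{I_{\rm par}}(M_{\rm par})$ with intersection $\Cone_{I_{\rm 2sum}}(M_{\rm 2sum})$ suitably embedded as a boundary face. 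Verifying this requires a careful analysis showing that the crossing rays $e_i-e_k$ in $\Cone_{I_{\rm ser}}$ and $\Cone_{I_{\rm par}}$ bridging the two original factors are precisely those produced by cutting the product cone along the $M_{\rm 2sum}$ wall; this is the main technical obstacle. Applying the valuation $\hilb$ yields an inclusion-exclusion for Hilbert series, which combined with a direct accounting of how the products $\prod(1+ut_k^{-1})$ and $\prod(1+vt_j)$ change under the substitution (producing exactly the extra $(1+vt_i)$ and $(1+ut_i^{-1})$ factors) proves the equivariant identity above.

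For the corollary, the series (resp.\ parallel) extension of $M_1$ at $i_1$ is by definition the series (resp.\ parallel) connection of $M_1$ with $U_{1,2}$, while $M_1\oplus_2 U_{1,2}\cong M_1$. A direct localization computation gives $F^{0,T}_{U_{1,2}}(u,v)=1-uv$, so Proposition~\ref{prop:DirectSum} yields $F^0_{M_1\oplus U_{1,2}}=(1-uv)F^0_{M_1}$. Substituting $M_2=U_{1,2}$ into the $m=0$ case of the theorem and rearranging gives $(1+u)(F^0_{M_{\rm par}}-F^0_{M_1})=(1+v)(F^0_{M_1}-F^0_{M_{\rm ser}})$; since both sides lie in $\ZZ[uv]$ by Lemma~\ref{lem:Zcoeffs}, a coefficient comparison forces both differences to vanish, establishing the corollary.
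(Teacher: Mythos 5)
Your final paragraph is a correct proof of the corollary and is essentially the paper's argument: identify the series and parallel extension at $i_1$ with the series and parallel connection to $U_{1,2}$, observe $M_1 \oplus_2 U_{1,2} = M_1$, compute $F^0_{U_{1,2}} = 1-uv$, and combine Proposition~\ref{prop:DirectSum} with Theorem~\ref{thm:two-sum}. The one cosmetic difference is that the paper invokes the second statement of Theorem~\ref{thm:two-sum} ($F^0_{M_{\rm ser}} = F^0_{M_{\rm par}} = F^0_{M_{\rm 2sum}} = F^0_{M_1\oplus M_2}/(1-uv)$) directly, while you feed $M_2 = U_{1,2}$ into the first statement and extract the conclusion from the identity $(1+u)(F^0_{M_{\rm par}}-F^0_{M_1})=(1+v)(F^0_{M_1}-F^0_{M_{\rm ser}})$ via the $\ZZ[uv]$-grading given by Lemma~\ref{lem:Zcoeffs}. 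These are not genuinely different: the paper's deduction of its second statement from its first is exactly this parity-in-$u$-versus-$v$ argument.

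The bulk of your write-up (the first three paragraphs) is an attempt to re-prove Theorem~\ref{thm:two-sum} itself, which is unnecessary here since that theorem is available as input to the corollary. Moreover, as you note, the claimed cone identity bridging $\pi(\Cone_I(M_1\oplus M_2))$ with the cones of $M_{\rm ser}$, $M_{\rm par}$, $M_{\rm 2sum}$ is left as the ``main technical obstacle,'' so that part would not stand as a proof on its own. As stated it also glosses over a translation: when $|I\cap\{i_1,i_2\}|=1$, the vertex of $\Cone_{I_{\rm par}}(M_{\rm par})$ is $e_{I_{\rm par}}$ while $p(e_I)=e_{I_{\rm ser}}=e_{I_{\rm par}}+e_i$, so the parallel-connection cone must be translated by $e_i$ before the union makes sense (this is the ``$+e_i$'' in the paper's description of $p(P_{\rm par})$). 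None of this affects the validity of the corollary derivation in your last paragraph.
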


\begin{proof}[Proof of Theorem~\ref{thm:two-sum}]
Let $M_k$ have rank~$d_k$, $d=d_1+d_2$, and $n=|\Elts_1|+|\Elts_2|$.
Let $T=(\CC^\ast)^n$ be the torus acting on~$G(d,n)$.
Our aim is to relate $y(M_1\oplus M_2)\in K^0_T(G(d,n))$ to
$y(M_{\rm ser})$, $y(M_{\rm par})$, and~$y(M_{\rm 2sum})$.
Localization renders the problem one of
relating cones at vertices of certain polytopes.  Define 
\begin{align*}
P_{\rm ser} &= \Poly(M_1\oplus M_2)\cap\{x_{i_1}+x_{i_2}\leq 1\} \\
P_{\rm par} &= \Poly(M_1\oplus M_2)\cap\{x_{i_1}+x_{i_2}\geq 1\} \\
P_{\rm 2sum} &= \Poly(M_1\oplus M_2)\cap\{x_{i_1}+x_{i_2} = 1\}
\end{align*}
Then 
$$\One(\Poly(M_1\oplus M_2))=\One(P_{\rm ser})+\One(P_{\rm par})-\One(P_{\rm 2sum}).$$
(If $P_{\rm ser}$ and~$P_{\rm par}$ have the same dimension as~$\Poly(M_1\oplus M_2)$
they will be the facets of a subdivision, with $P_{\rm 2sum}$ the unique other interior face.)
This implies that, for $I\in\binom nd$,
\begin{multline}\label{eq:2sum1}
\hilb(\Cone_I(M_1\oplus M_2)) \\=
  \hilb(\Cone_{e_I}(P_{\rm ser}))
+ \hilb(\Cone_{e_I}(P_{\rm par}))
- \hilb(\Cone_{e_I}(P_{\rm 2-sum})).
\end{multline}

We'll use $L$ to denote one of the symbols ${\rm ser}$, ${\rm par}$, ${\rm 2sum}$.

Let $p:\RR^{\Elts_1\sqcup\Elts_2}\to\RR^\Elts$ be the linear projection
with $p(e_{i_1})=p(e_{i_2})=e_i$ and $p(e_j)=e_j$ for $j\neq i_1,i_2$, and
let $\iota: \RR^{\Elts \setminus \{ i \}} \to \RR^{\Elts}$ be the inclusion into the $i$-th coordinate hyperplane.
Then
\begin{align*}
p(P_{\rm ser}) &= \Poly(M_{\rm ser})\\
p(P_{\rm par}) &= \Poly(M_{\rm par})+e_i\\ 
p(P_{\rm 2sum}) &= \iota(\Poly(M_{\rm 2sum}))+e_i
\end{align*}
where $+e_i$ denotes a translation.

The polytope $\Poly(M_1\oplus M_2)$ lies in the hyperplane $\{\sum_{j\in \Elts_1} x_j=d_1\}$,
which intersects $\ker p$ transversely, so $p$ is an isomorphism on the polytopes $P_L$.
In particular for any $I\in M_1\oplus M_2$ we have
$\Cone_{p(e_I)}(p(P_L))=p(\Cone_{e_I}(P_L))$.  
Also, if $u$ is a lattice point then $p(u)$ is.
Define $r:K^0_T(\pt)\to K^0_{T'}(\pt)$ to be the restriction from characters of~$T$ to characters
of its codimension~1 subtorus 
$$T'=\{(t_j)_{j\in\Elts_1\sqcup\Elts_2}\in T : t_{i_1} = t_{i_2}\},$$
so that $t^{p(e_I)} = r(t^{e_I})$.  We write $t_i$ for the common restriction
of $t_{i_1}$ and $t_{i_2}$ to~$T'$.
We will also occassionally need a notation for the torus $T''$ which is the projection of $T'$ under forgetting the $i$-th coordinate.

Let $A$ be the subring of $\Frac K^0_T(\pt)$ consisting of rational functions whose denominator is not divisible by $t_{i_1} - t_{i_2}$.
The map $r$ extends to a map $r:A\to\Frac K^0_{T'}(\pt)$.
Because $P_L$ is in the hyperplane $\sum_{j \in \Elts_1} x_j = d_1$, the edges of $P_L$ do not point in direction $e_{i_1} - e_{i_2}$, so $\hilb(\Cone_{e_I}(P_L))$ is in $A$ and we have
\begin{align}\label{eq:2sum2}
     \hilb(\Cone_{p(e_I)}(M_L))
  &= \hilb(\Cone_{p(e_I)}(p(P_L))) \notag
\\&= \hilb(p(\Cone_{e_I}(P_L))) \notag 
\\&= r(\hilb(\Cone_{e_I}(P_L)))\,.
\end{align}

We now embark on the computation of $F^{m}_{M_1\oplus M_2}$ by equivariant localization.  We have
\begin{multline*}
F^{m,T}_{M_1\oplus M_2}(u,v) = 
\\ \sum_{I\in M_1\oplus M_2} \hilb(\Cone_I(M_1\oplus M_2))\; t^{me_I}
\sum_{P\subseteq I}\sum_{Q\subseteq\Elts_1\sqcup\Elts_2\setminus I} t^{e_Q-e_P} u^{|P|}v^{|Q|}.
\end{multline*}
Expanding as dictated by~\eqref{eq:2sum1}, this is
\begin{multline}\label{eq:2sumMain}
F^{m,T}_{M_1\oplus M_2}(u,v) = \sum_{I\in M_1\oplus M_2} 
\Big(\hilb(\Cone_{e_I}(P_{\rm ser}))
    + \hilb(\Cone_{e_I}(P_{\rm par})) \\
    - \hilb(\Cone_{e_I}(P_{\rm 2sum})) \Big) \cdot t^{me_I} 
\sum_{P\subseteq I}\sum_{Q\subseteq\Elts_1\sqcup\Elts_2\setminus I} t^{e_Q-e_P} u^{|P|}v^{|Q|} 
\end{multline}

We will eventually be applying the map $\KTP \to K^0(\pt) = \ZZ$
replacing all characters by~1 to get a nonequivariant result.
This map factors through $r$.  
As explained above, all of the terms in equation~\eqref{eq:2sumMain} lie in the ring $A$, so we may apply $r$ to both sides.

We take the three terms inside the large parentheses in~\eqref{eq:2sumMain} individually.  
The three are similar, and we will only work through the first, involving $P_{\rm ser}$,
in detail.  Temporarily denote this subsum $\Sigma_{\rm ser}$, i.e.\
$$\Sigma_{\rm ser} = \sum_{I\in M_1\oplus M_2} \hilb(\Cone_{e_I}(P_{\rm ser}))\; t^{me_I}
\sum_{P\subseteq I}\sum_{Q\subseteq\Elts_1\sqcup\Elts_2\setminus I} t^{e_Q-e_P} u^{|P|}v^{|Q|}.$$
By~\eqref{eq:2sum2} and the definition of~$r$ we have
$$r(\Sigma_{\rm ser}) = \sum_{I\in M_1\oplus M_2} \hilb(\Cone_{p(e_I)}(M_{\rm ser}))\; t^{p(me_I)}
\sum_{P\subseteq I}\sum_{Q\subseteq\Elts_1\sqcup\Elts_2\setminus I} t^{p(e_Q-e_P)} u^{|P|}v^{|Q|}.$$
For any $I\in M_1\oplus M_2$ such that $p(e_I)\in\Poly(M_{\rm ser})$, 
not both $i_1$ and~$i_2$ are in~$I$, so $p(e_I)=e_J$
for some $I'\subseteq\Elts$, and we have
$$\sum_{P\subseteq I}\sum_{Q\subseteq\Elts_1\sqcup\Elts_2\setminus I} t^{p(e_Q-e_P)} u^{|P|}v^{|Q|}
= (1+vt_i)\sum_{P\subseteq I'}\sum_{Q\subseteq\Elts\setminus I'} t^{e_Q-e_P}u^{|P|}v^{|Q|}$$
where the factor $(1+vt_i)$ comes from dropping one of $i_1$ and~$i_2$ not contained
in~$I$ from the sum over~$Q$.  Therefore
\begin{align*}
r(\Sigma_{\rm ser}) &= (1+vt_i)\sum_{I'\in M_{\rm ser}} \hilb(\Cone_{I'}(M_{\rm ser}))\;t^{me_{I'}}
\sum_{P\subseteq I'}\sum_{Q\subseteq\Elts\setminus I'} t^{e_Q-e_P}u^{|P|}v^{|Q|}
\\&= (1+vt_i) F^{m}_{M_{\rm ser}}(u,v).
\end{align*}

A similar argument for each of the other two summands in~\eqref{eq:2sumMain} yields
\begin{multline}\label{eq:2sumLast}
r \left( F^{m,T}_{M_1\oplus M_2}(u,v) \right) = \\(1+vt_i)F^{m,T'}_{M_{\rm ser}}(u,v)
+ (1+ut_i^{-1})F^{m,T'}_{M_{\rm par}}(u,v)
- (1+vt_i)(1+ut_i^{-1})F^{m,T''}_{M_{\rm 2sum}}(u,v).
\end{multline}
In the last term, we are implictly using the injection $K^{T''}_0(\pt) \into K^{T'}_0(\pt)$ coming from the projection $T \to T''$. 

On passing to non-equivariant $K$-theory, this becomes the first assertion of the theorem.
For the second, Lemma~\ref{lem:Zcoeffs} says that $H_M$ is a polynomial in $uv$
for any matroid $M$.  Thus, putting $m=0$ in~\eqref{eq:2sumLast}, the terms on the
right containing an unmatched $v$ must cancel, implying $F^0_{M_{\rm ser}} = F^0_{M_{\rm 2sum}}$.
The same goes for the terms containing an unmatched $u$,
implying $F^0_{M_{\rm par}} = F^0_{M_{\rm 2sum}}$.  Making these substitutions
and simplifying, \eqref{eq:2sumLast} becomes the second assertion of the theorem.
\end{proof}

\end{document}